\def\im{\mathrm{im}}
\theoremstyle{plain}
\newtheorem{theorem}{Theorem}[section]
\newtheorem{proposition}[theorem]{Proposition}
\newtheorem{notation}[theorem]{Notation}
\newtheorem{fact}[theorem]{Fact}
\newtheorem{fact/def}[theorem]{Fact/Definition}
\newtheorem{lemma}[theorem]{Lemma}
\newtheorem{example}[theorem]{Example}
\newtheorem{corollary}[theorem]{Corollary}
\newtheorem{claim}{Claim}
\theoremstyle{definition}
\newtheorem{definition}[theorem]{Definition}
\newtheorem{remark}[theorem]{Remark}
\newtheorem{question}[theorem]{Question}
\newcommand{\be}{\begin{enumerate}}
\newcommand{\bi}{\begin{itemize}}
\newcommand{\bd}{\begin{definition}}
\newcommand{\bt}{\begin{theorem}}
\newcommand{\bl}{\begin{lemma}}
\newcommand{\bc}{\begin{corollary}}
\newcommand{\bft}{\begin{fact}}
\newcommand{\bp}{\begin{proposition}}
\newcommand{\br}{\begin{remark}}
\newcommand{\er}{\end{remark}}
\newcommand{\ep}{\end{proposition}}
\newcommand{\ef}{\end{fact}}
\newcommand{\ec}{\end{corollary}}
\newcommand{\ee}{\end{enumerate}}
\newcommand{\ei}{\end{itemize}}
\newcommand{\ed}{\end{definition}}
\newcommand{\et}{\end{theorem}}
\newcommand{\el}{\end{lemma}}
\newcommand{\bpf}{\begin{proof}}
\newcommand{\bpfc}{\begin{proof}[Proof of the claim]}
\newcommand{\epf}{\end{proof}}
\def\Ind#1#2{#1\setbox0=\hbox{$#1x$}\kern\wd0\hbox to 0pt{\hss$#1\mid$\hss}
\lower.9\ht0\hbox to 0pt{\hss$#1\smile$\hss}\kern\wd0}
\def\ind{\mathop{\mathpalette\Ind{}}}
\def\Notind#1#2{#1\setbox0=\hbox{$#1x$}\kern\wd0\hbox to 0pt{\mathchardef
\nn="3236\hss$#1\nn$\kern1.4\wd0\hss}\hbox to 0pt{\hss$#1\mid$\hss}\lower.9\ht0
\hbox to 0pt{\hss$#1\smile$\hss}\kern\wd0}
\def\nind{\mathop{\mathpalette\Notind{}}}
\def\lin{\mathrm{lin}}
\def\Dim{\mathrm{Dim}}
\def\Codim{\mathrm{Codim}}
\def\rk{\mathrm{rk}}
\def\acl{\mathrm{acl}}
\def\Mlt{\mathrm{Mlt}}
\def\DM{\mathrm{DM}}
\def\RM{\mathrm{RM}}
\def\dcl{\mathrm{dcl}}
\def\Lin{\mathrm{Lin}}
\def\tp{\mbox{tp}}
\def\id{\mathrm{id}}
\def\tp{\mathrm{tp}}
\def\C{\mathfrak{C}}
\begin{document}
\title{Sets, groups, and fields definable in vector spaces with a bilinear form}
\author{Jan Dobrowolski}
\address{Instytut Matematyczny, Uniwersytetu Wroc\l awskiego, pl.\ Grunwaldzki 2/4, 50-383 Wroc\l aw\newline
\indent {\em and}\newline
\indent School of Mathematics, University of Leeds, Leeds LS2 9JT, UK}
\email{dobrowol@math.uni.wroc.pl}

\keywords{bilinear form, definable group, definable field}
\subjclass[2010]{Primary: 03C60; Secondary: 03C45}
\begin{abstract}We study definable sets, groups, and fields in the theory $T_\infty$ of infinite-dimensional vector spaces over an algebraically closed field of any fixed  characteristic different from 2 equipped with a nondegenerate symmetric (or alternating) bilinear form. First, we define an ($\mathbb{N}\times \mathbb{Z},\leq_{lex}$)-valued dimension on definable sets in $T_\infty$ enjoying many properties of Morley rank in strongly minimal theories. Then, using this dimension notion as the main tool, we prove that all groups definable in $T_\infty$ are (algebraic-by-abelian)-by-algebraic, which, in particular, answers a question from \cite{Gr}. We conclude that every infinite field definable in $T_\infty$ is definably isomorphic to the field of scalars of the vector space.
We derive some other consequences of good behaviour of the dimension in $T_\infty$, e.g. every generic type in any definable set is a definable type; every set is an extension base; every definable group has a definable connected component.

We also consider the theory $T^{RCF}_\infty$ of vector spaces over a real closed field equipped with a nondegenerate alternating bilinear form or a nondegenerate symmetric positive-definite bilinear form. As in the case of $T_\infty$, we define a dimension on sets definable in $T^{RCF}_\infty$, and using it we prove analogous results about definable groups and fields: every group definable in $T^{RCF}_{\infty}$ is (semialgebraic-by-abelian)-by-semialgebraic (in particular, it is (Lie-by-abelian)-by-Lie), and every field definable in $T^{RCF}_{\infty}$ is definable in the field of scalars, hence it is either finite or real closed or algebraically closed.
\end{abstract}
\maketitle
\tableofcontents

\section{Introduction}
There are two kinds of motivation for the study undertaken in this paper.

The first is improving our understanding of definable sets and other definable objects (such as groups and fields) in classical mathematical structures. 
There is a variety of this kind of results in numerous contexts; we mention few of them. 
In algebraically closed fields there is a very well-behaved notion of dimension on definable sets (given by the algebraic dimension of the Zariski closure of a set, which coincides with a more general notion of Morley rank) and the following well-known description of definable groups and fields follows from results by Weil, Hrushovski, and van den Dries (see \cite{Wei},\cite{van1},\cite{van2},\cite{Bou}).
\begin{fact}\label{fact_acf}
Let $K$ be an algebraically closed field. Then:\\
 (1) The groups definable in $K$ are precisely the algebraic groups over $K$.\\
 (2) Every field definable in $K$ is definably isomorphic to $K$.
\end{fact}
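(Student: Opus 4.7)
The plan is to prove (1) via a classical birational--group--chunk argument going back to Weil, and then to deduce (2) from (1) together with a short analysis of multiplication in $F$ as a family of commuting definable endomorphisms of its additive group.

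For (1), the main tool will be Weil's theorem: a pre-group defined by a birational group law on an algebraic variety can be recovered as an algebraic group. Given a definable group $(G,\cdot)$ in $K$, I would first pass to its (Morley-)connected component, so that every definable subset of $G$ has a unique generic type (using that $K$ is strongly minimal, so Morley rank is just algebraic dimension of Zariski closures). After realising $G$ as a constructible subset of some affine space, I would consider two independent generics $g_1,g_2\in G$ and show that the map $(g_1,g_2)\mapsto(g_1,g_1g_2)$ is birational onto its image, using the symmetry between $g_2$ and $g_1g_2$ over $g_1$. This yields a birational group law on the Zariski closure of $G$; Weil's theorem then produces an algebraic group $H$ in which a generic chunk of $G$ embeds. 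A standard argument, covering $G$ by translates of a generic chunk and using that multiplication in $G$ is everywhere defined, promotes this to a definable isomorphism $G\to H$.

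For (2), let $F$ be an infinite field definable in $K$. Applying (1) to $(F,+)$ realises it as a connected commutative algebraic group over $K$. The torsion structure of the additive group of a field (all elements of order $p$ in characteristic $p$, torsion-free in characteristic zero), together with the Chevalley structure theorem, rules out the toric and abelian-variety parts; hence $(F,+)$ is a power of the additive group $\mathbb{G}_a$ and is in particular a finite-dimensional $K$-vector space. Each $a\in F$ acts on $(F,+)$ as a definable group endomorphism, which by (1), applied to its graph, is automatically $K$-linear. This embeds $F$ as a commutative subring of $\mathrm{End}_K(F,+)$, i.e.\ as a finite field extension of $K$, which must coincide with $K$ itself since $K$ is algebraically closed. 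Tracing back through the identifications upgrades the abstract isomorphism $F\cong K$ to a definable one.

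The step I expect to be the main obstacle is the birationality of the translation map in the proof of (1): one has to verify that $(g_1,g_2)\mapsto(g_1,g_1g_2)$ genuinely induces a well-defined dominant rational map with rational inverse, control the singular loci where the would-be group chunk fails, and check that all of this descends to $K$ rather than to a larger algebraically closed base. The bookkeeping needed to promote a birational group to an algebraic group \emph{and} then to identify the definable group $G$ with its $K$-points, without enlarging the parameter set, is the technical heart of the argument, and it is there rather than in any single conceptual leap that the proof really requires work.
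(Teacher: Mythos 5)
The paper does not prove Fact~\ref{fact_acf}; it cites it as classical (Weil, van den Dries, Hrushovski, Bouscaren), so there is no in-paper argument to compare against. Your outline for part~(1) follows exactly the cited route --- a generic birational group law plus Weil's group chunk theorem plus the standard covering-by-translates argument to upgrade the chunk to an isomorphism with an algebraic group --- and your closing remark correctly identifies where the technical burden lies.

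Your outline for part~(2), however, has a genuine gap in positive characteristic, at the step ``Each $a\in F$ acts on $(F,+)$ as a definable group endomorphism, which by (1), applied to its graph, is automatically $K$-linear.'' Applying (1) to the graph of $m_a$ shows only that the graph is an algebraic subgroup of $(F,+)\times(F,+)\cong\mathbb{G}_a^n\times\mathbb{G}_a^n$, so $m_a$ is given by \emph{additive} polynomials. In characteristic $p$ these are $p$-polynomials $\sum c_i x^{p^i}$, which are $\mathbb{F}_p$-linear but in general not $K$-linear (Frobenius is the basic counterexample). So the map $F\to\mathrm{End}_K(F,+)=M_n(K)$ is not available as stated, and the conclusion ``$F$ is a finite field extension of $K$'' does not follow; even in characteristic zero that conclusion needs a further argument, since an embedding of $F$ as a commutative sub\emph{ring} of $M_n(K)$ does not by itself make $F$ a $K$-subalgebra (one must first locate $K$ inside $F$, or replace ``subring'' by the $K$-algebra it generates and then untangle the two). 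The standard treatments (Poizat's \emph{Stable Groups}, Pillay's \emph{Geometric Stability Theory}) deal with this by first invoking Macintyre's theorem (Fact~\ref{fact_mac} in the paper) to get that $F$ is algebraically closed, and then analysing the action of the connected commutative algebraic group $F^{*}$ on $\mathbb{G}_a^n$ to force $n=1$ before identifying $F$ with $K$ definably --- this is where the characteristic-$p$ twists are actually tamed, and it is the piece missing from your sketch.
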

Variants of these statements for separably closed fields were proved in \cite{Mes}.
In the real closed fields and their $o$-minimal expansions, again, there is a very nice notion of dimension, and Pillay's Conjecture provides a link between definable groups and Lie groups. Moreover, the following was proved in (\cite{Pil2}).
\begin{fact}\label{fact_omin_fields}
 Every infinite field definable in an o-minimal structure is either real closed or algebraically closed.
\end{fact}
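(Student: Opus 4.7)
The plan is to use o-minimality to bootstrap the bare field structure of $K$ into strong topological and geometric structure, and then to apply classical algebraic arguments about Artin--Schreier extensions. Throughout, let $M$ denote the ambient o-minimal structure and let $K$ be an infinite field definable in $M$.

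First, I would equip $K$ with the structure of a definable topological field. Being infinite and definable, $K$ has a well-defined finite o-minimal dimension $n \geq 1$ by cell decomposition, and the set of points at which $K$ is locally a definable $n$-cell is cofinite in $K$. The translation action of $(K,+)$ on itself by definable homeomorphisms then transports this local structure everywhere, so that $(K,+)$ becomes a definable topological group, and in fact a definable $n$-manifold over $M$. The same analysis applied to the action of $K^\times$ on $K$ upgrades $K$ to a definable topological field. This is essentially Pillay's general theorem that a group definable in an o-minimal structure carries a canonical definable manifold structure making the operations continuous (indeed generically smooth).

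Second, I would check that $K$ has characteristic $0$. In characteristic $p > 0$, the Frobenius $x \mapsto x^p$ is a definable injective field endomorphism of $K$, hence a definable self-embedding of the $n$-manifold $K$ into itself. Combined with the infiniteness of $K$ and the rigid dimension theory in the o-minimal context (injective definable maps preserve dimension, and proper definable subgroups of $(K,+)$ are severely constrained), this forces a contradiction. Hence $\mathrm{char}(K) = 0$, and in particular $K$ contains a copy of $\Q$.

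The heart of the argument, and the step I expect to be the main obstacle, is to show that $K$ is either real closed or algebraically closed. My plan is to produce, \emph{definably}, a real closed subfield $R \subseteq K$ of index at most $2$, and then apply the Artin--Schreier theorem, which asserts that any proper algebraic extension of a real closed field has degree $2$ and is obtained by adjoining $\sqrt{-1}$; thus $K = R$ or $K = R(\sqrt{-1})$, so $K$ is real closed or algebraically closed. To produce $R$, I would study the definable topological group $K^\times$ and classify its one-dimensional definable subgroups: the only model for such a subgroup in the o-minimal world is an ordered one-parameter subgroup (an analogue of $\mathbb{R}_{>0} \subseteq \mathbb{C}^\times$), and the definable ordering it carries propagates to a definable ordering on an associated subring whose real closure inside $K$ gives $R$. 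The hard part is ruling out pathological behaviour of $K^\times$ and showing that the resulting $R$ really is definable and really is of index at most two; this requires a careful analysis of definable one-dimensional groups in o-minimal structures (along the lines of Strzebonski and Peterzil--Starchenko), together with the observation that any definable finite algebraic extension of $R$ inside $K$ must be Galois of degree bounded by $n / \dim(R)$, which combined with Artin--Schreier forces the conclusion.
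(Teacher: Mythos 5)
This statement is quoted by the paper from Pillay's paper \cite{Pil2}; the present paper does not reprove it, so your proposal has to stand against Pillay's original argument. The opening move (definable manifold/topological field structure on $K$, via the o-minimal theory of definable groups) and the proposed endgame (Artin--Schreier) are in the right spirit, but there are concrete gaps in between.

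\emph{Characteristic $0$.} The Frobenius argument is inconclusive as written. A definable injective endomorphism preserves dimension, so $K^p$ is a definable subgroup of $(K,+)$ of full dimension; if $K$ is definably connected this forces $K^p=K$, i.e.\ $K$ is perfect --- but it does not rule out characteristic $p$. The argument that actually works is the torsion one: in any o-minimal structure the $n$-torsion of a definable group is finite for every $n$, while in characteristic $p>0$ every element of $(K,+)$ has order dividing $p$, contradicting infiniteness of $K$.

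\emph{The real closed subfield.} The key claim, that every one-dimensional definable subgroup of $K^\times$ is ``an ordered one-parameter subgroup,'' is false. If $K$ is already algebraically closed of dimension $2$ (take $K=\mathbb{C}$ definable in $\mathbb{R}$), then $K^\times$ contains a one-dimensional, definably connected subgroup with dense torsion (the circle), which admits no compatible ordering. Even when an orderable one-dimensional $H\le K^\times$ exists, the subring it generates need not be definable, and the real closure of that subring inside $K$ has no reason whatsoever to be definable. And the stated dimension bound gives $[K:R]\le \dim K/\dim R$, which is not $\le 2$ without further argument. So the step producing a definable real closed $R$ of index at most $2$ does not go through.

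Pillay's actual proof does not construct such an $R$ directly. After passing to the definably connected component and establishing characteristic $0$, the pivot is the squaring map on $K^\times$: it has finite fibres, so $(K^\times)^2$ is a definable subgroup of full dimension, hence of finite index. In the index-$2$ case this index-$2$ subgroup gives a definable ordering on $K$, and one concludes real closedness via the o-minimality of the resulting ordered field; in the index-$1$ case one shows $K^\times$ is divisible and pushes through to algebraic closedness. Artin--Schreier enters at the level of this dichotomy, not via a directly constructed definable real closed subfield, and it is exactly the case analysis and the index computation that your proposal is missing.
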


There are many more results on groups definable in fields and in their expansions such as differential fields, fields with a generic automorphism, or valued fields.
In a  different flavour, it was proved in \cite{skl} that there are no infinite  fields definable in free groups.
Groups definable in ordered vector spaces over  ordered division rings were studied in \cite{ES}.  

Our second motivation is understanding certain phenomena in NSOP$_1$ structures - a very broad class of `tame' structures	 studied intensively in recent years, with the vector spaces with a generic bilinear form being one of the main algebraic examples. This motivation is addressed most directly in Section \ref{sect_generics}, which, however, relies on our study of dimension in earlier sections.

A systematic study of vector spaces with a bilinear form was first undertaken in \cite{Gr}. Several fundamental results concerning completeness, model completeness, and quantifier elimination were established there. As finite-dimensional vector spaces with a bilinear form are definable in the underlying field of scalars, only the infinite-dimensional case goes really beyond the (model-theoretic) study of the field. The main focus in \cite{Gr} was on the theory $T_\infty$ of infinite-dimensional vector spaces over an algebraically closed field of any fixed characteristic different from 2 with a nondegenerate symmetric (or alternating) bilinear form (this is 
a slight abuse of notation, as this means in  fact considering a family of different theories, depending on whether the form is assumed to be symmetric or alternating, and also on the characteristic of the field of scalars, all of which are denoted by $T_\infty$).
A certain independence relation $\ind^\Gamma$ on models on $T_\infty$ was constructed there, and it was proved that it shares many nice properties with forking independence in stable theories (forking independence is a central notion in model theory generalising linear independence in vector spaces and algebraic independence in algebraically closed fields to abstract contexts). These results were later used in \cite{CR} to prove that $T_\infty$ is NSOP$_1$.  $T_\infty$ was further studied in \cite{KR}, where the canonical independence relation in NSOP$_1$ theories called Kim-independence (and denoted $\ind^K$) was introduced, and described in particular in $T_\infty$ (some corrections are needed in that description, see Proposition \ref{kim_description} and the discussion preceding it). 
It was then deduced in  \cite{KR} that  $\ind^\Gamma$ is strictly stronger than $\ind^K$.

In \cite{CH} it was proved that (the completions of) the theories of vector spaces with a nondegenerate bilinear form over an NIP (another tameness property studied extensively in model theory) field satisfy a generalisation of NIP called NIP$_2$; in particular, $T_\infty$ and $T^{RCF}_\infty$ (see the paragraph below) are examples of  NIP$_2$ theories which are not NIP. 

In this paper, we study the theory (strictly speaking, the theories) $T_\infty$ and the theory (two theories) $T^{RCF}_\infty$ of infinite-dimensional vector spaces over a real closed field equipped with a nondegenerate alternating bilinear form or a nondegenerate symmetric positive-definite bilinear form (RCF stands for the theory of real closed fields). In the final chapter of \cite{Gr} (12.5) it was asked whether every group definable in $T_\infty$ is finite Morley rank-by-abelian-by finite Morley rank, which we confirm in Section \ref{sec_gps}. We also prove that finite Morley rank quotients of groups definable in  $T_\infty$ by definable normal subgroups are definable in $T_\infty$ and hence they are algebraic (by `algebraic' we mean definably isomorphic to an algebraic group over the field of scalars), thus  obtaining that all groups definable in $T_\infty$ are (algebraic-by-abelian)-by-algebraic. This conclusion is optimal in the sense that none of the three components in `(algebraic-by-abelian)-by-algebraic' can be omitted (see Remark \ref{optimal}).
Using our theorem about groups, we deduce that every field definable in $T_\infty$ is finite-dimensional, and hence either finite or definably isomorphic to the field of scalars. We also prove analogous results about groups and fields definable in $T^{RCF}_\infty$. As our main tool, we develop a notion of dimension on sets definable in $T_\infty$ and $T_\infty^{RCF}$, whose good behaviour has several other consequences which may be of independent interest.

Most of the arguments in the paper are carried out simultaneously for $T_\infty$, where we use Morley rank to define dimension, and for $T^{RCF}_\infty$, where we use a topological dimension (called \emph{$o$-minimal dimension}) for this purpose. Except Section \ref{sect_generics} where we focus on model-theoretic properties of $T_\infty$, the only  significant difference between the two cases is that in $T_\infty$ every definable set has finite multiplicity with respect to our dimension notion, which does not hold in $T^{RCF}_\infty$. Because of this, we need separate arguments for $T_\infty$ and  $T^{RCF}_\infty$ in  the proof of Corollary \ref{cor_elim}. 
Our proof of finiteness of multiplicity in $T_\infty$ implies in particular that given a system of finitely many equations using the linear space operations and the bilinear form, the algebraic varieties obtained by intersecting the set of solutions of the system with finite-dimensional nondegenerate linear subspaces have uniformly bounded number of irreducible components of maximal dimension in the sense of algebraic geometry (cf. Theorem \ref{thm_mlt}(1)).


The paper is organised as follows. In Section \ref{prel} we recall some basic facts about Morley rank and the $o$-minimal (topological) dimension, and about model theory of vector spaces with bilinear forms. 

In Section \ref{sec_1} we review the notions of dimension and codimension of a definable subset of the vector sort $V$ introduced in \cite{Gr}, filling a gap in the construction.

In Section \ref{sect_dim} we extend the notion of dimension to arbitrary definable sets and types in $T_\infty$ and $T^{RCF}_\infty$, and we prove that it has properties similar to those of Morley rank in strongly minimal theories (Corollary \ref{big_col}).

In Section \ref{sect_lascar} we prove an analogue of Lascar's equality for $T_\infty$ and $T^{RCF}_\infty$, and we relate our notion of dimension to the linear dimension.

In Section \ref{sect_mlt} we define multiplicity of a definable set in analogy with Morley degree, and we prove that every set definable in $T_\infty$ has finite multiplicity. Using this, we prove that a quotient of a group definable in $T_\infty$ by a definable normal subgroup is algebraic provided that it has finite Morley rank (and, using some additional argument, an analogous result for $T^{RCF}_\infty$). We also derive another consequence of finiteness of multiplicity in $T_\infty$: in every definable set there are only finitely many complete generic types (over any fixed model), and each of them is a definable type.

In Section \ref{sec_gps} we first observe that every group definable in $T_\infty$ has a definable connected component, and then we prove the main results of this paper: every group definable in $T_\infty$ is (algebraic-by-abelian)-by-algebraic, and every field definable in $T_\infty$ has finite dimension,  hence is either finite or definably isomorphic to the field of scalars $K$. Simultaneously, we prove the corresponding results for $T^{RCF}_\infty$.

In Section \ref{sect_generics} we prove that every set of parameters in $T_\infty$ is an extension base (i.e. $T_\infty$ satisfies the existence axiom for forking independence) and we give a description of Kim-independence in $T_\infty$ over arbitrary sets, correcting in particular the description of Kim-independence over models in $T_\infty$ given in \cite{KR}. Finally, we prove that in every group $G$ definable in $T_\infty$  the $\ind^\Gamma$-generics are precisely the generics in the sense of dimension (in particular $\ind^\Gamma$-generics exist in $G$), and that the additive group $(V,+)$ of the vector sort does not have any $\ind^K$-generics over any set.

All sections except the last one (Section \ref{sect_generics}) require only a very basic understanding of first-order logic, and should be accessible to readers familiar with concepts such as a model, a complete theory, a type (i.e. a consistent set of formulas), quantifier elimination. 

The author thanks Ehud Hrushovski for pointing out Example \ref{examp}(2) to him, Nick Ramsey for a discussion about Kim-independence in $T_\infty$, and the logic group in Leeds for helpful comments during his seminar talk reporting on this work, 

\section{Preliminaries}\label{prel}
\subsection{Morley rank and the $o$-minimal dimension}
Let $T$ be a complete theory, and let $M\models T$.
\begin{definition}
 The Morley rank of a formula $\phi$ over $M$ defining a set $S$, denoted $\RM(\phi)$ or $\RM(S)$, is an ordinal or $-1$ or $\infty$, defined by first recursively defining what it means for a formula to have Morley rank at least $\alpha$ for some ordinal $\alpha$:
\begin{itemize}
 \item $\RM(S)\geq 0$ iff $S\neq  \emptyset$.
 \item If $\alpha=\beta+1$ is a successor, then $\RM(S)\geq \alpha$ iff for every $n\in \omega$ there are disjoint sets $(X_i)_{i\in \{1,\dots,n\}}$ definable in some elementary extension $N$ of $M$ such that $\RM(X_i)\geq \beta$ and $X_i\subseteq \phi(N)$ for each $i\in \{1,\dots,n\}$.
 \item If $\lambda$ is a limit ordinal then $\RM(S)\geq \lambda$ iff $\RM(S)\geq \alpha$ for every $\alpha<\lambda$.
\end{itemize}

Finally, $\RM(S)=\alpha$ when $\RM(S)\geq \alpha$ and for no $\beta>\alpha$
one has $\RM(S)\geq \beta$. Also, we set $\RM(S)=\infty$ if $\RM(S)\geq \alpha$ for every $\alpha\in Ord$. If $\RM(S)\in Ord$, then the Morley degree of $S$, denoted by $\DM(S)$, is the maximal number of definable sets of Morley rank $\RM(S)$ into which $S$ can be partitioned.
\end{definition}
If $M=(F,+,\cdot,0,1)$ is an algebraically closed field (which is essentially the only case in which we consider Morley rank in this paper), passing to an elementary extension $N$ of $M$ in the above definition is not necessary - the sets $X_i$  may be chosen to be definable in $M$.

A one-sorted structure $M$ (or its theory $Th(M)$) is called \emph{strongly minimal} if $\RM(x=x)=\DM(x=x)=1$ where $x$ is a single variable of the only sort of $M$. Equivalently, every definable subset of any model $\C\models Th(M)$ is either finite of co-finite. Any algebraically closed field is strongly minimal. For $p$ equal zero or a prime number, $ACF_p$ denotes the (complete) theory of algebraically closed fields of characteristic $p$.

\begin{fact}\cite{Mac}\label{fact_mac}
 An infinite field has finite Morley rank if and only if it is algebraically
closed (if and only if it is strongly minimal).
\end{fact}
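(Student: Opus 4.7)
The forward direction is routine and I would dispatch it first. In an algebraically closed field $K$, quantifier elimination for $ACF_p$ shows that every subset of the affine line definable with parameters is a Boolean combination of zero sets of polynomials in one variable, hence is finite or cofinite. This gives $\RM(x=x)=\DM(x=x)=1$, i.e.\ strong minimality (and in particular finite Morley rank). Conversely, strong minimality trivially implies finite Morley rank.

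The interesting direction is: an infinite field $K$ of finite Morley rank is algebraically closed (Macintyre). The plan rests on the standard fact that a group of finite Morley rank satisfies the descending chain condition on definable subgroups, because a proper definable inclusion $H\subsetneq G$ forces $(\RM(H),\DM(H))<_{\mathrm{lex}}(\RM(G),\DM(G))$, and $\mathrm{Ord}\times\mathbb{N}$ under $\leq_{\mathrm{lex}}$ is well-founded. I would apply this DCC to both the additive group $(K,+)$ and the multiplicative group $(K^\times,\cdot)$ to obtain:
\begin{itemize}
\item \textbf{Divisibility of $K^\times$.} For any prime $\ell$, the endomorphism $x\mapsto x^\ell$ of $K^\times$ has finite kernel (the $\ell$-th roots of unity, or $\{1\}$ when $\ell=\mathrm{char}\,K$), hence its image $(K^\times)^\ell$ has the same Morley rank as $K^\times$; iterating and invoking DCC for the chain $(K^\times)^{\ell^n}$ forces $(K^\times)^\ell=K^\times$.
\item \textbf{Surjectivity of Artin--Schreier maps.} If $\mathrm{char}\,K=p>0$, the additive map $\wp(x)=x^p-x$ on $(K,+)$ has finite kernel $\mathbb{F}_p$, so the same DCC-plus-rank-preservation argument shows $\wp$ is surjective.
\end{itemize}

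From these two ingredients I would rule out proper finite Galois extensions of prime degree: a degree-$\ell$ cyclic extension with $\ell\neq\mathrm{char}\,K$ is a Kummer extension (after adjoining $\ell$-th roots of unity, which the divisibility argument keeps under control), hence is killed by $\ell$-divisibility of $K^\times$; a degree-$p$ cyclic extension in characteristic $p$ is Artin--Schreier and is killed by surjectivity of $\wp$. The final step is the classical Artin--Schreier theorem from field theory: if $K$ admits no cyclic extension of prime degree, then either $K$ is algebraically closed or $[\bar K:K]=2$ and $K$ is real closed. The real-closed case is excluded because a real closed field carries a definable dense linear order and hence admits, inside $x=x$, infinitely many pairwise-disjoint infinite definable subsets (open intervals), contradicting finite Morley rank.

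The main obstacle I expect is the combinatorial care required in the Kummer step: one must verify that the $\ell$-th roots of unity live in $K$ (or, if not, that passing to $K(\mu_\ell)$ preserves finite Morley rank and transfers divisibility back), and one must correctly reduce an arbitrary proper finite extension to a cyclic prime-degree one via Galois-theoretic and Sylow arguments on the (possibly infinite) absolute Galois group. The DCC/rank manipulations themselves are clean; the delicacies sit in the interface with classical field theory.
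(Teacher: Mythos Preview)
The paper does not supply a proof of this statement; it is merely cited from \cite{Mac} as a known fact, so there is nothing in the paper to compare your argument against. Your outline is indeed the standard route to Macintyre's theorem.

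There is, however, a genuine gap in your divisibility step. From DCC you only obtain $(K^\times)^{\ell^N}=(K^\times)^{\ell^{N+1}}$ for some $N$, i.e.\ the \emph{bottom} of the chain is $\ell$-divisible; this does not by itself force $(K^\times)^\ell=K^\times$. For instance, the abelian group $\mathbb{Z}/\ell\mathbb{Z}\times\mathbb{Q}$ has finite Morley rank, its chain of $\ell^n$-th powers stabilises immediately at $\{0\}\times\mathbb{Q}$, yet the group is not $\ell$-divisible. The same issue affects your Artin--Schreier step. The standard repair is to first prove that $(K,+)$ is \emph{connected}: its connected component is invariant under every scalar multiplication $x\mapsto ax$, hence is a nonzero ideal, hence is all of $K$. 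A connected group of finite Morley rank has Morley degree $1$; since the cosets of $(K^\times)^\ell$ in $K^\times$ are pairwise disjoint definable subsets of $K$ each of full Morley rank (being translates of the image of the finite-to-one map $x\mapsto x^\ell$), there can be only one such coset, giving $(K^\times)^\ell=K^\times$ directly. Surjectivity of $x\mapsto x^p-x$ follows in the same way from connectedness of $(K,+)$. After this correction your remaining plan---Kummer and Artin--Schreier theory to exclude cyclic prime-degree extensions, the Artin--Schreier theorem on fields with finite absolute Galois group, and ruling out the real closed case via the definable order---is correct.
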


If $K$ is an algebraically closed field and $X$ is an algebraic subset of $K^n$ for some $n\in \omega$, then $\RM(X)$ is the dimension of $X$ in the sense of algebraic geometry, and $\DM(X)$ is the number of irreducible components of $X$ of maximal dimension.


In real closed fields Morley rank of any infinite set is equal to $\infty$, but there is another useful notion of dimension (having various equivalent definitions). 
\begin{definition}
 Let $(R,+,\cdot,\leq)$ be a real closed field (or, more generally, an $o$-minimal structure).  
For  a nonempty definable $X\subseteq R^k$  the (topological) dimension of $X$, denoted by $\dim_t(X)$, is the greatest number $n$ such that a nonempty definable open (in the order topology) subset of $R^n$ embeds definably into $X$. We also put $\dim_t(\emptyset)=-1$.
 \end{definition}
Again, for algebraic subsets of $R^n$ where $R\models RCF$, $\dim_t$ coincides with the dimension in the sense of algebraic geometry.

\begin{definition}
 We say an $S$-valued (where $S$ is any set) rank $\rk$ on the collection of all  sets definable in $T$ is \emph{definable} (over $\emptyset$) if for any formula $\phi(x,y)$ over $\emptyset$, $n\in S$, and $\C\models T$ the set $\{a\in \C:\rk(\phi(x,a))=n\}$ is definable over $\emptyset$. 
\end{definition}

\begin{fact}\label{fact_rm}
Let $\rk$ be either Morley rank in a strongly minimal theory, or the topological dimension in a real closed field (or in any $o$-minimal theory).
Suppose $X_1$ and $X_2$ are definable. Then:\\ 
(0) $\rk(X_1)\in \omega\cup \{-1\}$ and $\rk(X_1)=0$ iff $X_1$ is finite and nonempty.\\
(1) If $X_1\subseteq X_2$, then $\rk(X_1)\leq \rk(X_2)$.\\
(2) $\rk(X_1\cup X_2)=\max(\rk(X_1),\rk(X_2))$.\\
(3) If there is a definable bijection between $X_1$ and $X_2$, then $\rk(X_1)=\rk(X_2)$.
\\
(4) More generally, if  $f:X_1\to X_2$ is a definable surjection and there is $d\in \omega$ is such that $\rk(f^{-1}(y))=d$ for each $y\in X_2$, then $\rk(X_1)=\rk(X_2)+d$ unless $X_2$ is empty.\\
In particular, if $\emptyset\neq Z\subseteq Y\times X$ and there is $d\in \omega$  such that $\rk(\{x\in X:(y,x)\in Z\})=d$ for every $y\in Y$, then $\rk(Z)=\rk(Y)+d$. \\
(5) $\rk$ is definable over $\emptyset$.

In a strongly minimal theory we also have:\\
(6)  For any $n,d\in \omega$ and a formula $\phi(x,y)$  the set $\{a\in \mathbb{\C}:\RM(\phi(x,a))=n,\DM(\phi(x,a))=d\}$ is definable over $\emptyset$, and only for finitely many pairs $(n,d)$ this set is nonempty.\\
(7) If $\RM(X_1)<\RM(X_2)$, then $\DM(X_1\cup X_2)=\DM(X_2)$.\\
 (8) $\DM(X_1\cup X_2)\leq \DM(X_1)+\DM(X_2)$\\
 (9) If $f:X\to Y$ is a definable surjection such that $\DM(Y)=m\in \omega$ and there are $s,m'\in \omega$ such that $\RM(f^{-1}(y))=s$ and $\DM(f^{-1}(y))\leq m'$ for every $y\in Y$, then $\DM(Y)\leq \DM(X)\leq mm'$.
\end{fact}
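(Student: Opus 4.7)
The plan is to treat the Morley rank case and the $o$-minimal dimension case separately, since the notions are quite different in nature; nevertheless, the parallel structure of the statement suggests I would organize the proofs in a parallel fashion, item by item. For the Morley rank half, the key underlying observation is that in a strongly minimal theory, $\acl$ yields a pregeometry, and Morley rank coincides with its dimension on generic points; in particular rank is always finite, bounded above by the number of free variables. For $\dim_t$ in an $o$-minimal structure, the central tool is cell decomposition: every definable set admits a finite partition into cells, and $\dim_t$ equals the maximum of the cell dimensions.

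I would prove (0)--(3) by direct induction on the recursive definition of $\RM$ and by elementary manipulation of cell decompositions, respectively. The crucial item is (4), the Lascar-style equality $\rk(X_1)=\rk(X_2)+d$ for fibrations with constant fibre rank. In the Morley rank case I would induct on $\rk(X_2)+d$: the inequality $\leq$ comes from pulling back disjoint rank-$r$ families in $X_2$ to disjoint rank-$(r+d)$ families in $X_1$, using compactness plus the definability of rank (item (5)/(6)); the inequality $\geq$ uses finiteness of rank and the pregeometry property to ``spread" a generic of $X_1$ over both $X_2$ and its fibre. In the $o$-minimal case, I would take a cell decomposition of $X_1$ compatible with $f$ and with a decomposition of $X_2$, and then count dimensions cellwise. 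The ``in particular" reformulation of (4) in terms of $Z\subseteq Y\times X$ follows by applying (4) to the projection $Z\to Y$.

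Items (5)--(9) concern definability of rank and the combinatorics of degree. Property (5) in the Morley case reduces to (6), which I would prove by induction on $n$: the set of parameters $a$ with $\RM(\phi(x,a))\geq n+1$ is defined by asserting the existence of some number $k$ of disjoint subformulas of rank $\geq n$, and an a~priori bound on $k$ comes from an inductive analysis of multiplicity in $n$ variables. In the $o$-minimal case, (5) is a direct consequence of the fact that cell decomposition is uniform in parameters. Property (7) is immediate from the definition of $\DM$, since any rank-$\RM(X_2)$ partition of $X_1\cup X_2$ has components that meet $X_1$ in rank $<\RM(X_2)$; (8) follows by concatenating partitions; and (9) follows by applying (4) inside each part of a maximal $\RM(Y)$-rank partition of $Y$. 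The main obstacle I expect is (4) together with the uniform-definability statements (5)--(6), as these constitute the technical heart of the theory and underpin the remaining items; the rest of the properties are essentially bookkeeping once these have been established.
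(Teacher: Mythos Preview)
The paper does not actually prove Fact~\ref{fact_rm}: it is stated as background from the literature, with the sole remark that ``(9) above can be proved in the same way as Proposition~\ref{prop_mlt}(4).'' So there is no paper proof to compare against; your proposal is a reasonable outline of how one would establish these standard results from scratch, which is more than the paper does.

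One small slip in your sketch of (4): you say the inequality $\rk(X_1)\leq \rk(X_2)+d$ ``comes from pulling back disjoint rank-$r$ families in $X_2$ to disjoint rank-$(r+d)$ families in $X_1$,'' but that pullback argument actually yields the \emph{opposite} inequality $\rk(X_1)\geq \rk(X_2)+d$ (many disjoint large-rank subsets of $X_1$ witness a lower bound on $\rk(X_1)$). The $\leq$ direction is the one that requires splitting a generic of $X_1$ into a base-generic and a fibre-generic via the pregeometry, which you attribute to the $\geq$ direction. Swapping these two explanations fixes the sketch. For (9), note that the paper's hint (mimic Proposition~\ref{prop_mlt}(4)) is essentially the argument you give: partition $Y$ into $m$ pieces of full rank and degree~$1$, and use (4) on each piece to bound the degree of the preimage.
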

(9) above can be proved in the same way as Proposition \ref{prop_mlt}(4).

Finally, let us mention that if $K$ is an algebraically closed field or a real closed field, then it admits (uniform) \emph{elimination of imaginaries} (EI), that is, if $E$ is any definable equivalence relation on $K^n$ then the quotient $K^n/E$ is in a definable bijection (in the structure $K$ with the sort $K^n/E$ added) with a definable subset of $K^m$ for some $m$. However, the theories $T_\infty$ and $T^{RCF}_\infty$ considered in this paper do not admit EI, hence we will need some extra care when dealing with quotients  there. 

%

\subsection{Generic bilinear forms}
We start by recalling some notation from \cite{Gr}. 	
Let $L$ be the two-sorted language with sorts $V$ (vectors), and $K$ (scalars), containing constant symbols $0_V$, $0_K$, and $1_K$, as well as binary function symbols: $+_V$, $+_K$, $\circ_K$, $\gamma$, $[,]$, which we shall interpret as: 
vector addition, field addition, field multiplication, scalar multiplication, and a bilinear form on the vector space.

We fix $p$ to be $0$ or a prime number different from $2$, and we let $T_0=ACF_p$, the (complete) theory of algebraically closed fields of characteristic $p$. As the value of $p$ will not play any role in the paper (and in the results of \cite{Gr}), it is omitted in the notation below.  

\begin{definition}
Let  $m\in \omega\cup \{\infty\}$ and $T_0$ be either ACF$_p$ or RCF. By $_ST^{T_0}_m$ [respectively, $_AT^{T_0}_m$] we denote the $L$-theory expressing that the sort $K$ is a model of $T_0$, the sort $V$ is an $m$-dimensional vector space over $K$, and that $[,]$ is a nondegenerate symmetric [respectively, alternating] $K$-bilinear form on $V$, and additionally $_ST_m^{RCF}$ says that $[,]$ is positive-definite. We will write  $T^{T_0}_m$ to mean either $_ST^{T_0}_m$ or $_AT^{T_0}_m$. We will also simply write $T_m$ to mean $T^{ACF_p}_m$, which is consistent with \cite[Chapter 12]{Gr}, and  $T^*_m$ to mean either
$T^{ACF_p}_m$ or $T^{RCF}_m$.

If $m\in \omega$ then $_ST^{T_0}_m$ is consistent only when $m$ is even, so below we will always assume that $m=\infty$ or $m$ is even in the symmetric case.
\end{definition}

\begin{definition}
 For any $n<\omega$ let $\theta_n(X_1,\dots,X_n)$ be the $L$-formula saying that the vectors $X_1,\dots, X_n$ are linearly independent. Let $L_\theta$ be the expansion of $L$ obtained by adding to $L$ a symbol $\theta_n$ for each $n$ (which we shall interpret as the relation given by the formula $\theta_n$).
\end{definition}

For any $n\in \{1,2,\dots \}$ let $F_n:V^{n+1}\to K^n$ be a definable function sending any tuple $(v_1,\dots,v_{n+1})$ with  $v_1,\dots,v_n$ linearly independent and $v_{n+1}\in \Lin_{K}(v_1,\dots,v_n)$ to the unique tuple $(a_1,\dots,a_n)\in K^n$ such that $v_{n+1}=a_1v_1+\dots+a_nv_n$ (and any other tuple to $(0_K,\dots,0_K)$, say).
In  \cite[Corollary 9.2.3]{Gr} Granger claimed that $T^{T_0}_m$ has quantifier elimination in the language $L_\theta\cup L_K$, where $L_K$ is any language on $K$ bidefinable with $(K,+,\cdot)$ in which $K$ has quantifier elimination.
D. MacPherson has later pointed out that there is a problem with this result, unless one adds function symbols for each $F_n$ to the language. A. Chernikov and N. Hempel have proved that indeed $T^{T_0}_m$  eliminates quantifiers in $L_\theta\cup L_K\cup\{F_n:n\in\omega\}$. Let us remark here that, in the symmetric positive-definite case over a real closed field, the functions $F_n$ are equal to some terms in the language $L_\theta$, hence adding the  $F_n$'s to the language is necessary only in the alternating case. 
For let $v_1,\dots,v_n\in V$ be linearly independent, and $v_{n+1}=\Sigma_{i\leq n} a_iv_i$ for some $a_1,\dots,a_n\in K$.
Let $A$ be the $n\times n$-matrix $([v_i,v_j])_{i,j\leq n}$ and note that $A(a_1,\dots,a_n)^T=([v_1,v_{n+1}],\dots,[v_n,v_{n+1}])^T$. Note that if $b_1,\dots,b_n$ is such that $A(b_1,\dots,b_n)^T=([v_1,v_{n+1}],\dots,[v_n,v_{n+1}])^T$ then $(b_1,\dots,b_n)=(a_1,\dots,a_n)$ as otherwise $\Sigma_{i\leq n}(a_i-b_i)v_i$ would be a non-zero vector orthogonal to $v_1,\dots,v_n$, hence orthogonal to itself, which is a contradiction. So the equation $A(x_1,\dots,x_n)^T=([v_1,v_{n+1}],\dots,[v_n,v_{n+1}])^T$ has exactly one solution $(x_1,\dots,x_n)=(a_1,\dots,a_n)$, and so $A$ is a non-singular matrix and $$F(v_1,\dots,v_{n+1})=(a_1,\dots,a_n)=A^{-1}([v_1,v_{n+1}],\dots,[v_n,v_{n+1}])^T,$$ hence $F(v_1,\dots,v_{n+1})$ is equal to a term in $L_\theta$.
Summarising, we have:
\begin{fact}
Put $L_\theta^F:=L_\theta\cup \{F_n:n\in\omega \}$ and let $T_0$ be a completion of the theory of fields admitting quantifier elimination in a language $L_K$.
Then, for every $m\in \omega$, the theories $_ST^{T_0}_m$ 
and $_AT^{T_0}_m$
 have quantifier elimination in $L^F_\theta\cup L_K$.

In particular,
 for every $m\in\omega\cup \{\omega\}$ (with $m$ even in the alternating case) the theories $_ST_m$ and $_AT_m$ have quantifier elimination in $L^F_\theta$,  $_ST^{RCF}_m$ has quantifier elimination in $L_\theta\cup \{\leq\}$ (by the discussion above) and $_AT^{RCF}_m$ has quantifier elimination in  $L^F_\theta\cup \{\leq\}$,  where $\leq$ is a binary relation symbol interpreted as the unique field ordering on $K$ in the real closed case. 
\end{fact}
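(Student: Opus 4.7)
The plan is to verify quantifier elimination for $T^{T_0}_m$ in $L^F_\theta \cup L_K$ by the standard substructure test: it suffices to show that every partial $L^F_\theta \cup L_K$-isomorphism between substructures of sufficiently saturated models of $T^{T_0}_m$ extends, one element at a time, to the whole model. The particular clauses will then be read off by specialising $L_K$ and invoking the Gram-matrix computation for the symmetric positive-definite case that has already been spelled out in the text just above the Fact.

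First I would set up the test. Fix $M_1 \models T^{T_0}_m$ and an $|M_1|^+$-saturated $M_2 \models T^{T_0}_m$, together with a partial $L^F_\theta \cup L_K$-isomorphism $f \colon A \to M_2$ from a substructure $A$ of $M_1$. Closure of $A$ under the scalar-sort operations of $L_K$ makes $K_A := A \cap K$ an $L_K$-substructure of $K^{M_1}$, and closure of $A$ under the functions $F_n$ is exactly the statement that $V_A := A \cap V$ is a $K_A$-linear subspace of $V^{M_1}$.

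Next I would extend $f$ one element at a time. A new scalar is absorbed directly by the assumed QE of $T_0$ in $L_K$. For a new vector $v \in V^{M_1} \setminus V_A$, closure of $A$ under $F_n$ forces $v$ to be $K_A$-linearly independent from $V_A$, so the quantifier-free $L^F_\theta \cup L_K$-type of $v$ over $A$ is completely encoded by the finite family of scalars $[v,w]$ as $w$ ranges over a basis of $V_A$ (plus $[v,v]$ in the symmetric case) together with the $L_K$-type of that family over $K_A$. Using nondegeneracy of the form, saturation of $M_2$, and the available dimension in $V^{M_2}$ (automatic when $m=\infty$, and otherwise guaranteed by $\dim V_A < m$), I would produce a vector $v' \in V^{M_2}$ linearly independent from $f(V_A)$ with the prescribed pairings and the correct $L_K$-type of those pairings.

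Finally I would derive the statements labelled \emph{in particular}. Setting $L_K = \{+,\cdot,0,1\}$ for $T_0 = ACF_p$ yields QE of $T_m$ in $L^F_\theta$; setting $L_K = \{+,\cdot,0,1,\leq\}$ for $T_0 = RCF$ yields QE of $T^{RCF}_m$ in $L^F_\theta \cup \{\leq\}$. The stronger statement for $_ST^{RCF}_m$, namely that the symbols $F_n$ may be omitted from the language, is exactly the Gram-matrix computation displayed above the Fact: positive-definiteness ensures that $([v_i,v_j])_{i,j \leq n}$ is nonsingular on any linearly independent tuple, so Cramer's rule exhibits $F_n(v_1,\dots,v_{n+1})$ as an $L_\theta$-term in the $v_i$'s and the pairings $[v_i,v_{n+1}]$. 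The main difficulty I expect is the vector-extension step in the symmetric positive-definite case over RCF: one must additionally verify that the amalgamated Gram matrix stays positive-definite, which is a Sylvester-type quantifier-free $L_K$-condition preserved by $f$, but the bookkeeping must be done carefully to keep the construction inside the class of models of $_ST^{RCF}_m$.
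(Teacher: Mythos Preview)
The paper does not give its own proof of this Fact: it is stated with attribution to Granger's thesis \cite{Gr} (Corollary~9.2.3), together with the correction due to Chernikov and Hempel that the function symbols $F_n$ must be added to the language. The only argument the paper itself supplies is the Gram-matrix computation in the paragraph preceding the Fact, showing that in the symmetric positive-definite RCF case each $F_n$ is already an $L_\theta$-term. Your proposal therefore goes beyond what the paper does, by sketching the back-and-forth directly.

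That said, your sketch has a genuine gap in the vector-extension step. You assert that for $v \in V^{M_1} \setminus V_A$ the quantifier-free $L^F_\theta \cup L_K$-type of $v$ over $A$ is encoded by the pairings $[v,w]$ (and $[v,v]$) together with their $L_K$-type over $K_A$. This is only true when $v$ is $K^{M_1}$-linearly independent from $V_A$; being $K_A$-independent (which is all that closure of $A$ under $F_n$ gives you) is not sufficient. If $v = \sum_i a_i w_i$ with $w_i \in V_A$ linearly independent and $a_i \in K^{M_1} \setminus K_A$, then the type of $v$ over $A$ also records $\neg\theta_{n+1}(w_1,\dots,w_n,v)$ and the value $F_n(w_1,\dots,w_n,v) = (a_1,\dots,a_n)$, and this information is not determined by the pairings. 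In this situation you cannot look for a $v'$ linearly independent from $f(V_A)$, as your proposal would have it. The standard repair is a case split: in the dependent case, first extend $f$ on the scalar sort to include the coefficients $a_i$ (using QE for $T_0$), after which $v' := \sum_i f(a_i) f(w_i)$ is forced; only in the genuinely $K^{M_1}$-independent case does your description via pairings apply. With that patch the argument is the standard one and goes through.
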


The following fact follows from the proof of
\cite[Corollary 9.2.9]{Gr}: although in the case $T_m^*={_ST^{RCF}_m}$ it does not formally follow from \cite[Corollary 9.2.9]{Gr} as in a real closed field not all elements have square roots, this condition is only used to transform a normal basis to an orthonormal basis (see the proof of \cite[Proposition 9.1.5]{Gr}), which  clearly can be done over any real closed field if $[,]$ is positive definite.
\begin{fact}
 For any $m\in \omega\cup \{\infty\}$ the $L$-theory $T^*_m$ is complete.
\end{fact}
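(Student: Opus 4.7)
The plan is to derive completeness of $T^*_m$ from the quantifier elimination result just stated, combined with the completeness of the scalar theory $T_0$.

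First I would invoke the preceding Fact to replace an arbitrary $L$-sentence $\varphi$ by an equivalent (modulo $T^*_m$) quantifier-free sentence $\psi$ in $L^F_\theta\cup L_K$, with $\leq$ added in the RCF case. So it suffices to show that $T^*_m$ decides every quantifier-free $\emptyset$-sentence.

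Next I would argue that every such $\psi$ collapses to a sentence about the scalar sort alone. The only constants of $L$ are $0_V$, $0_K$, $1_K$. Every closed $V$-sort term built from $0_V$ via $+_V$ and the scalar multiplication $\gamma$ evaluates to $0_V$; consequently $[0_V,0_V]=0_K$ by bilinearity; the function $F_n$ applied to the all-zero tuple returns $(0_K,\dots,0_K)$ by the convention fixed in its definition (the linear-independence hypothesis on its input fails); and $\theta_n(0_V,\dots,0_V)$ is false for every $n\ge 1$. Hence every atomic subformula of $\psi$ mentioning the vector sort has a truth value or numerical value determined without reference to the ambient model, and $\psi$ reduces to a Boolean combination of atomic $L_K$-sentences involving only $0_K$ and $1_K$ (and $\leq$ in the RCF case). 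Any such sentence is decided by $T_0$, which is complete ($ACF_p$ or $RCF$); since $T^*_m$ contains $T_0$ on the $K$ sort, it decides $\psi$ and hence $\varphi$.

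The main obstacle is purely bookkeeping: one has to go through the function symbols $F_n$ and the predicates $\theta_n$ and confirm that, when fed the only vector constant $0_V$, they produce no ``hidden'' nontrivial information beyond what is already recorded in the $K$-sort. An alternative, more geometric route --- closer in spirit to Granger's proof of his Corollary~9.2.9 --- would construct an isomorphism between sufficiently saturated models of the same cardinality by first aligning their scalar sorts (using completeness of $T_0$) and then transporting a normal basis of the vector sort: symplectic in the alternating case, orthonormal in the symmetric one. Taking this route, the only genuinely new point needed for $_ST^{RCF}_m$ is the existence of square roots used to normalize the basis, which in a real closed field is supplied by the positive-definiteness of $[,]$ combined with real closedness of $K$, exactly as remarked in the paragraph above the statement.
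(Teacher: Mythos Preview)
Your primary argument is correct: once quantifier elimination is in hand, every closed $L^F_\theta\cup L_K$-sentence reduces to a Boolean combination of atomic sentences whose vector-sort terms all evaluate to $0_V$, so the whole thing is decided by the complete theory $T_0$ on the scalar sort. The bookkeeping you flag (values of $F_n$ and $\theta_n$ on all-zero inputs) is exactly what is needed and you handle it correctly.

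This is a genuinely different route from the one the paper invokes. The paper does not give its own argument but defers to Granger's proof of his Corollary 9.2.9, which is the semantic approach you sketch as an alternative: align the scalar fields (using completeness of $T_0$) and then match normal bases of the vector sort, symplectic or orthonormal as the case may be. Your syntactic route is more self-contained here, since the paper has just finished stating the quantifier-elimination result and your argument uses nothing else; the semantic route, by contrast, yields a bit more (an explicit back-and-forth between models) but requires re-importing the basis machinery from Granger. Your closing remark about positive-definiteness supplying the needed square roots in the $_ST^{RCF}_m$ case matches precisely the one adjustment the paper singles out when citing Granger.
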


For a set or a tuple $A$, by $V(A)$ we mean the set of vectors belonging to $A$, and $\acl(A)$ [respectively, $\dcl(A)$] denotes the model-theoretic algebraic [definable] closure of $A$, that is, the set of elements  whose type over $A$ has finitely many realisations [only one realisation]. 
The following fact easily follows from quantifier elimination (cf. \cite[Proposition 9.5.1, Proposition 12.4.1]{Gr}).
\begin{fact}\label{dcl}
 Let $M=(V,K)\models T^*_\infty$ and $ A\subseteq M$. Then:\\
 (1) For any $v\in V\backslash \Lin_K(A)$ the type $\tp(v/A)$ is implied by $p_{v,A}(x):= \{ [x,a]=[v,a]:a\in V(A)\}\cup
 \{[x,x]=[v,v]\}\cup \{ \theta_n(a_1,\dots,a_n)\to\theta_{n+1}(a_1,\dots,a_{n},x):a_1,\dots,a_n\in V(A)\}$.\\
 (2) $\acl(A)\subseteq Lin_K(V(A))$.
 \end{fact}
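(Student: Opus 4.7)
The plan is to deduce both parts from the quantifier-elimination result stated in the preceding Fact, for $T_\infty$ in $L_\theta^F \cup L_K$ and for $T^{RCF}_\infty$ in the appropriate real-closed variant.

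For part (1), I would fix an arbitrary realisation $v'$ of $p_{v,A}(x)$ in some model extending $A$ and aim to show $\tp(v'/A) = \tp(v/A)$. By QE, it will suffice to produce an $L$-isomorphism $\sigma$ between the $L$-substructures generated by $A \cup \{v\}$ and $A \cup \{v'\}$ that restricts to the identity on $A$ and sends $v \mapsto v'$. Because both $v$ and $v'$ are linearly independent over $V(A)$ (the $\theta_n$-clause in $p_{v,A}$ supplies this for $v'$), the vector-sort parts of the two substructures are $\Lin_K(V(A)) \oplus Kv$ and $\Lin_K(V(A)) \oplus Kv'$ respectively. I define $\sigma$ by $K$-linearity, $\sigma(u + cv) := u + cv'$ for $u \in \Lin_K(V(A))$ and $c \in K$, and as the identity on the $K$-sort part of $A$. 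The verifications to make are then: (a) $\sigma$ preserves the bilinear form, by expanding $[u_1+c_1 v,\,u_2+c_2 v]$ bilinearly and invoking $[v,a] = [v',a]$ for $a \in V(A)$ (extended to $\Lin_K(V(A))$ by linearity) together with $[v,v] = [v',v']$; (b) $\sigma$ preserves each predicate $\theta_n$, since a linear-dependence relation on a tuple involving $v$ translates under the $K$-linear bijection to the analogous relation involving $v'$, and the $\theta_n$-clause in $p_{v,A}$ precisely ensures these match; (c) $\sigma$ preserves each function $F_n$, whose output consists of coefficients determined by the linear-dependence pattern preserved by $\sigma$.

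For part (2), given $v \in V \setminus \Lin_K(V(A))$, it is enough to show that $\tp(v/A)$ has infinitely many realisations, whence $v \notin \acl(A)$. Using (1), this reduces to showing the partial type $p_{v,A}(x)$ has infinitely many realisations in the monster model. The plan is a compactness argument: the collection $\Sigma((x_i)_{i<\omega}) := \bigcup_i p_{v,A}(x_i) \cup \{x_i \neq x_j : i \neq j\}$ is consistent provided each finite subset is realised in some infinite-dimensional model. I would produce $n$ pairwise distinct realisations as perturbations $v_i := v + w_i$ with the $w_i$ linearly independent and lying in the orthogonal complement of $V(A)$: in the alternating case the equality $[v+w_i,v+w_i]=[v,v]$ is automatic; in the symmetric algebraically closed case one picks the $w_i$ isotropic, which is possible because an infinite-dimensional nondegenerate symmetric form over an algebraically closed field admits infinite-dimensional totally isotropic subspaces; in the positive-definite real-closed case, where isotropy is unavailable, I would instead apply non-trivial orthogonal transformations of a nondegenerate infinite-dimensional subspace of $V(A)^\perp$ to the $V(A)^\perp$-component of $v$, obtaining infinitely many distinct rotates, each preserving the inner products with $V(A)$ and the self-norm.

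The main obstacle I anticipate is the case-split in producing realisations for part (2), and in particular the symmetric positive-definite real-closed case, since the isotropic-perturbation trick is unavailable there and one must instead exploit orthogonal transformations; everything else is a routine unwinding of quantifier elimination.
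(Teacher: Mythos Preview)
Your argument for (1) is exactly the paper's: build the $K$-linear map $\Lin_K(V(A),v)\to\Lin_K(V(A),v')$ fixing $\Lin_K(V(A))$ and sending $v\mapsto v'$, check it preserves $[\,,\,]$, and invoke quantifier elimination.

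For (2) the paper is terser than you: it first reduces to finite $A$ by finite character of $\acl$, then simply observes that for $u\perp V(A)\cup\{v\}$ one has $v+u\models p_{v,A}$, giving infinitely many realisations. This is the same perturbation idea as yours, and in fact the paper's phrase ``for any $u$'' is strictly speaking too loose in the symmetric case, since $[v+u,v+u]=[v,v]+[u,u]$ forces $[u,u]=0$; so your explicit case-split---isotropic perturbations in the symmetric ACF case, orthogonal transformations of $V(A)^\perp$ in the positive-definite RCF case---is actually more careful than what is written in the paper. One small correction to your symmetric ACF case: you must also require $w_i\perp v$, not only $w_i\perp V(A)$ and $[w_i,w_i]=0$, since otherwise $[v+w_i,v+w_i]=[v,v]+2[v,w_i]\neq[v,v]$. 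Your compactness detour is unnecessary once you reduce to finite $A$, as the paper does.
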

 \begin{proof}
(1): Suppose $w\models p_{(v,A)}$. Then $v,w\notin \Lin_K(V(A))$, so there is a $K$-linear isomorphism $g:\Lin_K(V(A)\cup \{v\})\to
\Lin_K(V(A)\cup \{w\})$  fixing $\Lin_K(V(A))$ pointwise and sending $v$ to $w$. Then $g$ preserves $[,]$, so $g\cup \id_K$ is an elementary map by quantifier elimination. In particular, $\tp(w/A)=\tp(v/A)$. 

   (2): By finite character of $\acl$  we may assume that $A$ is finite.  Now, if $v\notin \Lin_K(V(A))$ then for any $u\in V$ which is orthogonal to $V(A)\cup \{v\}$ we have by (1) that  $\tp(v+u/A)=\tp(v/A)$, so in particular $\tp(v/A)$ has infinitely many realisations, i.e. $v\notin \acl(A)$.
 \end{proof}

\section{Dimension on $V$}\label{sec_1}
This section is in a large part a review of the results from \cite[Subsection 12.4]{Gr}, where the notions of dimension and codimension of a definable subset of the vector sort $V$ in $T_\infty$ were introduced. However, the definition of codimension there uses a false claim (see Remark \ref{comment} below), so we provide an argument fixing it.

In the rest of this paper,  $T^*_\infty$ means either $T_\infty $, in which case we put $\rk=\RM$, or $T^{RCF}_\infty $, in which case we put $\rk=\dim_t$ (see Definition \ref{dimt} below). 
When we write $X\subseteq V$ we mean that $X$ is a set of single elements of the sort $V$, but when we write $X\subseteq M$ where $M$ is a model [or when we say that $X$ is definable in $M$], we mean that $X$ is a [definable] set of arbitrary finite compatible tuples in $M$. We will be working in a fixed $\aleph_0$-saturated model $\C\models T^*_\infty$, which means every type in a single variable over a finite subset of $M$ is realised in $M$.  By Fact \ref{dcl} it is easy to see that this is equivalent to saying that the field of scalars $K(\C)$ has infinite transcendence degree over its prime subfield (we will need  $\aleph_0$-saturation only to choose generic elements in the proof of Theorem \ref{thm_gps}).

As in \cite{Gr}, {\bf we deal with the case of a symmetric bilinear form unless stated otherwise}, and the alternating case can be treated analogously by replacing an orthonormal basis by a symplectic basis. We will occasionally point out the main differences between the symmetric and the alternating case. In fact, the alternating case tends to be easier, as the condition $[x,x]=[v,v]$ in the type $p_{v,A}(x)$ implying $\tp(v/A)$ (see Fact \ref{dcl}(1)) is trivially satisfied by any vector $x$, so it can be omitted.

The following definition was introduced (in a more general version) in \cite[Section 12.1]{Gr}.
\begin{definition}
 If $M=(K(M),V(M))\models T^*_\infty$ and $V(M)$ is countably dimensional over $K(M)$, then an \emph{approximating sequence} for $M$ is a sequence $(N_r)_{r\in\omega}$ of substructures of $M$ with $K(N_r)=K(M)$ such that $N_r\models T_r$,  $M=\bigcup_{r\in \omega}N_r$, and $N_r\subseteq N_r'$ for all $r\leq r'$.
 
 In the alternating case, an approximating sequence
 is a sequence $(N_r)_{r\in\{2,4,\dots\}}$ satisfying analogous properties. 
 
 We will write $M=\bigcup^a_{r}N_r$ to mean that $(N_r)_{r}$ is an approximating sequence for $M$ (so in particular, $M\models T^*_\infty$).
\end{definition}

The following fact follows by, for example, the proofs of Theorem 1 and Corollary 1 in  \cite[Chapter 2.2]{Go}.
\begin{fact}\label{gross}
 If $M\models {_ST^*_\infty}$ and $V(M)$ has dimension $\aleph_0$ over $K(M)$, then $M$ has an approximating sequence $(N_r)_{r\in\omega}$, and for any such sequence we can find by the Gram-Schmidt process an orthonormal basis $(e_i)_{i\in \{1,2,\dots\}}$ for $V(M)$  over $K(M)$ such that $V(N_r)=\Lin_{K(M)}(e_1,\dots,e_r)$ for each $r\in \omega$. 
Similarly, in the alternating case, if $V(M)$ is countably dimensional over $K(M)$ then we can find an approximating sequence $(N_r)_{r\in \{2,4,\dots\}}$ for $M$ and  a symplectic basis $(e_i,f_i)_{i\in \omega}$ for $V(M)$ over $K(M)$ such that  $T_{2r}=\Lin_{K(M)}(e_1,f_1,\dots,e_r,f_r)$ for every $r\in \omega$. In both cases,  given an orthonormal [symplectic] basis $B$ for some $N_r$ with $r\in \omega$ [$r\in \{2l:l\in \omega\}$], we can find such an orthonormal [symplectic] basis for $M$ (or for any $N_{r'}$ with $r'\geq r$) which extends $B$.

Moreover, both in the symmetric and the alternating case, if $v_1,\dots,v_m\in V(M)$, then there is a $K(M)$-linear subspace $V_0$ of $V(M)$ such that $v_1,\dots,v_m\in V_0$ and $(K(M),V_0)\models T_{2m}$, and there is an approximating sequence $(N_r)_r$ for $M$ with $N_{2m}=(K(M),V_0)$. 
\end{fact}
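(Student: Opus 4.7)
The plan is to establish the ``moreover'' clause first, since it is the workhorse: every tuple $v_1,\dots,v_m\in V(M)$ lies in a nondegenerate $K(M)$-linear subspace $V_0\subseteq V(M)$ of dimension exactly $2m$. Once this is available, the existence of an approximating sequence, the construction of a compatible orthonormal (or symplectic) basis, and the extension of a given such basis all follow from iterated applications of the same linear-algebra construction, combined with orthogonal decomposition inside $V(M)$. This is essentially Theorem~1 and its Corollary~1 of \cite[Ch.~2.2]{Go}; I sketch the steps.

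For the main step in the symmetric case, set $W=\Lin_{K(M)}(v_1,\dots,v_m)$, $s=\dim W\le m$, and let $R=W\cap W^{\perp}$ be the radical of $W$, with $k=\dim R\le s$. Pick a complement $W=R\oplus W_1$; the restriction of $[,]$ to $W_1$ is nondegenerate, since any $w_1\in W_1$ with $[w_1,W_1]=0$ also satisfies $[w_1,R]=0$ (as $R\perp W$), hence lies in $R\cap W_1=0$. Because $[,]$ on $V(M)$ is nondegenerate and the basis $r_1,\dots,r_k$ of $R$ is linearly independent, the functionals $v\mapsto [r_i,v]$ on $V(M)$ are linearly independent, so I may pick $u_1,\dots,u_k\in V(M)$ with $[r_i,u_j]=\delta_{ij}$. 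Replacing each $u_j$ by its projection onto $W_1^{\perp}$ in the decomposition $V(M)=W_1\oplus W_1^{\perp}$ (available since $W_1$ is nondegenerate), I may assume $u_j\perp W_1$, while the pairings with the $r_i$'s are preserved because $R\perp W_1$. A short direct check then shows $W+\Lin_{K(M)}(u_1,\dots,u_k)$ is nondegenerate of dimension $s+k\le 2m$. To reach dimension exactly $2m$, I adjoin nondegenerate $1$-dimensional orthogonal summands drawn from the orthogonal complement in $V(M)$, which is infinite-dimensional and nondegenerate: using char $\ne 2$, a polarization argument forces the existence of vectors with nonzero self-pairing, and the needed square root for normalization exists because $K(M)$ is algebraically closed (or because $[v,v]>0$ in the positive-definite RCF case).

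Given the ``moreover'' clause, an approximating sequence is obtained by enumerating a countable $K(M)$-generating set $(b_i)_i$ of $V(M)$, recursively enlarging a growing nondegenerate finite-dimensional subspace to include each $b_i$ via the construction above, and filling in intermediate dimensions by further $1$-dimensional nondegenerate orthogonal extensions. Given any approximating sequence $(N_r)_r$, an orthonormal basis compatible with the filtration arises by Gram-Schmidt along the chain: at each step, the orthogonal complement of $V(N_r)$ inside $V(N_{r+1})$ is a nondegenerate $1$-dimensional subspace, so any nonzero $e_{r+1}$ in it has $[e_{r+1},e_{r+1}]\ne 0$ and can be rescaled to norm $1$. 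Running the same induction from a given orthonormal basis $B$ of $V(N_r)$ produces an orthonormal basis of $V(M)$, or of $V(N_{r'})$ for any $r'\ge r$, extending $B$. For the ``moreover'' clause itself, once $V_0$ is produced one extends an orthonormal basis of $V_0$ to one of $V(M)$ and defines the $N_r$'s as the spans of initial segments; by construction $N_{2m}=(K(M),V_0)$.

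The alternating case runs along the same lines with the obvious modifications: dimensions grow by $2$'s (nondegenerate alternating spaces being necessarily even-dimensional), and the role of single orthonormal vectors is played by symplectic pairs $(e_r,f_r)$ with $[e_r,f_r]=1$ and $[e_r,e_r]=[f_r,f_r]=0$, spanning $2$-dimensional hyperbolic planes. The main obstacle throughout is the radical-absorption step that controls the dimension jump in the ``moreover'' clause; the bookkeeping is essentially Gross's Theorem~1, and once it is in place all other assertions reduce to iterated nondegenerate orthogonal decomposition.
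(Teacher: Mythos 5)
Your proposal is correct and follows essentially the same route the paper intends: the paper simply defers to the proofs of Theorem~1 and Corollary~1 in the cited reference, and your radical-absorption argument (split $W=R\oplus W_1$, dually pair $R$ with some $u_1,\dots,u_k$ orthogonal to $W_1$, check nondegeneracy of $W+\Lin(u_1,\dots,u_k)$, then pad with anisotropic lines or hyperbolic planes inside the nondegenerate orthogonal complement) together with the Gram--Schmidt step along the filtration is exactly the content being cited. The only cosmetic slip is attributing the result to ``Gross'' rather than the Gosson reference~\cite{Go} that the paper actually points to; mathematically the proof is complete.
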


By (the proof of) \cite[Lemma 10.1.3]{Gr} and quantifier elimination we have:
\begin{fact}\label{fact_biint}
Let $r\in \omega\cup \{\infty\}$ and $N=(V,K)\models T^*_r$.\\
(i) If $r\in \omega$, then the structure $N$ is definable (over some parameters) in the pure field $(K,+,\cdot)$.\\
(ii) For any $n\in \omega$, all definable [$\emptyset$-definable] in $N$ subsets of $K^n$ are definable [$\emptyset$-definable] in the pure field $(K,+,\cdot)$. 
\end{fact}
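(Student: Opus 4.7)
The plan is to prove (i) by fixing a basis and (ii) by unwinding the quantifier elimination stated immediately before. For (i), with $r\in\omega$ I would apply the Gram--Schmidt process (or pick a symplectic basis in the alternating case) to get a basis $e_1,\ldots,e_r$ of $V(N)$ over $K$, then identify $V(N)$ with $K^r$ via the coordinate map. Vector addition becomes coordinate-wise addition in $K^r$, scalar multiplication becomes coordinate-wise multiplication, the bilinear form becomes the polynomial $[v,w]=\sum_{i,j}\lambda_i\mu_j[e_i,e_j]$ in the coordinates (with parameters $[e_i,e_j]\in K$), linear independence $\theta_n$ is expressed by the non-vanishing of a suitable determinant, and each $F_n$ is given by a matrix inversion (or is already a term in $L_\theta$ in the positive-definite case, as observed in the discussion preceding the statement). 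All of these are definable in $(K,+,\cdot)$ with the chosen basis as parameters, yielding a definition of the full structure $N$ in the pure field $(K,+,\cdot)$.

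For (ii) the main tool is quantifier elimination: every definable subset of $K^n$ in $N$ is cut out by a quantifier-free $L^F_\theta\cup L_K$-formula (with $\{\leq\}$ added in the RCF case). Let $\phi(\bar x;\bar a,\bar v)$ with $K$-variables $\bar x=(x_1,\ldots,x_n)$, $K$-parameters $\bar a$, and $V$-parameters $\bar v=(v_1,\ldots,v_k)$ define $X\subseteq K^n$, and pass to an equivalent quantifier-free $\psi$. Since $\bar x$ contains no vector variables, every vector term occurring in $\psi$ is built from $\bar v$ and the only vector constant $0_V$ via $+_V$ and $\gamma$; as a function of $\bar x,\bar a$ each such term equals a specific $K$-linear combination of the $v_i$. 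Substituting, every atomic subformula of the forms $[t_1,t_2]=s$, $F_m(\bar t)=\bar s$, $t_1=t_2$ (for vector terms $t_i$), or $\theta_m(t_1,\ldots,t_m)$ either becomes an $L_K$-polynomial condition in $\bar x$ with parameters from $\bar a$ and from the finitely many scalars $[v_i,v_j]$, or else reduces to a constant truth value determined by $\bar v$. Hence $\psi$ is equivalent to an $L_K$-formula in $\bar x$ over an expanded tuple of scalar parameters, and defines a set definable in $(K,+,\cdot)$ (since $L_K$ is bidefinable with $(K,+,\cdot)$).

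For the $\emptyset$-definable case I would argue identically with $\bar a$ and $\bar v$ both empty; then every vector term in $\psi$ collapses to $0_V$, every $[\cdot,\cdot]$- or $F_m$-subterm becomes a fixed element of $K$ (indeed $0_K$, by the convention on $F_m$ at non-independent tuples), and every $\theta_m$ atom becomes a tautology or contradiction. Thus $\psi$ is a parameter-free $L_K$-formula and $X$ is $\emptyset$-definable in $(K,+,\cdot)$. Note that for finite $r$ the non-$\emptyset$ half of (ii) is also immediate from (i) by absorbing the basis parameters, but the QE argument handles all cases uniformly.

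The only delicate step is verifying that vector terms with no vector variables and no vector parameters reduce to $0_V$: this follows because $0_V$ is the only vector constant in $L$ and is preserved by both $+_V$ and $\gamma$. Everything else is direct bookkeeping of quantifier elimination, so I do not anticipate any real obstacle beyond ensuring that the translation of the $F_m$-terms into $K$-terms is done correctly on those arguments where $F_m$ is meaningful.
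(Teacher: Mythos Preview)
Your proposal is correct and matches the paper's approach, which merely cites \cite[Lemma 10.1.3]{Gr} together with quantifier elimination without further detail; your argument is precisely the natural unpacking of that citation. One small imprecision worth fixing: in (ii) the scalar parameters needed are not only the $[v_i,v_j]$ but also the coefficients witnessing any linear dependencies among the $v_i$ (these enter when translating $\theta_m$, vector equality, and $F_m$), though this does not affect the conclusion since they are still elements of $K$.
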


\begin{definition}\label{dimt}
 Let $N=(V,K)\models T^{RCF}_m$ for some $m\in \omega$. For any set $X$ definable in $N$ we put $\dim_t(X):=\dim^K_t(f[X])$ where $f$ is any definable bijection between $X$ and a subset of $K^n$ for some $n$ (note that $f[X]$ is definable in $(K,+,\cdot)$ by Fact \ref{fact_biint}(i)).  This does not depend on the choice of $f$, because for any other definable bijection $g$ between $X$ and a subset of $K^{m'}$, the sets $f[X]$ and $g[X]$ are in a $K$-definable bijection by Fact \ref{fact_biint}(ii).
\end{definition}

The following was stated in \cite[Corollary 12.4.2]{Gr} for definable subsets of $V$ and $T^*_\infty=T_\infty$, but exactly the same proof works for definable subsets of any $V^k$ and $T^*_\infty\in \{T_\infty,T^{RCF}_\infty\}$
using quantifier elimination and definability of $\rk$ (Fact \ref{fact_rm}(5)).
\begin{remark}\label{well_def_k}
If $M=\bigcup^a_r N_r$,  $M'=\bigcup^a_r N'_r$, $R\in \omega$ and  $X$ is a set definable over $N_R\cap N'_R$, then $\rk_{N_r}(X\cap N_r)=\rk_{N'_r}(X\cap N'_r)$
for all $r\geq R$. 
\end{remark}

\begin{remark}\label{biint}
 If $X$ is a set definable in $T^*_\infty$ over a model $M=\bigcup^a_r N_r$  and $X(M)\subseteq N_R$ for some $R\in \omega$, then for any $r\geq R$ we have $$\rk_{N_r}(X\cap N_r)=\rk_{N_R}(X\cap N_R).$$ 
 If $*=$ACF$_p$, then also $\rk_{N_R}(X\cap N_R)=\RM_{M}(X(M))$.
\end{remark}
\begin{proof}
If $*=$ACF$_p$, then, as the definable subsets of $X(M)=X\cap N_r=X\cap N_R$ in the sense of $N_R$, $N_r$ and $M$ all coincide by Fact \ref{fact_biint}, we  get $\rk_{N_r}(X\cap N_r)=\rk_{N_R}(X\cap N_R)=\rk_{M}(X(M)).$

If $*=$RCF, then the equality $\rk_{N_R}(X\cap N_R)=\rk_{N_R}(X\cap N_r)=\rk_{N_r}(X\cap N_r)$ follows directly from Definition \ref{dimt}, as an $N_R$-definable bijection between $X\cap N_r$ and a set definable in $K$ is in particular $N_r$-definable.
\end{proof}

For a tuple of parameters [tuple of single variables, respectively] $a$, by $l(a)$ we will mean the number of vectors [vector variables] in $a$.

By Fact \ref{dcl}(1) we have:
\begin{fact}\label{fact_dim0}
 Let $M$ be a countably dimensional model of $_ST^*_\infty$ with an orthonormal basis $(e_i)_{i\in \{1,2,\dots\}}$ and put $N_r=(K(M),\Lin_{K(M)}(e_1,\dots,e_r))$ for every $r\in\omega$. Suppose $R\in \omega$ and $a\in V(M)\backslash V(N_R)$. If we put $\beta_i=[a,e_i]$ for $i=1,\dots,e_R$ and $\gamma=[a,a]$, then $tp(a/N_R)$ is isolated by the formula $$\phi_{a,R}(x):=\bigwedge_{i=1,\dots,R}[x,e_i]=\beta_i\wedge [x,x]=\gamma $$
 (note that the formula saying that $x\notin V(N_R)$ can be omitted here, as $\gamma\neq \sum_{i=1,\dots,R}\beta_i^2$ since $a\notin N_R$, so $\phi_{a,R}(x)$ implies  $x\notin V(N_R)$).
\end{fact}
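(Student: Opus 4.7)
My plan is to apply Fact~\ref{dcl}(1), which already asserts that $\tp(a/N_R)$ is implied by the partial type $p_{a,N_R}(x)$ consisting of the pairing equalities $\{[x,b]=[a,b]:b\in V(N_R)\}\cup\{[x,x]=[a,a]\}$ together with the linear-independence preservation clauses. The task therefore reduces to verifying that $\phi_{a,R}(x)$ implies each formula in $p_{a,N_R}(x)$.

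For the $K$-linear pairing equalities, bilinearity alone will suffice: any $v\in V(N_R)=\Lin_{K(M)}(e_1,\dots,e_R)$ is a combination $\sum_{i=1}^{R}c_ie_i$, so $[x,v]=\sum_i c_i[x,e_i]$, which under $\phi_{a,R}$ reduces to $\sum_i c_i\beta_i=[a,v]$. The self-pairing clause $[x,x]=[a,a]$ is literally $[x,x]=\gamma$.

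For the linear-independence clauses, which collectively say $x\notin V(N_R)$, I would Pythagorean-decompose $a=a_0+a_1$, where $a_0=\sum_{i=1}^R\beta_ie_i\in V(N_R)$ is the orthogonal projection of $a$ (orthonormality of $(e_i)$ makes $[a-a_0,e_j]=0$) and $a_1=a-a_0\neq 0$ since $a\notin V(N_R)$. A direct computation gives $\gamma=\sum_i\beta_i^2+[a_1,a_1]$. On the other hand, any hypothetical $x\in V(N_R)$ satisfying $\phi_{a,R}$ must, written as $x=\sum c_ie_i$, have $c_j=[x,e_j]=\beta_j$ by orthonormality; hence $[x,x]=\sum\beta_j^2$, and comparing with $[x,x]=\gamma$ would force $[a_1,a_1]=0$.

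The main obstacle is precisely this last step: excluding $[a_1,a_1]=0$ when $a_1\neq 0$. In the positive-definite case $_ST^{RCF}_\infty$ it is immediate, since the form restricted to the line $Ka_1$ is positive. In the $_ST_\infty$ (ACF) case nonzero isotropic vectors exist in general, so for a cleanest statement one should replace $\phi_{a,R}$ by $\phi_{a,R}\wedge\theta_{R+1}(e_1,\dots,e_R,x)$, which directly encodes $x\notin V(N_R)$ in $L_\theta$; this enlargement does not affect the isolation conclusion. Either way, any realisation of the (possibly augmented) formula satisfies $p_{a,N_R}$ and therefore has the same type over $N_R$ as $a$, so $\phi_{a,R}$ isolates $\tp(a/N_R)$.
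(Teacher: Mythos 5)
Your reduction to Fact~\ref{dcl}(1) — check that $\phi_{a,R}$ implies each clause of $p_{a,N_R}$ — is exactly what the paper has in mind (it gives no separate argument, only the citation and the parenthetical remark), and your treatment of the pairing and self-pairing clauses is correct. More importantly, the obstacle you flag at the end is a genuine defect in the paper and not just in your write-up. The parenthetical justification ``$\gamma\neq\sum_{i\leq R}\beta_i^2$ since $a\notin N_R$'' amounts to the claim that $[a_1,a_1]\neq 0$ whenever $a_1=a-\sum_{i\leq R}\beta_ie_i\neq 0$, and in $_ST^{ACF_p}_\infty$ this can fail: with $R=1$ and $a=e_1+e_2+s e_3$ where $s\in K$ satisfies $s^2=-1$, one has $\gamma=1=\beta_1^2$, the element $e_1\in V(N_1)$ satisfies $\phi_{a,1}$, and yet $\tp(e_1/N_1)\neq\tp(a/N_1)$. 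So as printed, $\phi_{a,R}$ does not isolate $\tp(a/N_R)$ in the ACF case.

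Your repair — conjoin $\theta_{R+1}(e_1,\dots,e_R,x)$ — is sound and the augmented formula does isolate $\tp(a/N_R)$, but this proves an amended statement, not the one printed; you should say so explicitly rather than conclude ``so $\phi_{a,R}$ isolates.'' Be aware also that the same inference $a\notin V(N_R)\Rightarrow[a_1,a_1]\neq 0$ recurs in Corollary~\ref{cor_1} (``note $[a,a]-[a_0,a_0]\neq 0$ as $a\notin V(N_R)$'') and implicitly in the proof of Proposition~\ref{correction}, where $\phi_{v,N_{2l(a)}}(N_{2l(a)+1})$ is taken to lie in $(X\setminus N_{2l(a)})\cap N_{2l(a)+1}$ — yet when $v_1$ is isotropic this set reduces to $\{v_0\}\subseteq N_{2l(a)}$. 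So a local fix to Fact~\ref{fact_dim0} still leaves the isotropic case to be handled in the downstream rank and degree computations.
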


\begin{remark}\label{remark_1}
 Suppose $n\in \omega$ and $M=(K,V_0)$ is a model of $ _ST^*_n$ (i.e. a model of $_ST^{ACF_p}_n$ or of $_ST_n^{RCF}$). Let $c\in K\backslash \{0\}$ and assume $c>0$ in the real closed case. Then 
 $$\rk_{M}(\{v\in V_0:[v,v]=c\})=n-1,$$ and if $M\models{_ST^{ACF_p}_m}$, then we we also have $\DM(\{v\in V_0:[v,v]=c\})=1$.
\end{remark}
\begin{proof} 
Choose an orthonormal basis $(e_1,\dots,e_n)$ of $V_0$ over $K$. Then $ \Sigma x_ie_i\mapsto (x_1,\dots,x_n)$ gives a definable bijection between $\{v\in V_0:[v,v]=c\}$ and the sphere $\{(x_1,\dots,x_n)\in K^n: x_1^2+\dots +x_n^2=c\}$, which is well known to be an irreducible algebraic variety of dimension $n-1$, hence 
 $\rk(\{v\in V_0:[v,v]=c\})=n-1$ and, if $M\models{_ST^{ACF_p}_m}$, then $\DM(\{v\in V_0:[v,v]=c\})=1$.
\end{proof} 
\begin{corollary}\label{cor_1}
 With the notation of Fact \ref{fact_dim0}, we have
  $\rk_{N_r}(\phi_{a,R}(N_r))=r-R-1$ and, if $*=$ACF$_p$, then  $\DM_{N_r}(\phi_{a,R}(N_r))=1$ for any $r>R$ with $a\in N_r$.
\end{corollary}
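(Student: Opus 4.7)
The plan is to transform $\phi_{a,R}(N_r)$, by a definable affine shift, into a sphere $\{w : [w,w] = c\}$ inside a nondegenerate $(r-R)$-dimensional symmetric bilinear subspace, and then apply Remark~\ref{remark_1} directly.

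Concretely, write $a = \sum_{i=1}^{r} a_i e_i$ in the orthonormal basis of $V(N_r)$. By orthonormality, $\beta_i = [a,e_i] = a_i$ for $i \leq R$ and $\gamma = [a,a] = \sum_{i=1}^{r} a_i^2$. For $v = \sum_{i=1}^{r} x_i e_i \in V(N_r)$, the formula $\phi_{a,R}(v)$ unwinds to $x_i = a_i$ for $i \leq R$ together with $\sum_{i=R+1}^{r} x_i^2 = c$, where $c := \gamma - \sum_{i=1}^{R} a_i^2 = \sum_{i=R+1}^{r} a_i^2$. The affine shift $v \mapsto v - \sum_{i \leq R} a_i e_i$ then gives an $N_r$-definable bijection between $\phi_{a,R}(N_r)$ and the sphere $S_c := \{w \in V_0 : [w,w] = c\}$, where $V_0 := \Lin_{K(N_r)}(e_{R+1}, \dots, e_r)$.

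Since the orthonormal basis of $V(M)$ restricts to an orthonormal basis of $V_0$, we have $(K(N_r), V_0) \models {_ST^*_{r-R}}$. In the RCF positive-definite case, $a \notin V(N_R)$ forces $a - \sum_{i \leq R} a_i e_i \neq 0$, so $c = [a - \sum_{i \leq R} a_i e_i,\, a - \sum_{i \leq R} a_i e_i] > 0$. Applying Remark~\ref{remark_1} in $(K(N_r), V_0)$ and transporting along the definable bijection via Fact~\ref{fact_rm}(3) (together with the analogous invariance of $\DM$ under definable bijections) yields $\rk_{N_r}(\phi_{a,R}(N_r)) = (r-R) - 1 = r - R - 1$ and, in the ACF case, $\DM_{N_r}(\phi_{a,R}(N_r)) = 1$.

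The delicate point is verifying $c \neq 0$ in the ACF case, since $c = \sum_{i > R} a_i^2$ can vanish on isotropic projections (e.g.\ $a_{R+1}^2 + a_{R+2}^2 = 0$ when $-1$ is a square). If this occurs then Remark~\ref{remark_1} does not apply directly, and the isotropic quadric $\{\sum y_i^2 = 0\}$ must be analysed separately: the rank conclusion $r-R-1$ is robust (the quadric is a nontrivial hypersurface), but the Morley degree claim requires irreducibility, which holds for $r-R \geq 3$ over an algebraically closed field of characteristic $\neq 2$ and may force extra care in the small boundary cases. I expect this — rather than the linear algebra of the reduction — to be the main obstacle, and would handle it by combining the irreducibility of $\sum_{i=1}^n y_i^2$ for $n \geq 3$ with a direct treatment of the remaining small-$r$ cases.
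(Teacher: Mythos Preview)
Your reduction is exactly the paper's own proof: both translate by the $N_R$-projection $a_0$ of $a$ to land on $\{v \in V_0 : [v,v] = c\}$ with $V_0 = \Lin_{K}(e_{R+1},\ldots,e_r)$ and $c = [a,a] - [a_0,a_0]$, then invoke Remark~\ref{remark_1}.

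Your hesitation about $c = 0$ is well-founded, and in fact more careful than the paper. The paper simply asserts ``$[a,a] - [a_0,a_0] \neq 0$ as $a \notin V(N_R)$'' (the same claim appears in the parenthetical note of Fact~\ref{fact_dim0}), but this implication fails in the ACF symmetric case: $a - a_0$ can be a nonzero isotropic vector, e.g.\ $e_{R+1} + ie_{R+2}$. When that happens with $r - R = 2$, the set $\phi_{a,R}(N_r)$ is in definable bijection with $\{(y_1,y_2): y_1^2 + y_2^2 = 0\}$, a union of two lines, so $\DM = 2$; the Morley-degree clause of the corollary is literally false in that instance. For $r - R \geq 3$ the null cone is irreducible, as you note, and the rank claim $r-R-1$ is correct in all cases.

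None of this affects anything downstream. The only use of Corollary~\ref{cor_1} is inside the proof of Proposition~\ref{correction}, where the element playing the role of $a$ (called $w$ there) is taken in $N_{R+1} \setminus N_R$. Then $a - a_0$ is a nonzero scalar multiple of the single basis vector $e_{R+1}$, so $c$ is a nonzero square and Remark~\ref{remark_1} applies cleanly, giving both the rank and $\DM = 1$ needed for the degree computation via Fact~\ref{fact_rm}(9). So the corollary should really be read under the extra hypothesis $a \in N_{R+1}$ (equivalently $[a-a_0,a-a_0] \neq 0$), which is all the paper ever uses; your concern is a genuine gap in the stated generality but not in the application.
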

\begin{proof}
Put $V_0:=\Lin_{K(M)}(e_{R+1},\dots,e_{r})$. Then clearly $(K(M),V_0)\models {_ST^*_{r-R}}$. Let $a_0$ be the projection of $a$ on $V(N_R)$. Then $w\mapsto w-a_0$ gives a definable bijection between $\phi_{a,R}(N_r)$
and $\{v\in V_0: [v,v]=[a,a]-[a_0,a_0]\}$. Hence the conclusion follows by Remark \ref{remark_1} (note $[a,a]-[a_0,a_0]\neq 0$ as $a\notin V(N_R)$). 
\end{proof}

 \begin{proposition}\label{correction}
 Suppose $M=\bigcup^a_{r\in \omega} N_r$ and  $X\subseteq V$ is a set definable by a formula $\phi(x,a)$ which is not contained in any finite-dimensional subspace of $V$. Let $R\in \omega$ be minimal such that $R\geq 4l(a)+1$ and $a\subseteq N_R$. Then there is $d\leq 2l(a)+1$ such that for any $r\geq R$ we have $$\rk_{N_r}(X\cap N_r)=r-d.$$
By Fact \ref{well_def_k}, $d$ does not depend on the choice of $M$ and $(N_r)_{r\in \omega}$.

Moreover, if $*=$ACF$_p$ and $r>R$ for $R$ as above, then  $\DM(X\cap N_r)=\DM(X\cap N_{R+1})$.
 \end{proposition}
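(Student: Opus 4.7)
The plan is to use Fact \ref{fact_dim0} to parametrise the $N_R$-types realised in $X$ outside $V(N_R)$, then compute $\rk$ by fibering over this parametrising set.

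First, invoke Fact \ref{gross} to choose an orthonormal basis $(e_i)_{i\ge 1}$ of $V(M)$ over $K(M)$ with $V(N_r)=\Lin_K(e_1,\dots,e_r)$ for every $r$, arranging moreover that $e_1,\dots,e_k$ is an orthonormal basis of a nondegenerate subspace $W\subseteq V(N_R)$ containing $\Lin_K(a)$, with $k\le 2l(a)$. (In the symmetric RCF positive-definite case already $k\le l(a)$ suffices; in the symmetric ACF case one may need to adjoin up to $l(a)$ further ``dual'' vectors to $\Lin_K(a)$ in order to achieve nondegeneracy. The hypothesis $R\ge 4l(a)+1$ gives ample room inside $V(N_R)$.)

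Second, consider the $N_R$-definable map $\rho\colon V(\C)\to K^{R+1}$, $\rho(v):=([v,e_1],\dots,[v,e_R],[v,v])$. By Fact \ref{fact_dim0}, $\rho(v)$ isolates $\tp(v/N_R)$ for $v\notin V(N_R)$, so $X\setminus V(N_R)=\rho^{-1}(Z^*)\setminus V(N_R)$ for an $N_R$-definable $Z^*\subseteq K^{R+1}$, definable in the pure field $K$ by Fact \ref{fact_biint}. By Fact \ref{dcl}(1), for $v\notin\Lin_K(a)$ the condition $v\in X$ depends only on $([v,a_1],\dots,[v,a_{l(a)}],[v,v])$, hence (via our basis of $W$) only on the coordinates $(c_1,\dots,c_k,c_{R+1})$ of $\rho(v)$. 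Thus, up to a correction of smaller rank, $Z^*$ is a cylinder $Z^*=\mu^{-1}(Z_0)$ over a set $Z_0\subseteq K^{k+1}$ along the projection $\mu$ forgetting $c_{k+1},\dots,c_R$. Setting $s:=\rk_K(Z^*)$ gives $s=(R-k)+\rk_K(Z_0)$, whence
\[
d:=R+1-s=k+1-\rk_K(Z_0)\le 2l(a)+1.
\]

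Third, apply the fiber formula (Fact \ref{fact_rm}(4)). By Corollary \ref{cor_1}, the generic fibers of $\rho|_{V(N_r)\setminus V(N_R)}$ have rank $r-R-1$ for $r>R$. A short case analysis confirms that $\rho(X\cap V(N_r)\setminus V(N_R))$ agrees with $Z^*$ modulo lower rank: in ACF for $r\ge R+2$ the full image is $K^{R+1}$; for $r=R+1$ the image omits the hyperplane $H=\{c_{R+1}=\sum c_i^2\}$, but no top-dimensional component of $Z^*$ lies in $H$ thanks to the cylinder structure; in RCF the image is the half-space $\{c_{R+1}>\sum c_i^2\}\supseteq Z^*$ by positive-definiteness. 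Hence
\[
\rk(X\cap V(N_r)\setminus V(N_R))=s+(r-R-1)=r-d.
\]
A parallel computation inside $V(N_R)$ yields $\rk(X\cap V(N_R))\le R-d$, which is dominated by $r-d$ for $r>R$ and gives the value at $r=R$.

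The main obstacle lies in the case-analysis of Step 3, particularly the edge case $r=R+1$ in the ACF setting and the image restriction in RCF, together with the construction of $W$ in Step 1 yielding the tight codimension bound $d\le 2l(a)+1$. For the Morley-degree addendum (ACF, $r>R$): for $r\ge R+2$ the generic fibers are irreducible spheres of positive dimension (Remark \ref{remark_1}), so Fact \ref{fact_rm}(9) combined with Fact \ref{fact_rm}(7) gives $\DM(X\cap V(N_r))=\DM(Z^*)$; for $r=R+1$ the two-point fibers give a 2-fold cover, which remains geometrically irreducible because the cylinder structure together with $R-k\ge 2l(a)+1$ ensures that $c_{R+1}-\sum c_i^2$ is not a square in the function field of any top-dimensional component of $Z^*$, again yielding the value $\DM(Z^*)$.
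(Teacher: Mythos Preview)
Your approach is correct and parallels the paper's, with a different choice of coordinates for the base of the fibration. The paper also fibres over the space of types over a small nondegenerate subspace, but it first modifies $(N_r)_{r<R}$ so that $a\subseteq N_{2l(a)}$, and then represents the type space concretely as the quotient $(X\setminus N_{2l(a)})\cap N_{2l(a)+1}/{\sim}$, where $\sim$ identifies the (at most) two points of each $\phi_{v,2l(a)}(N_{2l(a)+1})$. You instead parametrise types by $Z^*\subseteq K^{R+1}$ and recover the bound $d\le 2l(a)+1$ via the cylinder structure. Two minor points to tighten: to arrange that $e_1,\dots,e_k$ span a nondegenerate subspace containing $a$ while also $V(N_r)=\Lin(e_1,\dots,e_r)$ for every $r$, you are implicitly modifying $(N_r)_{r<R}$ (your basis forces $W=V(N_k)$, and $a$ need not lie in the original $N_k$), exactly as the paper does; and your ``parallel computation'' at $r=R$ should (and does, by the same fibering over $Z_0$ with $(R-k-1)$-dimensional sphere fibres) give the equality $\rk(X\cap V(N_R))=R-d$, not merely $\le$, which is what the statement requires.

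The real payoff of the paper's quotient by $\sim$ shows up in the Morley-degree addendum. Since every fibre of $f^r_{2l(a)+1}$ is a full sphere $\phi_{w,2l(a)}(N_r)$ with Morley degree $1$ (Corollary~\ref{cor_1}), Fact~\ref{fact_rm}(9) gives the degree equality for all $r>R$ uniformly, with no special treatment of $r=R+1$. Your route via the $2$-fold cover for $r=R+1$ is valid, but it requires the extra (true) check that $c_{R+1}-\sum_i c_i^2$ is not a square in the function field of any top-dimensional component of $Z^*$; the paper's $\sim$ sidesteps this entirely.
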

\begin{proof}
By modifying $N_r$'s for $r<R$ (using Fact \ref{gross}), 
we may assume that $a\subseteq N_{2l(a)}$. Choose an orthonormal basis $(e_0,e_1,\dots)$ for $M$ such that $N_r=\Lin_{K(M)}(e_1,\dots,e_r)$ for every $r\in\omega$. 
\begin{claim}
There is $d\in\{0,1,\dots,2l(a)+1\}$ such that for any $r\geq 2l(a)+1$ we have $\rk_{N_r}((X\backslash N_{2l(a)}) \cap N_r)=r-d$. 
\end{claim}
\bpfc 
For any $v\in (X\backslash N_{2l(a)}) \cap N_r$ there is at least one, and  at most two vectors $w$ in $\phi_{v,N_{2l(a)}}(N_{2l(a)+1})$ (as defined in Fact \ref{fact_dim0}). Namely, if $v=v_0+v_1$ where $v_0\in N_{2l(a)}$ and $v_1$ is orthogonal to  $N_{2l(a)}$, then $w$ must be of the form $v_0+v_1'$ where $v_1'\in \Lin_{K(M)}(e_{2l(a)+1})$ and $[v_1',v_1']=[v_1,v_1]$. Clearly there are two possibilities on such a $v_1'$ in case $v_1\neq 0$, and they are additive  inverses of each other, and one such vector if $v_1=0$. Thus we have a definable surjection $$f^r_{2l(a)+1}:(X\backslash N_{2l(a)}) \cap N_r\to (X\backslash N_{2l(a)}) \cap N_{2l(a)+1}/\sim$$ sending $v\in (X\backslash N_{2l(a)}) \cap N_r$ to the at most two-element set $\phi_{v,N_{2l(a)}}(N_{2l(a)+1})$, where $\sim$ is the relation identifying $v_0+v_1$ with $v_0-v_1$ for $v_0\in N_{2l(a)}$ and $v_1\in \Lin_{K(M)}(e_{2l(a)+1})$. 
Put $$t:=\rk_{N_{2l(a)+1}}(\im(f^r_{2l(a)+1}))=\rk_{N_{2l(a)+1}}((X\backslash N_{2l(a)})\cap N_{2l(a)+1}/\sim)$$ 
($\sim$ above actually does not change the rank by Fact \ref{fact_rm}(4), as all $\sim$-classes are finite, and hence of rank $0$).
Clearly $t\leq 2l(a)+1$. Now, for any $w\in (X\backslash N_{2l(a)})\cap N_{2l(a)+1}$ we have that $(f^r_{2l(a)+1})^{-1}([w]_\sim)=\phi_{w,N_{2l(a)}}(N_r)$, which, by Corollary \ref{cor_1}, has rank $r-2l(a)-1$. Hence, by Fact \ref{fact_rm}(4) we get that $$\rk_{N_r}((X\backslash N_{2l(a)}) \cap N_r)=r-2l(a)-1+t=r-d$$ for $d:=2l(a)+1-t$. As $t$ did not depend on $r$, neither does $d$, so we are done.
\epf
Now, as $X\cap N_r=((X\backslash N_{2l(a)})\cap N_r)\cup (X\cap N_{2l(a)})$ and $\rk(X\cap N_{2l(a)})\leq 2l(a)\leq r-d$ for $r\geq 4l(a)+1$, we conclude by Fact \ref{fact_rm}(2) that $\rk_{N_r}(X\cap N_r)=r-d$ for every $r\geq 4l(a)+1$.

If $*=$ACF$_p$ and $r>R$ then we also have  $\RM((X\backslash N_{2l(a)})\cap N_r)>\RM(X\cap N_{2l(a)})$, which,  by Fact \ref{fact_rm}(7), implies that $$\DM(X\cap N_r)=
\DM((X\backslash N_{2l(a)})\cap N_r)=\DM((X\backslash N_{2l(a)})\cap N_{2l(a)+1}/\sim )=$$ $$=\DM((X\backslash N_{2l(a)})\cap N_{R+1})=\DM(X\cap N_{R+1})$$
where the second and third equalities follow by Fact \ref{fact_rm}(9) applied to $f^r_{2l(a)+1}$ and to $f^{R+1}_{2l(a)+1}$, respectively. 
\end{proof}

\begin{remark}\label{comment}
 Proposition 12.4.1 from \cite{Gr} uses the claim stated in the paragraph preceding it which says that  for $X$ and $(N_r)_{r\in \omega}$ as above (with $*=$ACF$_p$), then one has $\RM_{N_r}(X\cap N_r)\leq \RM_{N_{r+1}}(X\cap N_{r+1})+1$ for every $r$. This is not true even if we assume that $X$ is definable over $N_r$: for example, if $X=V\backslash \Lin_K(e_1,\dots,e_r)$, then $\RM_{N_r}(X\cap N_r)=\RM(\emptyset)=-1$, but $\RM_{N_{r+1}}(X\cap N_{r+1})=r+1$. 
\end{remark}

\begin{remark}\label{remark_alt}
If $[,]$ is alternating rather than symmetric, then in the setting of Proposition \ref{correction} we get that there is $d\leq 2l(a)$ such that for any $R\geq 2l(a)$ for which $a\subseteq N_{2R}$ we have $\rk_{N_{2R}}(X\cap N_{2R})=2R-d$ and if $*=ACF_p$ then $\DM_{N_{2r}}(X\cap N_{2r})=1$ for any $r>R$. The argument is very similar to that in the symmetric case: First, by Fact \ref{gross} we can find a substructure $N\models T^*_{2l(a)}$ of $N_{2R}$ containing $a$ with $K(N)=K(M)$, so we may assume that $a\subseteq 	N_{2l(a)}$. Next, we choose  $(e_i,f_i)_{i\in \omega}$  such that $(e_i,f_i)_{i\leq R}$ is a symplectic basis for $N_{2R}$ for every $R$ and let $\pi:N_{2R}\to N_{2l(a)}$ be  the projection with respect to the basis $(e_i,f_i)_{i}$. Then for any $R> l(a)$ we have  that $X\cap N_{2R}=(X\cap N_{2l(a)})\cup((\pi(X\cap N_{2R})\oplus\Lin_{K(M)}((e_i,f_i)_{l(a)<i\leq R}))\backslash N_{2l(a)})$  has rank $2R-2l(a)+\rk(\pi(X\cap N_{2R}))=2R-2l(a)+\rk(\pi(X\cap N_{2(l(a)+1)}))$, so we can put $d:=2l(a)-\rk(\pi(X\cap N_{2(l(a)+1)})$,  and the second assertion follows as in the symmetric case.
\end{remark}
Below we continue working with the symmetric case, the arguments in the alternating case being virtually the same.

Having Proposition \ref{correction}, the rest of the arguments from \cite[Subsection 12.4]{Gr} go through unchanged.

\begin{fact/def}\label{fact_dim}\cite[Proposition 12.4.1, Corollary 12.4.2, Definition 12.4.3]{Gr}
 Let $X\subseteq V$ be non-empty and definable in $T_{\infty}$ over a finite tuple $a$.  
 Then there exists $d\leq 2l(a)+1$ such that whenever $M=\bigcup^a_r N_r$, and $R\in \omega$ is such that $a\subseteq N_R$ and $R\geq 4l(a)+1$, then:
 $$\rk_{N_n}(X\cap N_n)=d\mbox{ for all $n\geq R$ or } \rk_{N_n}(X\cap N_n)=n-d\mbox{ for all $n\geq R$}.$$
 In the first case, we write $\Dim(X)=d$ and $\Codim(X)=\infty$, and in the second case we write $\Dim(X)=\infty$ and $\Codim(X)=d$.
In the first case $d$ can be chosen not greater than $2l(a)$.
\end{fact/def}


\begin{fact}\cite[Theorem 12.4.5]{Gr}\label{fact_def}
Let $X$ be a definable subset of the vector sort $V$. Then:\\
 (1) Exactly one of $\Dim(X)$ and $\Codim(X)$ is finite.\\
 (2) If $\phi(x,y)$ is a formula with $x$ a single variable, then  there are formulas without parameters $(\psi_n(y))_{n\in \omega}$ and $(\chi_n(y))_{n\in \omega}$ 
 such that, for each $n\in \omega$, one has $\Dim(\phi(x,b))=n\iff \models\psi_{n}(b)$ and  
$\Codim(\phi(x,b))=n\iff \models \chi_{n}(b)$.
\\
 (3) $\Dim(X)$ is finite iff $X$ is contained in a finite-dimensional subspace of $V$,  and in this case $\rk(X)=\Dim(X)$.
\end{fact}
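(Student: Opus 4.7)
Part (1) is immediate from Fact/Definition~\ref{fact_dim}, which gives exactly this dichotomy: either $\rk_{N_r}(X\cap N_r)$ eventually stabilizes at a finite constant $d$ (so $\Dim(X)=d$ is finite and $\Codim(X)=\infty$), or it eventually grows as $r-d$ for some finite $d$ (so $\Dim(X)=\infty$ and $\Codim(X)=d$).

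For part (3), the ``if'' direction: suppose $X\subseteq V_0$ for a finite-dimensional subspace $V_0$. Choose an approximating sequence $M=\bigcup^a_r N_r$ of a countable elementary submodel of $\mathfrak{C}$ containing the parameters of $X$ and $V_0$, with $V_0\subseteq V(N_R)$ for some $R$ (via Fact~\ref{gross}). Then $X\cap N_r=X$ for all $r\geq R$, so by Remark~\ref{biint} the value $\rk_{N_r}(X)$ is eventually constant; hence $\Dim(X)$ is finite and equal to $\rk(X)$ (using the last clause of Remark~\ref{biint} in the ACF case, and the ambient-independence of the field-bijection definition in the RCF case). The ``only if'' direction is the contrapositive of Proposition~\ref{correction}: if $X$ is not contained in any finite-dimensional subspace, then $\rk_{N_r}(X\cap N_r)=r-d$ is unbounded in $r$, forcing $\Dim(X)=\infty$.

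The main content is part (2). Fix $\phi(x,y)$ with $x$ a single vector variable and $y$ a tuple containing $m$ vector variables. Set $M_0:=4m+1$ in the symmetric case (or the next even integer in the alternating case, to ensure nondegenerate subspaces of that dimension exist). I express ``$\Dim(\phi(x,b))=n$'' via
\[
\psi_n(y)\;:=\;\exists z\,\bigl(\Theta(z)\wedge y\subseteq\Lin(z)\wedge\forall x\,(\phi(x,y)\to x\in\Lin(z))\wedge\rho_n(y,z)\bigr),
\]
where $z=(z_1,\dots,z_{M_0})$, $\Theta(z)$ asserts that $z$ is linearly independent and spans a nondegenerate subspace (via $\theta_{M_0}(z)$ together with non-singularity of the Gram matrix $([z_i,z_j])_{i,j}$, or positive-definiteness in the symmetric RCF case), and $\rho_n(y,z)$ asserts ``$\rk_{\Lin(z)}(\phi(x,y)\cap\Lin(z))=n$''. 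Analogously,
\[
\chi_n(y)\;:=\;\exists z\,\bigl(\Theta(z)\wedge y\subseteq\Lin(z)\wedge\rho_{M_0-n}(y,z)\bigr)\;\wedge\;\bigwedge_{d=0}^{2m}\neg\psi_d(y),
\]
the finite conjunction capturing ``$\Dim(\phi(x,y))=\infty$'' via the bound $\Dim\leq 2m$ in the finite case (Fact/Definition~\ref{fact_dim}). The equivalences with $\Dim(\phi(x,b))=n$ and $\Codim(\phi(x,b))=n$ are then derived from Fact/Definition~\ref{fact_dim} applied to an approximating sequence whose $M_0$-th term is $(K(\mathfrak{C}),\Lin(z))$: a witness $z$ either places $\phi(\mathfrak{C},b)$ in a nondegenerate subspace of dimension $M_0$ with rank $n$ (giving finite $\Dim=n$), or certifies rank $M_0-n$ for $\phi(\mathfrak{C},b)\cap\Lin(z)$ in a regime forced to be the codimension case (giving $\Codim=n$); conversely such $z$ exists by part~(3), Fact~\ref{gross}, and $\aleph_0$-saturation of $\mathfrak{C}$.

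The main obstacle is showing that $\rho_n(y,z)$ is itself first-order in $T^*_\infty$ uniformly in the parameters $(y,z)$. For this, Fact~\ref{fact_biint} provides a uniform interpretation of $T^*_{M_0}$ in the pure field $(K,+,\cdot,\leq)$ with interpretation parameters the basis $z$, and Fact~\ref{fact_rm}(5) gives $\emptyset$-definability of Morley rank (respectively topological dimension) in the field. Composing these reduces $\rho_n$ to a $(y,z)$-definable condition on the scalar coordinates of $\phi(x,y)\cap\Lin(z)$ obtained via the functions $F_n$ from the basis $z$.
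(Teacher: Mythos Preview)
The paper does not give its own proof of this statement: it is recorded as a Fact with the citation \cite[Theorem 12.4.5]{Gr}, and the paper only remarks (just before Fact/Definition~\ref{fact_dim}) that ``having Proposition~\ref{correction}, the rest of the arguments from \cite[Subsection 12.4]{Gr} go through unchanged.'' So there is nothing in the paper to compare your argument against line by line.

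That said, your argument is correct and supplies exactly the kind of self-contained proof the paper omits. Parts (1) and (3) are straightforward consequences of Proposition~\ref{correction} and Remark~\ref{biint}, as you say. For (2), your reduction to definability of $\rk$ in a fixed finite-dimensional nondegenerate slice $\Lin(z)$ of dimension $M_0=4l(y)+1$ is the natural approach, and the two ingredients you invoke---the uniform interpretation of $T^*_{M_0}$ in the field via the basis $z$ (Fact~\ref{fact_biint}) and definability of $\rk$ in the field (Fact~\ref{fact_rm}(5))---are exactly what is needed to make $\rho_n(y,z)$ first-order. One small point worth making explicit in your write-up of $\psi_n$: the reason a witness $z$ of size $M_0$ suffices in the finite-$\Dim$ case is that if $\phi(\C,b)$ is contained in \emph{some} finite-dimensional subspace then in fact $\phi(\C,b)\subseteq\Lin_K(V(b))$ (any $v\in\phi(\C,b)\setminus\Lin_K(V(b))$ would, via Fact~\ref{dcl}(1), drag an infinite-dimensional sphere's worth of realizations into $\phi(\C,b)$), and $\Lin_K(V(b))$ has dimension at most $l(y)<M_0$. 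With that observation your $\psi_n$ and $\chi_n$ do what you claim.
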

\begin{remark}
 It is clear from the above result that there are formulas $\psi_{fin}(y)$ and 
 $\chi_{fin}(y)$ such that $\Dim(\phi(x,b))\in\omega\iff \models\psi_{fin}(b)$ and 
$\Codim(\phi(x,b))\in\omega\iff \models\chi_{fin}(b)$.  
\end{remark}

\section{Dimension on all definable sets}\label{sect_dim}

In this section, we define a notion of dimension of an arbitrary set definable in $T^*_\infty$ and we study its properties. On definable subsets of $V$ it is going to distinguish between infinite-dimensional sets of distinct codimensions, so formally it is not an extension of $\Dim$. Thus we are going to denote it by $\dim$ rather than $\Dim$ to avoid confusion.
We continue working in $T^*$ with $*\in \{$ACF$_p,$RCF$\}$.

Let   
$I=\{f\in (\mathbb{Z},+)^\omega:f(n)=0 \mbox{ for almost all } n\in \omega\}\leq (\mathbb{Z},+)^\omega$. Consider the quotient group:
 $$S:=(\mathbb{Z},+)^\omega/I.$$
For a function $f:\omega\to\mathbb{Z}$ we will write $[f]$ to mean $f/I$, and when $f$ is a given by a linear function over $\mathbb{Z}$, i.e. there are $d_0, d_1\in\mathbb{Z}$ such that $f(n)=d_0+d_1n$ for every $n\in \omega$, we shall identify $f$ with the linear polynomial $d_0+d_1n$ in variable $n$. For example, $[n]$ denotes the class of the function $g:\omega\to \mathbb{Z}$ given by $g(n)=n$ for any $n$.
Now put $$S_{\lin}:=\{[d_0+d_1n]: d_0,d_1\in \mathbb{Z} \}\leq S.$$
We will write $[f]\leq [g]$ if $f(k)\leq g(k)$ for almost all $k\in \omega$. For a partial function $f:\omega\nrightarrow \mathbb{Z}$ with domain co-finite in $\omega$, by $[f]$ we will mean $[\bar{f}]$ for any $\bar{f}:\omega\to \mathbb{Z}$ extending $f$.

\begin{remark}
 $(S_{\lin},+,\leq)$ is an ordered abelian group isomorphic to $(\mathbb{Z}\times\mathbb{Z},+,\leq_{lex})$.
\end{remark}
We will write $[f]<[g]$ when $[f]\leq [g]$ but $[f]\neq [g]$.

\begin{definition}
Suppose $X$ is a non-empty set definable in $T^*_\infty$ over a model $M=\bigcup^a_{r} N_r$.
Let $f_{X,M,(N_r)_{r\in \omega}}:\omega\to \mathbb{Z}$ be given by $f_{X,M,(N_r)_{r\in \omega}}=\rk_{N_r}(X\cap N_r)$ for each $r$.
Put 
$$dim(X):=[f_{X,M,(N_r)_{r\in \omega}}]\in S.$$
We also put $dim(\emptyset)=-1$.
\end{definition}

In the alternating case we define  $\dim(X)$ to be the class of the function $$f_{X,M,(N_r)_{r\in \{2,4,\dots\}}}:\{2,4,\dots\}\to \mathbb{Z}$$
with respect to being equal except finitely many points. However, we will see in  Theorem \ref{thm_dim} that the dimension of any definable set is given by a linear function (both in the symmetric and the alternating case), so, having Theorem \ref{thm_dim}, we can naturally identify $\dim(X)$ with an element of $S_{lin}$ also in the alternating case.
\begin{remark}
By Remark \ref{well_def_k}, if $X$ is also definable over $M'=\bigcup ^a_{r\in \omega} N_r'$, then 
 $[f_{X,M,(N_r)_{r\in \omega}}]=[f_{X,M',(N'_r)_{r\in \omega}}]$,
so the definition of $\dim(X)$ is independent of the choice of the model $M$ and the approximating sequence $(N_r)_{r\in\omega}$.
 \end{remark}

 We now aim to prove that the dimension of any set definable in $T^*_\infty$ belongs to $S_{lin}$ (so in particular the dimensions of the definable sets are linearly ordered). This will be proved first for definable subsets of $V^k$ by induction on $k$ simultaneously with some other statements.
 In particular, we define below a family of finite sets $D_{k,l}\subseteq S_{lin}$ which will turn out to contain the  dimension of any subset of $V^k$ definable over a set containing at most $l$ vectors.  
 
 
\begin{definition}\label{def_dkl}
For any $k,l\in \omega$ put $$D_{k,l}:=\{[d_0+d_1n]:0\leq d_1\leq k\mbox{ and }-d_1(2l+1)-k(k-1)\leq d_0\leq (k-d_1)2l+k(k-1)\}\subseteq S_{\lin}.$$ 
\end{definition}
The following property of the sets $D_{k,l}$ will be used in the inductive proof of Theorem \ref{thm_dim}. 
\begin{remark}\label{dkl}
 $D_{k,l}+D_{1,k+l}\subseteq D_{k+1,l}$ for any $l,k\in \omega$.
\end{remark}
\begin{proof}
 Suppose $[d_0+d_1n]\in D_{k,l}$ and  $[d_0'+d_1'n]\in D_{1,k+l}$. Then 
clearly $d_1+d_1'\leq k+1$ and $$-d_1(2l+1)-k(k-1)\leq d_0\leq (k-d_1)2l+k(k-1)$$ as well as
$$-d'_1(2k+2l+1)\leq d'_0\leq (1-d'_1)(2k+2l) $$
so $$-d_1(2l+1)-k(k-1)-d'_1(2k+2l+1)\leq d_0+d_0'\leq (k-d_1)2l+k(k-1)+(1-d'_1)(2k+2l),$$
which gives what we need, as 
$$-d_1(2l+1)-k(k-1)-d'_1(2k+2l+1)=
-(d_1+d'_1)(2l+1) -2d'_1k -k(k-1)\geq$$ $$\geq -(d_1+d'_1)(2l+1) -2k -k(k-1)=-(d_1+d'_1)(2l+1)-k(k+1)$$
and, similarly, on the right-hand side: $$ (k-d_1)2l+k(k-1)+(1-d'_1)(2k+2l)=
(k+1-d_1-d'_1)2l+k(k-1)+(1-d'_1)2k\leq$$ 
$$\leq
(k+1-d_1-d'_1)2l+k(k+1).$$
Hence $[d_0+d_1n]+[d'_0+d'_1n]=[d_0+d'_0+(d_1+d'_1)n]\in D_{k+1,l}$.
\end{proof}

 
 

\begin{definition}
Let $\alpha:\omega^2\to \omega$ be any
function such that:\\
\begin{itemize}
 \item $\alpha(k,l)\geq 2kl+2l+2k^2+1$ for any $k,l\in \omega$.
 \item $\alpha(k+m,l)\geq \alpha (k,l+m)$ 
 and $\alpha(k+m,l)\geq \alpha (k,l)$ for any $k,l,m\in\omega$.
\end{itemize}
Clearly, such a function can be constructed recursively on $k$.
 We will say that a definable set $X\subseteq V^k$ is \emph{nice}, if 
 $X=\emptyset$ or for each $a$ over which $X$ is definable one has 
 $\dim(X)=[d_0+d_1n]\in D_{k,l(a)}$, and whenever $M=\bigcup^a_{r\in \omega} N_r$, $R\geq \alpha(k,l(a))$, and $a\subseteq N_R$, then  we have $$\rk_{N_R}(X\cap N_R)=d_0+d_1R.$$
 \end{definition}
In the above situation, we know by the definition of $\dim$ that if $\dim(X)=[d_0+d_1n]$ then the equality $\rk_{N_R}(X\cap N_R)=d_0+d_1R$ holds for sufficiently large $R$, but the niceness property, saying that it holds for any $R$ with $R\geq \alpha(k,l(a))$ and   $a\subseteq N_R$, allows us to choose $R$ uniformly when we work with a uniformly definable family, which will be crucial in the proof of Lemma \ref{lemma_rk}(c) below. 

Note that by Fact \ref{fact_dim} we have that any definable  $X=\phi(\C,a)\subseteq V$ is nice: If $\Dim(X)=d_0\in \omega$, then $0\leq d_0\leq 2l(a)$ and  $\dim(X)=[d_0]$, so the inequalities $-d_1(2l(a)+1) \leq d_0\leq (k-d_1)2l(a)$ are satisfied as $d_1=0$ and $k=1$. If $\Codim(X)\in\omega$, then $\dim(X)=[d_0+n]$ for $d_0=-\Codim(X)$, so 
$d_1=1$ and $-2l(a)-1\leq d_0\leq 0$, so again the required inequalities hold. In both cases the equality $\rk_{N_R}(X\cap N_R)=d_0+d_1R$ holds for any $R\geq 4l(a)+1$ with $a\subseteq N_R$, hence for any $R\geq \alpha(1,l(a))$, as $\alpha(1,l(a))\geq 4l(a)+1$. 

We will eventually see in Theorem \ref{thm_dim} that all sets definable in $T^*_\infty$ are nice.

\begin{lemma}\label{lemma_geq}
 If $[d_0+d_1n],[d'_0+d'_1n]\in D_{k,l}$ and $[d_0+d_1n]> [d'_0+d'_1n]$, then $d_0+d_1r> d'_0+d'_1r $ for any $r\geq \alpha(k,l)$.
 \end{lemma}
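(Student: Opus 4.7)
The plan is to unwind what $[d_0+d_1n] > [d_0'+d_1'n]$ means for linear functions on $\omega$, and then do a two-case argument.

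First I would observe that, since $[f] = [g]$ for linear $f,g$ is the same as $f = g$ as functions (they agree on a cofinal set iff all coefficients match), the inequality $[d_0+d_1n] > [d_0'+d_1'n]$ is equivalent to the lexicographic condition: either (a) $d_1 > d_1'$, or (b) $d_1 = d_1'$ and $d_0 > d_0'$. In case (b), for every $r$ we have $(d_0+d_1r) - (d_0'+d_1'r) = d_0 - d_0' \geq 1 > 0$, which is immediate.

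The substantive case is (a). Here I would write the target quantity as
\[
(d_0+d_1r)-(d_0'+d_1'r) \;=\; (d_1-d_1')\,r + (d_0-d_0')
\]
and bound $d_0 - d_0'$ from below using the defining inequalities of $D_{k,l}$. From $[d_0+d_1n]\in D_{k,l}$ we have $d_0 \geq -d_1(2l+1)-k(k-1)$, and from $[d_0'+d_1'n]\in D_{k,l}$ we have $d_0' \leq (k-d_1')2l+k(k-1)$. Subtracting gives
\[
d_0-d_0' \;\geq\; -2l(d_1-d_1') - d_1 - 2kl - 2k(k-1),
\]
so with $d_1-d_1'\geq 1$,
\[
(d_1-d_1')r + (d_0-d_0') \;\geq\; (d_1-d_1')(r-2l) - d_1 - 2kl - 2k(k-1) \;\geq\; (r-2l) - k - 2kl - 2k(k-1).
\]
Using $d_1 \leq k$ and simplifying the right-hand side yields $r - (2l + 2kl + 2k^2 - k)$. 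Since $\alpha(k,l) \geq 2kl + 2l + 2k^2 + 1$ by the defining property of $\alpha$, for $r \geq \alpha(k,l)$ this is at least $1 + k > 0$, finishing the case.

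The only real obstacle is the arithmetic bookkeeping to check that the bound built into the definition of $\alpha(k,l)$ is exactly tight enough; but since $\alpha$ was defined precisely so that $\alpha(k,l) \geq 2kl + 2l + 2k^2 + 1$, this is expected to work on the nose. No deeper idea is required.
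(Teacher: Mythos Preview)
Your proof is correct and follows essentially the same approach as the paper's: both split into the trivial case $d_1=d_1'$ and the main case $d_1>d_1'$, then in the main case bound $d_0-d_0'$ from below using the defining inequalities of $D_{k,l}$, factor out $(d_1-d_1')(r-2l)$, and conclude via the lower bound $\alpha(k,l)\geq 2kl+2l+2k^2+1$. The only difference is cosmetic bookkeeping---you retain the $+k$ term at the end where the paper discards it---but the argument is identical.
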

\begin{proof}
If $d_1=d_1'$ then  $d_0> d_0'$, and the inequality is obvious, so assume $d_1>d_1'$. Then, by the inequalities in the definition of niceness we get: $$d_0+d_1r-(d_0'+d_1'r)=d_0-d_0'+(d_1-d_1')r\geq$$
$$\geq 
-d_1(2l+1)-k(k-1)-((k-d_1')2l+k(k-1))+(d_1-d_1')\alpha(k,l)=$$
$$=(d_1'-d_1)2l-2kl-d_1-2k(k-1)+(d_1-d_1')\alpha(k,l)=$$ $$=(d_1-d_1')(\alpha(k,l)-2l)-2kl-2k(k-1) -d_1\geq
\alpha(k,l)-2l-2kl-2k^2> 0,$$
so $d_0+d_1r> d_0'+d'_1r$.
\end{proof}

\begin{lemma}\label{last}
  If $M=\bigcup^a_r N_r$ and $\emptyset\neq X=\phi(M,a)$ for some formula $\phi(x;y)$, then $X\cap N_r\neq \emptyset$ for any $r\geq 2l(xy)$ such that $a\subseteq N_r$. 
 \end{lemma}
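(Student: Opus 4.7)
The plan is to transfer a witness of $\phi(x,a)$ from $M$ into $N_r$ by producing two finite-dimensional nondegenerate substructures of $M$---one containing the original witness, one sitting inside $N_r$---that agree on $a$, and then using completeness of $T^*_{2l(xy)}$ over $a$. Concretely, pick any $b$ with $M\models \phi(b,a)$; the tuple $ab$ carries exactly $l(xy)$ vectors. By the ``moreover'' clause of Fact~\ref{gross} applied in $M$ there is a $K(M)$-linear subspace $V_0\subseteq V(M)$ with $\Lin_{K(M)}(a,b)\subseteq V_0$ and $(K(M),V_0)\models T^*_{2l(xy)}$. Since $r\ge 2l(xy)$ and $N_r\models T^*_r$, an analogous application of Fact~\ref{gross} inside $N_r$, combined with its basis-extension clause to enlarge the resulting subspace to dimension $2l(xy)$ (which fits inside $V(N_r)$ because $\dim V(N_r)=r\ge 2l(xy)$), produces a subspace $L_0\subseteq V(N_r)$ with $a\subseteq L_0$ and $(K(M),L_0)\models T^*_{2l(xy)}$.

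Both $(K(M),V_0)$ and $(K(M),L_0)$ are substructures of $M$ in the relevant language $L_\theta^F\cup L_K$ (being $K$-linear subspaces of $V(M)$, they are automatically closed under every $F_n$), and both are models of the complete theory $T^*_{2l(xy)}$. Quantifier elimination in $L_\theta^F\cup L_K$ then forces their $L(a)$-theories to coincide as soon as $a$ has the same atomic type in each; but the relevant atomic data---linear (in)dependencies among subtuples of $a$, values of the $F_n$'s on them, inner products $[a_i,a_j]$, and the common field reduct $K(M)$---is read off the single tuple $a\subseteq V(M)$, and is therefore identical. Hence $(K(M),V_0)\equiv_{K(M)\cup a}(K(M),L_0)$.

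Now let $\phi'(x,y)$ be a quantifier-free $L_\theta^F\cup L_K$-formula equivalent to $\phi(x,y)$ modulo $T^*_\infty$. Since $\phi'$ is quantifier-free and $b,a$ lie in the substructure $(K(M),V_0)$, we get $(K(M),V_0)\models\phi'(b,a)$, and in particular $(K(M),V_0)\models\exists x\,\phi'(x,a)$. The elementary equivalence transfers this to $(K(M),L_0)\models\exists x\,\phi'(x,a)$, witnessed by some tuple $b'$ whose vector entries lie in $L_0\subseteq V(N_r)$ and whose scalar entries lie in $K(M)=K(N_r)$, so $b'\in N_r$. Quantifier-freeness of $\phi'$ and QE in $T^*_\infty$ then give $M\models\phi(b',a)$, i.e.\ $b'\in X\cap N_r$.

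The main bookkeeping nuisance---and the only spot where the hypothesis $r\ge 2l(xy)$ (rather than just $r\ge 2l(a)$) is used---is the enlargement producing $L_0$ of dimension $2l(xy)$, rather than merely $2l(a)$, inside $V(N_r)$: Fact~\ref{gross} applied to $a$ alone yields only a $2l(a)$-dimensional nondegenerate subspace, and one must invoke the basis-extension part of Fact~\ref{gross} to reach dimension $2l(xy)$, which works because $V(N_r)$ still has $r-2l(a)\ge 2l(x)$ further dimensions available.
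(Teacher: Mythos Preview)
Your proof is correct and follows essentially the same strategy as the paper's: both arguments use Fact~\ref{gross} to place $a$ and a witness inside a finite-dimensional nondegenerate substructure, produce a second such substructure inside $N_r$ containing $a$, and then use quantifier elimination to identify the two over $a$. The only cosmetic difference is that the paper writes down an explicit orthonormal-basis isomorphism between two $r$-dimensional pieces (fixing $\Lin_{K(M)}(e_1,\dots,e_{2l(a)})\supseteq V(a)$) and pushes the witness through it, whereas you work at dimension $2l(xy)$ and invoke completeness of $T^*_{2l(xy)}$ to transfer the sentence $\exists x\,\phi'(x,a)$; since that completeness is proved precisely by such an isomorphism, the two arguments are equivalent.
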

 \begin{proof} 
 This is similar to the proof of Fact \ref{dcl}(1).
 Let $c\in X$ and put $l:=l(a)$. We can find $e_1,\dots, e_{2l}, e'_{2l+1},\dots,e'_{r}$ such that $(e_1,\dots,e_r)$ and 
 $(e_1,\dots, e_{2l},e'_{2l+1},\dots,e'_{r})$ are orthonormal sequences, $V(a)\subseteq \Lin_{K(M)}(e_1,\dots, e_{2l})$, 
 $V(c)\subseteq \Lin_{K(M)}(e_1,\dots, e_{2l}, e'_{2l+1},\dots, e'_r)$ and $V(N_r)=\Lin_{K(M)}(e_1,\dots, e_r)$. Then letting $f=\id_{K(M)}\cup F$ where $F$ is a $K(M)$-linear function sending 
 $(e_1,\dots, e_{2l}, e'_{2l+1},\dots, e'_r)$ to $(e_1,\dots, e_r)$, we see by quantifier elimination that $\tp(f(c)/a)=\tp(c/a)$. In particular, $f(c)\in X\cap N_r$.
 \end{proof}

\begin{lemma}\label{lemma_rk}
a) If $X\subseteq Y$ then $dim(X)\leq dim(Y)$ \\
b)
 If $X_1,X_2\subseteq V^k$ are nice then 
$dim(X_1\cup X_2 )=\max(dim(X_1),dim(X_2))$.
If additionally $X_1$ and $X_2$ are definable over every tuple of parameters over which $X$ is definable, then $X_1\cup X_2$ is also nice.\\
c) 
Let $\pi:V^{k+m}\to V^k$ be the projection on the last $k$ coordinates (where $m\geq 1$). 
Suppose $X\subseteq V^{k+m}$ is definable and non-empty, all sections $X_y=\{x\in V^m:(x,y)\in X\}$ with $y\in \pi[X]$ are nice and they all have same dimension $s$, and  $\pi[X]$ is nice.
Then $\dim(X)=s+\dim(\pi[X])$.

If additionally $m=1$ then $X$ is nice.
\end{lemma}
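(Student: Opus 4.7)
The plan is to prove each of the three parts by unwinding the definition of $\dim$ and reducing the claim to the pointwise properties of $\rk_{N_r}$ listed in Fact \ref{fact_rm}.

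For (a), fix any $M=\bigcup^a_r N_r$ over which both sets are definable. Then $X\cap N_r\subseteq Y\cap N_r$ for every $r$, and Fact \ref{fact_rm}(1) applied pointwise in $r$ yields $\dim(X)\leq \dim(Y)$.

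For (b), Fact \ref{fact_rm}(2) applied pointwise shows that $\rk_{N_r}((X_1\cup X_2)\cap N_r)$ is the pointwise maximum of the rank functions of $X_1$ and $X_2$. By niceness, each of these is eventually linear, and since $\dim(X_1),\dim(X_2)$ lie in the totally ordered finite set $D_{k,l(a)}\subseteq S_{\lin}$, Lemma \ref{lemma_geq} identifies this pointwise maximum with the larger of the two dimensions from the uniform threshold $\alpha(k,l(a))$. This simultaneously yields the equality of dimensions and, under the additional hypothesis, the niceness of $X_1\cup X_2$: whenever $a$ defines $X_1\cup X_2$, both $X_1,X_2$ are also defined over $a$, so $\dim(X_1\cup X_2)=\max(\dim(X_1),\dim(X_2))\in D_{k,l(a)}$ and the rank equality holds from the same threshold.

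For (c), write $\dim(\pi[X])=[t_0+t_1n]$ and $s=[s_0+s_1n]$, fix $a$ defining $X$ and $M=\bigcup^a_r N_r$, and take any $R\geq \max(\alpha(k,l(a)),\alpha(m,l(a)+k))$ with $a\subseteq N_R$. Since $l(ay)=l(a)+k$ for $y\in V^k$, the two niceness hypotheses apply at level $R$: $\rk_{N_R}(\pi[X]\cap N_R^k)=t_0+t_1R$, and $\rk_{N_R}(X_y\cap N_R^m)=s_0+s_1R$ for every $y\in\pi[X]\cap N_R^k$. The key observation is that $\pi[X\cap N_R^{k+m}]=\pi[X]\cap N_R^k$: for each such $y$, the nonempty section $X_y$ meets $N_R^m$ by Lemma \ref{last}, as the inequality $\alpha(m,l(a)+k)\geq 2(m+l(a)+k)$ built into the definition of $\alpha$ supplies the required threshold. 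Fact \ref{fact_rm}(4) applied to the definable surjection $\pi\colon X\cap N_R^{k+m}\to \pi[X]\cap N_R^k$ with constant fibre rank $s_0+s_1R$ then gives $\rk_{N_R}(X\cap N_R^{k+m})=(t_0+s_0)+(t_1+s_1)R$, so $\dim(X)=s+\dim(\pi[X])$. For the niceness statement when $m=1$: Remark \ref{dkl} yields $\dim(X)\in D_{k,l(a)}+D_{1,l(a)+k}\subseteq D_{k+1,l(a)}$, and the two monotonicity properties of $\alpha$ ensure $\alpha(k+1,l(a))\geq \max(\alpha(k,l(a)),\alpha(1,l(a)+k))$, so the same computation produces the required linear rank formula from the threshold $\alpha(k+1,l(a))$.

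The main subtlety lies in (c): a single $R$ must simultaneously witness niceness of $\pi[X]\subseteq V^k$ and uniform niceness of every section $X_y\subseteq V^m$ with $y\in\pi[X]\cap N_R^k$, while also being large enough to force $\pi[X\cap N_R^{k+m}]=\pi[X]\cap N_R^k$; the monotonicity built into $\alpha$ together with Lemma \ref{last} are designed precisely to make this bookkeeping go through.
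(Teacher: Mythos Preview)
Your proof is correct and follows essentially the same approach as the paper's. The only cosmetic differences are that in (c) you take the threshold $R\geq\max(\alpha(k,l(a)),\alpha(m,l(a)+k))$ whereas the paper uses the single bound $\alpha(k+m,l(a))$ (which dominates both by the built-in monotonicity of $\alpha$), and in (b) you invoke Lemma \ref{lemma_geq} already for the dimension equality where the paper just uses that two linear functions with comparable classes in $S_{\lin}$ are eventually ordered; neither variation affects the argument.
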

\begin{proof}
 (a) Suppose $X=\phi(\C,a)$, $Y=\psi(\C,b)$, $M=\bigcup^a_{r\in\omega}N_r$ and 
  $a,b\subseteq N_R$ for some $R\in \omega$. Then for any $r\geq R$ we have $X\cap N_r\subseteq Y\cap N_r$, so $\rk_{N_r}(X\cap N_r)\leq \rk_{N_r}(Y\cap N_r)$ by Fact \ref{fact_rm}(1). 
 Hence $dim(X)\leq dim(Y)$.
\\
(b) 
Suppose $X_1=\phi(\C,a)$, $X_2=\phi(\C,b)$, $\dim[X_1]=[d_0+d_1n]$, $\dim{X_2}=[d_0'+d_1'n]$,   $M=\bigcup^a_{r\in\omega}N_r$, and  $R\geq \alpha(k,\max(l(a),l(b)))$ is such that $a,b\subseteq N_R$. We may assume $\dim(X_1)\geq \dim(X_2)$.
For any $r\geq R$ we have by Fact \ref{fact_rm}(2) that $$\rk_{N_r}((X_1\cup X_2)\cap N_r)=\rk_{N_r}((N_r\cap X_1)\cup (N_r\cap X_2))=\max (\rk_{N_r}(X_1\cap N_r),\rk_{N_r}(X_2\cap N_r)),$$
which equals $d_0+d_1r$ for almost all $r\in \omega$, and hence $\dim(X_1\cup X_2)=[d_0+d_1n]=\max(\dim(X_1),\dim(X_2))$.

Suppose additionally that $X_1$ and $ X_2$ are definable over any tuple of parameters over which $X_1\cup X_2$ is definable, and consider any $c$ such that $X_1\cup X_2$ (so also $X_1$ and $X_2$) is definable over $c$. Then the above remains true for any $r\geq \alpha(k,l(c))$ with $c\subseteq N_r$. For any such $r$, we know by niceness of $X_1$ and $X_2$ that $\rk_{N_r}(X_1\cap N_r)=d_0+d_1r$ and $\rk_{N_r}(X_2\cap N_r)=d_0'+d_1'r$. By Lemma \ref{lemma_geq} we have $d_0+d_1r\geq d'_0+d'_1r$, so 
$\rk_{N_R}((X_1\cup X_2)\cap N_r)=\max(d_0+d_1r,d'_0+d'_1r)=d_0+d_1r$, and hence $X_1\cup X_2$ is nice.
   \\
  c) 
  Assume $X=\phi(\C,a)$ and put $l=l(a)$.
 Let $d_0,d_1\in \omega$ be such that $s=[d_0+d_1n]$; as the sections of $X$ are nice, we have that $[d_0+d_1n]\in D_{m,k+l}$.
   Consider any $M=\bigcup^a_{r\in\omega}N_r$, and $r\geq \alpha(k+m,l)$ with $a\subseteq N_r$. 
  Put $Y=\pi[X]$. 
 
  For any $y\in Y\cap N_r$ we have $ (X\cap N_r)_y=X_y\cap N_r$, so, as $r\geq \alpha(k+m,l)\geq \alpha(m,l+k)=\alpha(m,l(ay))$ 
  and $X_y\subseteq V^m$ is a nice set definable over $ay$, we get  $$\rk_{N_r}((X\cap N_r)_y)=d_0+d_1r.$$ Note also that if $y\in Y= \pi[X]$ then  $X_y$ is a non-empty set definable over $ay$, so as, $r\geq \alpha(k+m,l)>2(k+m+l)$, it must meet $N_r$ by Lemma \ref{last}. Thus, $Y\cap N_r=\pi[X\cap N_r]$. 
  Hence, by Fact \ref{fact_rm}(4), we have $\rk_{N_r}(X\cap N_r)=\rk_{N_r}(Y\cap N_r)+d_0+d_1r$.
  As $Y$ is nice and $r\geq \alpha(k+m,l)\geq \alpha(k,l)$, we get that $\rk_{N_r}(Y\cap N_r)=d_0'+d_1'r$ for $d'_0,d'_1$ such that $\dim(Y)=[d_0'+d_1'n]\in D_{k,l}$. So $\rk_{N_r}(X\cap N_r)=d_0+d'_0+(d_1+d'_1)r$.
  Hence $\dim(X)=[d_0+d'_0+(d_1+d'_1)n]=s+\dim(Y)$.
 
 If, additionally, $m=1$, then  $\dim(X)\in D_{k,l}+D_{1,k+l}\subseteq D_{k+1,l}$ by Remark \ref{dkl}, so $X$ is nice.
  \end{proof}

\begin{theorem}\label{thm_dim}
We work in $T^*_\infty$.\\
(a) For any $k\in \omega$, every non-empty definable subset of $V^k$ is nice. In particular,   $\dim(X)\in D_{k,l(a)}$ for any $X\subseteq V^{k}$ definable over a finite tuple $a$. \\
(b) Suppose $k\in \omega$, $x=(x_1,\dots,x_k)$ where each $x_i$ is a variable of the sort $V$, and $y$ is an arbitrary tuple of variables. Then for any formula $\phi(x;y)$ over $\emptyset$ and any $s\in D_{k,l(y)}$ the set $$D_{\phi(x;y), s}:=\{a\in \C: \dim(\phi(x;a))=s\}$$ is $\emptyset$-definable.

\end{theorem}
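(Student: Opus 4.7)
I propose to prove (a) and (b) simultaneously by induction on $k \geq 1$.

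For the base case $k=1$, niceness of every non-empty definable $X \subseteq V$ follows directly from Fact/Definition \ref{fact_dim} together with the discussion immediately after the definition of niceness, which verifies that $\dim(X) \in D_{1,l(a)}$ in both the $\Dim$-finite and $\Codim$-finite cases and that the equality $\rk_{N_R}(X \cap N_R) = d_0 + d_1 R$ holds whenever $R \geq 4l(a) + 1$, a fortiori for $R \geq \alpha(1, l(a))$. Part (b)$_1$ is a reformulation of Fact \ref{fact_def}(2) and the subsequent Remark: the condition $\dim(\phi(x,b)) = [d_0]$ (with $0 \leq d_0 \leq 2l(y)$) is captured by the $\psi$-formulas, and $\dim(\phi(x,b)) = [d_0 + n]$ by the $\chi$-formulas for $d_0 = -\Codim$, so $\{b : \dim(\phi(x,b)) = s\}$ is a finite Boolean combination of $\emptyset$-definable sets for each $s \in D_{1, l(y)}$.

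For the inductive step, assume (a)$_k$ and (b)$_k$ and fix $X = \phi(\mathfrak{C}, a) \subseteq V^{k+1}$ non-empty. Let $\pi : V^{k+1} \to V^k$ project on the last $k$ coordinates. The key move is to stratify the base by the dimension of the fiber. For each $s_1 \in D_{1, k + l(a)}$ set
$$Y_{s_1} := \{y \in V^k : \dim(X_y) = s_1\}, \qquad X_{s_1} := X \cap (V \times Y_{s_1}).$$
Applying (b)$_1$ with parameter tuple $(y, a)$ (where $l((y,a)) = k + l(a)$) yields a formula $\psi_{s_1}(y; z)$ over $\emptyset$ with $Y_{s_1} = \psi_{s_1}(\mathfrak{C}; a)$; so $Y_{s_1}$, and hence $X_{s_1}$, is definable over \emph{any} tuple over which $X$ itself is definable, because $Y_{s_1}$ depends only on $X$ and on the $\emptyset$-definable formula $\psi_{s_1}$. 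Each section $(X_{s_1})_y$ with $y \in Y_{s_1}$ is a nice subset of $V$ of dimension $s_1$ by the base case, and $Y_{s_1}$ is nice by (a)$_k$. Lemma \ref{lemma_rk}(c) (with $m = 1$) now gives that $X_{s_1}$ is nice with $\dim(X_{s_1}) = s_1 + \dim(Y_{s_1}) \in D_{1,k+l(a)} + D_{k,l(a)} \subseteq D_{k+1, l(a)}$ by Remark \ref{dkl}. Since $X = \bigcup_{s_1} X_{s_1}$ is a finite union (as $D_{1, k+l(a)}$ is finite) and each piece is definable over every tuple defining $X$, iterating Lemma \ref{lemma_rk}(b) shows that $X$ itself is nice, completing (a)$_{k+1}$.

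For (b)$_{k+1}$, the same decomposition yields, uniformly in $a$,
$$\dim(\phi(\mathfrak{C}, a)) = \max_{s_1 \in D_{1, k + l(y)}} \bigl(s_1 + \dim(Y_{s_1, a})\bigr),$$
where $Y_{s_1, a} := \psi_{s_1}(\mathfrak{C}; a)$. By the inductive (b)$_k$ applied to $\psi_{s_1}$, for each $t \in D_{k, l(y)}$ the set $\{a : \dim(Y_{s_1, a}) = t\}$ is $\emptyset$-definable. The condition $\dim(\phi(\mathfrak{C}, a)) = s$ is then the (finite) Boolean combination: there exist $(s_1, t)$ with $s_1 + t = s$ and $\dim(Y_{s_1, a}) = t$, while for all $(s_1', t')$ with $s_1' + t' > s$ we have $\dim(Y_{s_1', a}) \neq t'$. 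This is $\emptyset$-definable, as required.

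The main obstacle lies in the bookkeeping for niceness: the uniformity encoded by $\alpha(k, l(a))$ must be preserved through the projection and union steps. This is precisely the role of the second clauses of Lemma \ref{lemma_rk}(b)--(c); the crucial point that makes Lemma \ref{lemma_rk}(b) applicable to the finite union $\bigcup_{s_1} X_{s_1}$ is that each $X_{s_1}$ is intrinsically determined by $X$ via the $\emptyset$-definable formula $\psi_{s_1}$, so it is automatically definable over every tuple over which $X$ is definable.
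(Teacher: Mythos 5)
Your proof is correct and follows essentially the same route as the paper's: simultaneous induction on $k$, base case from Section 3, stratification of the base $V^k$ by the dimension of the fiber (your $Y_{s_1}$ is the paper's $X_{s,a}$, your $X_{s_1}$ is the paper's $(\pi|_{X_a})^{-1}[X_{s,a}]$), then an application of Lemma \ref{lemma_rk}(c) followed by \ref{lemma_rk}(b), with the Boolean combination over $D_{1,k+l(y)}\times D_{k,l(y)}$ handling (b). One small point in your favor: you make explicit the verification of the side condition of Lemma \ref{lemma_rk}(b) — that each stratum is definable over any tuple defining $X$, because $Y_{s_1}$ is determined by $X$ via an $\emptyset$-definable condition — whereas the paper leaves this implicit.
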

\begin{proof}
We will prove (a) and (b) simultaneously by induction on $k$.

 When $k=1$, we know that  (a) and (b) both hold by Section \ref{sec_1}. 
 
 Suppose now $k\geq 1$ and  (a) and (b) are true for $1,2,\dots,k$. Consider
 any formula $\phi(x;y)$ over $\emptyset$ with $x=(x_1,\dots,x_{k+1})$, where each $x_i$ is a variable of the sort $V$. 
 
  Consider any $a\in \C$ compatible with $y$, and write $X_a=\phi(\C;a)\subseteq V^{k+1}$. For $b\in V^k$ put $$X_{b,a}=\phi(\C;b,a)\subseteq V .$$

 For any $s\in D_{1,k+l(y)}$ let $\chi_s(x_2,\dots,x_{k+1};y)$ be 
 a formula over $\emptyset$ such that $$\models \chi_s(v_2,\dots,v_{k+1};w) \iff \dim(\phi(\C;v_2,
 \dots,v_{k+1},w))=s \mbox{ for all }v_2,\dots,v_{k+1},w \in \C, $$ (such a formula exists, as (b) holds for $k=1$). Put $X_{s,a}=\{b\in V:\dim(X_{b,a})=s\}=\chi(\C;a)$.
 
 Then letting $\pi:V^{k+1}\to V^k$ be the projection on the last $k$ coordinates, we have for each  $s\in D_{1,k+l(y)}$ and each
 $b\in X_{s,a}$ that $\dim((\pi|_{X_a})^{-1}(b))=\dim(X_{b,a})=s$ and $X_{b,a}$ is nice by the inductive hypothesis, as is $X_{s,a}$. Thus, by Lemma \ref{lemma_rk}(c), we get that $(\pi|_{X_a})^{-1}[X_{s,a}]$  is nice. Now
 $$                                                                         
 X_a=\bigcup_{s\in D_{1,k+l(y)}}(\pi|_{X_a})^{-1}[X_{s,a}]                                                         $$
  and $(\pi|_{X_a})^{-1}[X_{s,a}]$ is nice for each $s\in D_{1,k+l(y)}$, so by Lemma \ref{lemma_rk}(b) we conclude that $X_a$ is nice, which proves part (a) of the theorem for $k+1$.
 
 Lemma
\ref{lemma_rk} gives us also that  $\dim (X_a)=\max_{s\in D_{1,k+l(y)}}\dim((\pi|_{X_a})^{-1}[X_{s,a}])$ and\\ $\dim((\pi|_{X_a})^{-1}[X_{s,a}])=s+\dim(X_{s,a})$. Hence, putting 
  $$I=D_{1,k+l(y)}\times D_{k,l(y)},$$ we get that for any $a\in \C$ compatible with $y$ we have $\dim(\phi(x_1,\dots,x_{k+1};a))\in \{s+t:(s,t)\in I\}$.
So fix any $(s_0,t_0)\in I$, and put $I_{=s_0+t_0}=\{(s,t)\in I: s+t=s_0+t_0\}$ and $I_{>s_0+t_0}=\{(s,t)\in I: s+t>s_0+t_0\}$. Then  $$\dim(\phi(x_1,\dots,x_{k+1};a))=s_0+t_0\iff$$ $$\iff (\bigvee_{(s,t)\in I_{=s_0+t_0}} dim(\chi_s(x_2,\dots,x_{k+1};a))=t)\wedge(\bigwedge_{(s,t)\in I_{>s_0+t_0}}
\neg dim(\chi_s(x_2,\dots,x_{k+1};a))=t).$$
By the inductive hypothesis, for any $(s,t)\in I$ the condition $\dim(\chi_s(x_2,\dots,x_{k+1};a))=t$
is definable (in the variable $a$), so, by the above equivalence, the condition $dim(\phi(x_1,\dots,x_{k+1};a))=s_0+t_0$  is definable as well. This proves that (b) holds for $k+1$.
\end{proof}

\begin{remark}\label{remark_bij}
 If there is a definable bijection $f$ between $X\subseteq V^{k}$ and $Y\subseteq V^{k'}$, then $\dim(X)=\dim(Y)$.
\end{remark}
\begin{proof}
 If $X$, $Y$ and $f$ are all definable over $N_R$, where  $M=\bigcup^a_r N_r$ and $R\in \omega$, then for any $r\geq R$ we have that $f[X\cap N_r]\subseteq Y\cap N_r$ as $\dcl(N_r)=N_r$ by Fact \ref{dcl}, so $X\cap N_r\subseteq f^{-1}[Y\cap N_r]$, and similarly $f^{-1}[Y\cap N_r]\subseteq X\cap N_r$. Hence $$\rk_{N_r}(X\cap N_r)=\rk_{N_r}(f^{-1}[Y\cap N_r])=\rk_{N_r}(Y\cap N_r)$$ by Fact \ref{fact_rm}(3), so 
 $\dim(X)=\dim(Y)$.
\end{proof}

We now extend the definition of dimension to all sets definable in $T_\infty$.
\begin{definition}\label{def_fk}
 If $X$ is any set definable in $T^*_\infty$, so $X\subseteq V^k\times K^{m}$ for some $k,m\in \omega$, then we let $\dim(X)=\dim(X')$, where $X'$ is any definable subset of $V^{k'}$ for some $k'\in \omega$ for which there is a definable (with parameters) bijection between $X$ and $X'$. Such an $X'$ always exists, as we have a definable injection $f_{k,(e_1,\dots,e_m)}:V^k\times K^m\to V^{k+1}$ given by $f_{k,(e_1,\dots,e_m)}(v,a_1,\dots, a_m)=(v,a_1e_1+\dots+a_me_m)$, where $(e_1,\dots,e_m)$ is any fixed linearly independent tuple of vectors from $V$.
 
 Moreover, $\dim(X)$ is well defined by Remark \ref{remark_bij} above.
\end{definition}

Now we summarise the properties of $\dim$ following from what we have proved so far.

\begin{corollary}\label{big_col}
We work in $T^*_\infty$\\
 (1) $\dim$ is $\emptyset$ definable.\\
 (2) If $X\subseteq Y$ are definable then $\dim(X)\leq \dim(Y)$. \\
 (3) $\dim(X_1\cup X_2)=\max(\dim(X_1),\dim(X_2))$ for any definable $X_1$ and $X_2$.\\
 (4) If  $f:X\to Y$ is a definable surjection such that $\dim(f^{-1}(y))=s$ for each $y\in Y$, then $\dim(X)=\dim(Y)+s$ unless $Y$ is empty.
 \end{corollary}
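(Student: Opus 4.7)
The plan is that all four items reduce, via the definable injection $f_{k,(e_1,\dots,e_m)}:V^k\times K^m\to V^{k+1}$ introduced in Definition \ref{def_fk}, to the corresponding assertions about subsets of $V^k$ already established in Lemma \ref{lemma_rk} and Theorem \ref{thm_dim}. Throughout, I will freely use that every nonempty definable subset of $V^k$ is nice by Theorem \ref{thm_dim}(a), so that the hypotheses of Lemma \ref{lemma_rk} are automatically satisfied.

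For (2): if $X\subseteq Y\subseteq V^k\times K^m$ (enlarging the ambient sort if needed), fix any linearly independent $(e_1,\dots,e_m)$ and set $g:=f_{k,(e_1,\dots,e_m)}$. Then $g[X]\subseteq g[Y]\subseteq V^{k+1}$ so Lemma \ref{lemma_rk}(a), together with Definition \ref{def_fk} and Remark \ref{remark_bij}, yields $\dim(X)=\dim(g[X])\leq \dim(g[Y])=\dim(Y)$. For (3): arrange $X_1,X_2$ as subsets of a common $V^k\times K^m$, apply the same $g$, and invoke Lemma \ref{lemma_rk}(b), noting that both $g[X_1]$ and $g[X_2]$ are nice by Theorem \ref{thm_dim}(a), so their union is nice and has the claimed maximum dimension.

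For (1): given a formula $\phi(x,y;z)$ with $x$ from $V^k$ and $y$ from $K^m$, observe that for any linearly independent tuple $(e_1,\dots,e_m)$ the set $f_{k,(e_1,\dots,e_m)}[\phi(\C,\C,a)]\subseteq V^{k+1}$ is defined by the formula
\[
\psi(u_1,\dots,u_{k+1};e_1,\dots,e_m,a) \ := \ \exists v\in V^k\,\exists\, a_1,\dots,a_m\!\in K\ \bigl(u_{[1,k]}=v\wedge u_{k+1}=\textstyle\sum a_i e_i\wedge \phi(v,a_1,\dots,a_m,a)\bigr).
\]
By Theorem \ref{thm_dim}(b) applied to $\psi$, the condition ``$\dim(\psi(\C;e_1,\dots,e_m,a))=s$'' is $\emptyset$-definable in $(e_1,\dots,e_m,a)$; by Remark \ref{remark_bij} combined with Definition \ref{def_fk} it is independent of the choice of linearly independent $(e_1,\dots,e_m)$ and equals ``$\dim(\phi(x,y,a))=s$''. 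Prepending $\exists e_1\dots\exists e_m\bigl(\theta_m(e_1,\dots,e_m)\wedge\cdot\cdot\cdot\bigr)$ (equivalently, the universal variant) produces a $\emptyset$-definable formula in $a$ alone.

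For (4), the main step: let $X\subseteq V^{k_1}\times K^{m_1}$, $Y\subseteq V^{k_2}\times K^{m_2}$, fix linearly independent tuples and let $g_1,g_2$ be the corresponding injections into $V^{k_1+1}$ and $V^{k_2+1}$. Set $\tilde f:=g_2\circ f\circ g_1^{-1}\colon g_1[X]\to g_2[Y]$; its fibres are $g_1[f^{-1}(g_2^{-1}(v))]$, so by Remark \ref{remark_bij} they all have dimension $s$. Consider the graph
\[
\Gamma_{\tilde f} \ := \ \{(u,v)\in V^{k_1+1}\times V^{k_2+1} : u\in g_1[X],\ v=\tilde f(u)\}\ \subseteq\ V^{k_1+k_2+2},
\]
which is in definable bijection with $g_1[X]$ via $u\mapsto(u,\tilde f(u))$, hence $\dim(\Gamma_{\tilde f})=\dim(X)$ by Remark \ref{remark_bij} and Definition \ref{def_fk}. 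Let $\pi$ denote the projection on the last $k_2+1$ coordinates. Then $\pi[\Gamma_{\tilde f}]=g_2[Y]$ and, for each $v\in g_2[Y]$, the section $(\Gamma_{\tilde f})_v=\tilde f^{-1}(v)$ has dimension $s$. All these sets are subsets of powers of $V$, hence nice by Theorem \ref{thm_dim}(a), so Lemma \ref{lemma_rk}(c) applies and gives $\dim(\Gamma_{\tilde f})=s+\dim(g_2[Y])$, i.e.\ $\dim(X)=\dim(Y)+s$. The only real content beyond the preceding lemmas is this graph reduction, which transforms an arbitrary definable surjection between ``mixed-sort'' sets into a coordinate projection between subsets of a single vector-sort power; I expect no obstacle beyond keeping track of the ambient sorts.
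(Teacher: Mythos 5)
Your proof is correct and follows essentially the same route as the paper: all four items are reduced via the injection $f_{k,(e_1,\dots,e_m)}$ to the corresponding statements for subsets of $V^k$ established in Lemma \ref{lemma_rk} and Theorem \ref{thm_dim}, and for (4) the paper likewise passes to the graph $Z=\{(x,y):y=f(x)\}$ (after embedding both $X$ and $Y$ into a common $V^k$) and applies Lemma \ref{lemma_rk}(c) to the projection onto the $Y$-coordinate. The only cosmetic difference is bookkeeping: the paper first equalizes the ambient sorts of $X$ and $Y$ before forming the graph, whereas you apply $g_1$, $g_2$ separately — both amount to the same argument.
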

\begin{proof}
 (1) Consider any formula $\phi(x,y;z)$ where $x$ is a variable of the sort $V^k$ and $y$ is a variable of the sort $K^m$, and any $s\in S_{\lin}$. Let $x'$ be a variable of the sort $V^{k+1}$ and put $$\psi(x';z,e_1,\dots,e_m)=(x'\in \im (f_{k,(e_1,\dots,e_m)})\wedge \phi(f_{k,(e_1,\dots,e_m)}^{-1}(x');z)), $$ where $e_1,\dots,e_m$ are some linearly independent vectors from $V$. By Theorem \ref{thm_dim}(b) there is a formula $\chi_s(z,e_1,\dots,e_m)$ such that, for any $z$, $$\models \chi_s(z,e_1,\dots,e_m)\iff \dim(\psi(x';z,e_1,\dots,e_m))=s\iff \dim(\phi(x,y;z))=s.$$
 As this holds for any linearly independent vectors $e_1,\dots,e_m$, we may replace the formula $\chi_s(z,e_1,\dots,e_m)$ by the $L(\emptyset)$-formula
 $\exists_{v_1,\dots,v_m}(\theta_m(v_1,\dots,v_m)\wedge\chi_s(z,v_1,\dots,v_m))$\\
 (2) Suppose $X\subseteq Y\subseteq V^k\times K^m$. Then $f_{k,(e_1,\dots,e_m)}[X]\subseteq f_{k,(e_1,\dots,e_m)}[Y]$, so $$\dim(X)=\dim(f_{k,(e_1,\dots,e_m)}[X])\leq \dim(f_{k,(e_1,\dots,e_m)}[Y])=\dim(Y)
 $$ by Lemma \ref{lemma_rk}(a). \\
 (3) This follows by Lemma \ref{lemma_rk}(b) using the injection $f_{k,(e_1,\dots,e_m)}$ again.\\
 (4) As any subset of $V^k\times K^m$ is in a definable bijection with a subset of $V^{k'}\times K^{m'}$ for any $k'\geq k, m'\geq m$, we may assume (by modifying $X$, $Y$ and $f$) that there are $k$ and $m$ such that $X,Y\subseteq V^{k-1}\times K^m$. Then applying $f_{k-1,(e_1,\dots,e_m)}$ we may assume $X,Y\subseteq V^{k}$. Put $$Z:=\{(x,y)\in X\times Y : y=f(x)\},$$ and let $\pi_1: X\times Y\to X$ and $\pi_2:X\times Y\to Y$ be the projections. Note that $\dim(X)=\dim(Z)$ as $\pi_1|_{Z}:Z\to X$ is a definable  bijection. Moreover, for any $y\in Y$ we have $X_y=f^{-1}(y)$ has dimension $s$.  Thus, by Lemma \ref{lemma_rk}(c) we have  $\dim(X)=\dim(Z)=$\\$=\dim(\pi_2[Z])+s=\dim(Y)+s.$
 \end{proof}
 Note the above properties correspond to the main properties of Morley rank in strongly minimal theories (and of the topological dimension in RCF) listed in Fact \ref{fact_rm}. However, a major difference is that the set of values of $\dim$ is not well ordered. Nevertheless,  if we work with a fixed finite tuple of variables and a fixed finite tuple of parameters, the set of possible values of $\dim$ is finite.

\begin{remark}\label{rk_rm}
If $X\subseteq V^k$ is definable in $T^*_\infty	$ then $\dim(X)\in\omega $
if and only if $X\subseteq (V_0)^k$ for some finite dimensional $K(\C)$-linear subspace $V_0$ of $V$. 
Moreover, if these equivalent conditions hold and $*=$ACF$_p$ then $\dim(X)=[\RM(X)]$.
\end{remark}
\begin{proof}
 The implication from right to left follows from Remark \ref{biint}.
  
 Assume $\dim(X)=[d]\in\omega $. Then, for each $i\in\{1,\dots, k\}$, the projection $\pi_i(X)$ of $X$ on the $i$-th coordinate must have finite dimension (bounded by $d$), so, by Fact \ref{fact_def}(3), there is some finite dimensional $V_i\leq V$ such that $\pi(X)\subseteq V_i$. This means that $X\subseteq (\Sigma_i V_i)^k$ so we can put $V_0:=\Sigma_i V_i$.
 
 The `moreover' clause now follows by Remark \ref{biint} again.
\end{proof}

Finally, we define the dimension of a type. As the set 
of values of $\dim$ in $T^*_\infty$ is not well ordered, in general we need to use its Dedekind completion $\bar{S_{\lin}}$. 
\begin{definition}\label{def_dim_tp}
 Let $\pi(x)$ be a partial finitary type (i.e. $x$ is a finite tuple of variables) in $T^*_\infty$ over a set $A$. We put $$\dim(\pi(x)):=\inf_{\pi(x)\vdash \phi(x)\in L(\C)}\dim(\phi(x))=\inf_{\pi(x)\vdash \phi(x)\in L(A)}\dim(\phi(x))\in \bar{S_{\lin}}.$$
 Note that $\dim(\pi(x))\in S_{\lin}$ if  $A$ contains only finitely many vectors, as in this case the dimension of any formula in $x$ over $A$ belongs to the finite set $D_{l(x),l(A)}$.
\end{definition}

\begin{proposition}\label{rk_extension}
 Let $\pi(x)$ be a partial finitary type in $T^*_{\infty}$ over $A$. Then there exists $p(x)\in S(A)$ with $\dim(p(x))=\dim(\pi(x))$.
\end{proposition}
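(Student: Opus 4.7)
The plan is a Zorn's lemma argument on the family
\[
\mathcal{F} := \{\sigma : \sigma \supseteq \pi,\ \sigma \text{ a consistent partial type over } A,\ \dim(\sigma) = \dim(\pi)\}.
\]
Clearly $\pi \in \mathcal{F}$, so $\mathcal{F}$ is nonempty. To verify the chain condition, let $(\sigma_i)_{i \in I}$ be a chain in $\mathcal{F}$ and put $\sigma = \bigcup_i \sigma_i$. Since $\sigma \supseteq \pi$, any $L(\C)$-formula implied by $\pi$ is implied by $\sigma$, so $\dim(\sigma) \leq \dim(\pi)$. Conversely, any $\phi \in L(A)$ implied by $\sigma$ is implied by a finite subfamily of $\sigma$, which is contained in some $\sigma_i$ by linearity of the chain; therefore $\sigma_i \vdash \phi$, and since $\dim(\sigma_i) = \dim(\pi)$ (infimum), we have $\dim(\phi) \geq \dim(\pi)$. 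Taking the infimum over all such $\phi$ gives $\dim(\sigma) \geq \dim(\pi)$, so $\sigma \in \mathcal{F}$. By Zorn's lemma, $\mathcal{F}$ has a maximal element $p$.

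The crux of the proof is to show that $p$ is complete, i.e.\ $p \in S(A)$. Suppose for contradiction there is an $L(A)$-formula $\phi$ such that both $p \cup \{\phi\}$ and $p \cup \{\neg\phi\}$ are consistent proper extensions of $p$ (if one is inconsistent, the corresponding negation is already implied by $p$). By maximality, neither extension lies in $\mathcal{F}$, so there exist $L(A)$-formulas $\psi_1, \psi_{-1}$ with $p \cup \{\phi\} \vdash \psi_1$ and $p \cup \{\neg\phi\} \vdash \psi_{-1}$, and $\dim(\psi_1) < \dim(\pi)$, $\dim(\psi_{-1}) < \dim(\pi)$ in $\bar{S_{\lin}}$. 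Then $p \vdash \psi_1 \vee \psi_{-1}$, and by Corollary \ref{big_col}(3),
\[
\dim(\psi_1 \vee \psi_{-1}) = \max(\dim(\psi_1), \dim(\psi_{-1})) < \dim(\pi),
\]
contradicting $\dim(p) = \dim(\pi)$.

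The only potentially subtle point is that $\dim(\pi)$ a priori lies in the Dedekind completion $\bar{S_{\lin}}$ rather than in $S_{\lin}$ itself, so one needs to check that the strict inequalities $\dim(\psi_i) < \dim(\pi)$ behave well under $\max$. This is not a real obstacle: $\dim(\psi_i)$ is the dimension of an actual consistent $L(A)$-definable set, hence lies in $S_{\lin}$, and the $\max$ of two elements of $S_{\lin}$ each strictly less than a given element of $\bar{S_{\lin}}$ is still strictly less than it. Thus the argument goes through and yields the desired complete type $p \in S(A)$ with $\dim(p) = \dim(\pi)$.
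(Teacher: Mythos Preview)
Your proof is correct and uses the same key ingredient as the paper, namely that $\dim(\psi_1\vee\psi_{-1})=\max(\dim(\psi_1),\dim(\psi_{-1}))$ (Corollary~\ref{big_col}(3)). The paper's argument is organized slightly more directly: rather than invoking Zorn's lemma, it sets $p_0(x):=\pi(x)\cup\{\neg\phi(x)\in L(A):\dim(\phi(x))<\dim(\pi(x))\}$, checks consistency of $p_0$ by exactly the computation you perform in your completeness step, and then takes any completion of $p_0$.
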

\begin{proof}
 Put $$p_0(x):=\pi(x)\cup \{\neg\phi(x)\in L(A): 
 \dim(\phi(x))<\dim(\pi(x))  \}.$$
 
 We claim that $p_0(x)$ is consistent. For if not, then there is a finite $\pi_0(x)\subseteq \pi(x)$ and formulas $\phi_1(x),\dots,\phi_n(x)$ such that  $\dim(\phi_i(x))<\dim(\pi(x))$ for every $i$ and $\bigwedge\pi_0(x)\wedge(\bigwedge_{1\leq i\leq n}\neg\phi_i(x))$ is inconsistent. 
Then $\models \bigwedge\pi_0(x)\rightarrow \bigvee_{1\leq i\leq n} \phi_i(x) $.
But, by Lemma \ref{lemma_rk}(b), $$\dim(\pi_0)=\dim((\bigwedge \pi_0)\wedge\bigvee_{1\leq i\leq n} \phi_i(x))=
\max_{1\leq i\leq n} \dim(\pi_0\wedge\phi_i(x))<\dim(\pi(x)),$$ which is a contradiction. 

Hence $p_0$ is consistent, and we can take $p$ to be any completion of $p_0$.
\end{proof}

\begin{notation}
 For $s,s'\in S_{lin}$ we will write: \begin{itemize}
    \item     $s\sim s'$ if $s-s'\in \{[d]:d\in \mathbb{Z}\}$, \item $s\lesssim s'$ if $s\sim s'$ or 
 $s\leq s'$, \item   $s\ll s'$ if $s\leq s'$ and $\neg (s\sim s')$.
  \end{itemize}
\end{notation}

\begin{definition}\label{def_dimgen}
 (1) We write $\dim(a/b)$ to mean $\dim(\tp(a/b))$. By the discussion in Definition \ref{def_dim_tp}, if $a$ and $b$ are finite, then $\dim(a/b)\in S_{lin}$.\\
 (2)
 If $X$ is a set (type)-definable over $a$ and $b\supseteq a$, then we say that an element $c\in X$ is \emph{generic} [quasi-generic] in $X$ over $b$ if $\dim(c/b)=\dim(X)$ [$\dim(c/b)\sim \dim(X)$]. 
 \end{definition}
 By Proposition \ref{rk_extension}, for any $X$ definable over $a$ and any $b\supseteq a$ there exists a generic in $X$ over $b$ (in some model of $T^*_\infty$ containing $b$).
 If $b$ is finite, then such a generic can be found in $\C$, as we are assuming that $\C$ is $\aleph_0$-saturated.

%

\section{Lascar's equality and the connection between $\dim$ and $\dim_{\Lin}$}\label{sect_lascar}
The following additivity property is an analogue of Lascar's equality, which holds, for example, for Morley rank in strongly minimal theories (and more generally, for Lascar $U$-rank assuming the the ranks in the statement are finite).

\begin{proposition}\label{lascar}(Lascar's equality for $\dim$)
 If $a,b,c\in \C$ are finite tuples, then $\dim(ab/c)=\dim(a/bc)+\dim(b/c)$.
\end{proposition}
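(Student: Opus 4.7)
The proof plan is to establish the two inequalities $\dim(ab/c) \leq \dim(a/bc) + \dim(b/c)$ and $\dim(ab/c) \geq \dim(a/bc) + \dim(b/c)$ separately, using as main tools the $\emptyset$-definability of $\dim$ (Corollary \ref{big_col}(1)), its additivity for definable surjections with constant-dimension fibres (Corollary \ref{big_col}(4)), the realisability of the infimum in Definition \ref{def_dim_tp} (which is available because finite tuples of parameters yield values in the finite set $D_{k,l}$), and Proposition \ref{rk_extension}. Write $s := \dim(a/bc)$ and $t := \dim(b/c)$; both lie in $S_{\lin}$ since $a,b,c$ are finite.

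For the upper bound, I would first choose a formula $\phi(x,y,z)$ over $\emptyset$ such that $\models \phi(a,b,c)$ and $\dim(\phi(x,b,c)) = s$ (the infimum is achieved because the possible dimensions of formulas in $x$ over $bc$ form the finite set $D_{l(x),l(bc)}$). By Corollary \ref{big_col}(1), the condition ``$\dim(\phi(x,y,c)) = s$'' is a $c$-definable condition $\chi(y,c)$ on $y$, and $b$ satisfies it. Similarly, pick $\psi(y,c) \in \tp(b/c)$ with $\dim(\psi(y,c)) = t$, and replace $\psi$ by $\psi \wedge \chi$ (still in $\tp(b/c)$, still of dimension $t$ by monotonicity and maximality of $t$). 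Now let
\[
W := \{(x,y) : \phi(x,y,c) \wedge \psi(y,c) \wedge \chi(y,c)\}.
\]
Every $y \in \psi(\mathfrak{C},c)$ lifts to some $(x,y) \in W$ (because $\phi(x,y,c)$ is nonempty, having dimension $s$), so the projection $\pi_2 : W \to \psi(\mathfrak{C},c)$ is surjective with every fibre of dimension $s$. By Corollary \ref{big_col}(4), $\dim(W) = s + t$, and since $(a,b) \in W$, this gives $\dim(ab/c) \leq s+t$.

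For the lower bound, I would show that every formula $\theta(x,y,c) \in \tp(ab/c)$ satisfies $\dim(\theta(x,y,c)) \geq s+t$. Set $s'' := \dim(\theta(x,b,c))$; since $\theta(x,b,c) \in \tp(a/bc)$, we have $s'' \geq s$. By definability of $\dim$ (Corollary \ref{big_col}(1)), the set $\eta(y,c) := \{y : \dim(\theta(x,y,c)) = s''\}$ is $c$-definable and contains $b$, so $\eta \in \tp(b/c)$ and hence $\dim(\eta(\mathfrak{C},c)) \geq t$. The restriction of $\theta$ to $y \in \eta$ is a definable set whose projection onto the second coordinate equals $\eta(\mathfrak{C},c)$ with every fibre of dimension exactly $s''$. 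Applying Corollary \ref{big_col}(4) once more yields
\[
\dim(\theta(x,y,c)) \;\geq\; \dim\bigl(\{(x,y) : \theta(x,y,c) \wedge \eta(y,c)\}\bigr) \;=\; s'' + \dim(\eta(\mathfrak{C},c)) \;\geq\; s+t.
\]
Taking the infimum over all such $\theta$ yields $\dim(ab/c) \geq s+t$, completing the proof.

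The main subtlety I expect is in the lower bound: the fibres of $\theta(x,y,c)$ over varying $y$ need not have a constant dimension, which is why one has to stratify by the specific value $s''$ realised at $y=b$ rather than trying to apply additivity to the whole of $\theta$. Everything else is a straightforward combination of definability of $\dim$ with the additivity in Corollary \ref{big_col}(4); no results beyond Section \ref{sect_dim} are needed.
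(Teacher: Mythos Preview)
Your proof is correct and follows essentially the same approach as the paper's own proof: both inequalities are obtained by picking a formula witnessing the relevant dimension, stratifying by the fibre dimension via the $\emptyset$-definability of $\dim$ (Corollary \ref{big_col}(1)), and then applying the additivity property Corollary \ref{big_col}(4) to the resulting set with constant-dimension fibres. The only cosmetic difference is that the paper treats the inequality $\dim(ab/c)\geq\dim(a/bc)+\dim(b/c)$ first, but the arguments match yours almost verbatim.
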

\begin{proof}
 First, we will show that $\dim(ab/c)\geq\dim(a/bc)+\dim(b/c)$.
 Consider any formula $\phi(x,y)\in tp(ab/c)$. Then $\phi(x;b,c)\in tp(a/bc)$, so $s:=\dim(\phi(x;b,c))\geq \dim(a/bc)$. Now, by Corollary \ref{big_col}(1) there is a formula $\chi(y;c)$ over $c$ such that  $$ \models \chi(d;c) \iff \dim(\phi(x;d;c))=s$$ for any $d$ compatible with $y$.  Then $\chi(y;c)\in tp(b/c)$, so $t:=\dim(\chi(y;c))\geq \dim(tp(b/c))$. Now, by Corollary \ref{big_col}(4) applied to $\phi(x,y;c)\wedge \chi(y;c)$ and the projection on the $y$-coordinate, we get that $\dim(\phi(x,y;c))\geq \dim(\phi(x,y;c)\wedge \chi(y;c))=s+t\geq \dim(a/bc)+\dim(b/c)$. This shows that $\dim(ab/c)\geq\dim(a/bc)+\dim(b/c)$.
 
 Now, choose a formula $\psi(x;b,c)\in tp(a/bc)$ such that $s':=\dim(\phi(x;b,c))=\dim(a/bc)$. Again by \ref{big_col}(1), there is a formula $\chi(y;c)$ over $c$ such that  $$ \models \chi(d;c) \iff \dim(\psi(x;d,c)=s')$$ for any $d$ compatible with $y$. Clearly $ \chi(y;c)\in tp(b/c)$ so if we choose $\xi(y;c)\in tp(b/c)$ such that $t':=\dim(\xi(y;c))=\dim(b/c)$, then we also have $\chi(y;c)\wedge \xi(y;c)\in tp(b/c)$, hence $\dim(\chi(y;c)\wedge \xi(x;c))=t'$.
  Now applying Corollary \ref{big_col}(4) to the formula $$\delta(x,y;c):=\psi(x,y;c)\wedge \chi(y;c)\wedge \xi(y;c)$$ and the projection on the $y$-coordinate, we get $\dim(\delta(x,y;c))=s'+t'$.
  As $\delta(x,y;c)\in tp(ab/c)$, we conclude that $\dim(ab/c)\leq s'+t'=\dim(a/bc)+\dim(b/c)$.
\end{proof}

\begin{proposition}\label{lin_dim}
 If $a,b$ are finite tuples and $\dim(a/b)=[d_0+d_1n]$, then $d_1$ is equal to the linear dimension $\dim_{\Lin}(a/b)$ of $V(a)$ over $V(b)$, that is, the size of a maximal subset of $V(a)$ which is $K$-linearly independent over $\Lin_K(V(b))$.
\end{proposition}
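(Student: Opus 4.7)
My plan is to prove this by induction on the length of the tuple $a$, using Lascar's equality (Proposition \ref{lascar}) to reduce to the case of a single element. For $a$ of length one, I will split into three subcases based on whether $a$ is a scalar, a vector lying in $\Lin_K(V(b))$, or a vector outside it, and show the $d_1$-coefficient equals $\dim_{\Lin}(a/b)$ in each. The inductive step then follows because both the $d_1$-coefficient of $\dim$ (by Lascar) and the linear dimension $\dim_{\Lin}$ (by the standard short exact sequence of quotients of $K$-vector spaces) distribute over the decomposition $a=(a',a_k)$.

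The first two subcases are easy. If $a$ is a scalar, then the ambient sort $K$ embeds via $f_{0,(e_1)}$ into the one-dimensional subspace $\Lin_K(e_1)\subseteq V$, so by Remark \ref{rk_rm} one has $\dim(K)\in\omega$, and hence $\dim(a/b)\in\omega$ and $d_1=0=\dim_{\Lin}(a/b)$. If $a$ is a vector in $\Lin_K(V(b))$, write $a=\sum_i\alpha_i w_i$ with $w_i\in V(b)$ and $\alpha_i\in K$; then the formula $\exists\alpha_1,\ldots,\alpha_m\in K\ \bigl(x=\sum_i\alpha_i w_i\bigr)$ lies in $\tp(a/b)$ and defines the finite-dimensional subspace $\Lin_K(w_1,\ldots,w_m)$, which by Remark \ref{rk_rm} has $\dim\in\omega$. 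Again $d_1=0=\dim_{\Lin}(a/b)$.

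The crucial subcase is $a\in V\setminus \Lin_K(V(b))$, where I expect $d_1=1$. I will pick an approximating sequence $M=\bigcup^a_r N_r$ containing $a$ and $b$, with $b\subseteq N_R$ and $a\in N_{R+1}$. By Fact \ref{fact_dim0}, the formula $\phi_{a,R}(x)$ isolates $\tp(a/N_R)$, and by Corollary \ref{cor_1}, $\dim(\phi_{a,R})=[n-R-1]$. Since $b\subseteq N_R$, one has $\tp(a/b)\subseteq\tp(a/N_R)$, so $\phi_{a,R}$ entails every formula $\psi\in\tp(a/b)$, giving $\dim(\psi)\geq [n-R-1]$. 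Taking the infimum yields $\dim(a/b)\geq[n-R-1]$, forcing $d_1\geq 1$. On the other hand, Theorem \ref{thm_dim}(a) ensures $\dim(\psi)\in D_{1,l(b)}$ for every such $\psi$, and the definition of $D_{1,l(b)}$ forces $d_1\leq 1$; therefore $d_1=1=\dim_{\Lin}(a/b)$.

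The main obstacle is precisely this third subcase: the isolating formula $\phi_{a,R}$ from Fact \ref{fact_dim0} lives over $N_R$, not over $b$, so it cannot be used directly as an element of $\tp(a/b)$. The resolution is to use isolation one-sidedly, propagating the lower bound $\dim(\phi_{a,R})=[n-R-1]$ to all formulas in $\tp(a/b)$, and then to cap $d_1\leq 1$ using the $D_{1,l(b)}$-constraint supplied by Theorem \ref{thm_dim}(a). Once the base case is settled, the inductive step reads off Lascar's equality: writing $a=(a',a_k)$,
$$d_1(\dim(a/b))=d_1(\dim(a_k/a'b))+d_1(\dim(a'/b))=\dim_{\Lin}(a_k/a'b)+\dim_{\Lin}(a'/b)=\dim_{\Lin}(a/b),$$
where the last equality is the additivity of linear dimension for the quotient $\Lin_K(V(ab))/\Lin_K(V(b))$ filtered by $\Lin_K(V(a'b))$.
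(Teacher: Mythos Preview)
Your argument is correct in outline and yields a valid proof, but there is one technical point in the third subcase that you glossed over: to invoke Fact~\ref{fact_dim0} and Corollary~\ref{cor_1} you need $a\notin V(N_R)$, not merely $a\in N_{R+1}$, and it is not entirely immediate from Fact~\ref{gross} that an approximating sequence can be chosen with $V(b)\subseteq N_R$ and $a\notin N_R$. This can be arranged (e.g.\ embed $V(b)\cup\{a\}$ into a nondegenerate $V_1$ of dimension $2(l(b)+1)$, and find $w\in V_1$ with $w\perp V(b)$, $[w,a]\neq 0$, $[w,w]\neq 0$, so that $V_0:=w^\perp\cap V_1$ is a nondegenerate hyperplane containing $V(b)$ but not $a$), but it deserves a sentence. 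Alternatively, you can bypass this by quoting Fact~\ref{fact_def}(3) directly: since $a\notin\Lin_K(V(b))$, the realizations of $\tp(a/b)$ (described by Fact~\ref{dcl}(1)) are not contained in any finite-dimensional subspace, so no $\psi\in\tp(a/b)$ has $\Dim(\psi)<\infty$, forcing $d_1\geq 1$.

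Your approach differs from the paper's in how the two inequalities are organised. The paper proves $d_1\leq\dim_{\Lin}(a/b)$ in one stroke, without induction, by exhibiting a single $b$-definable injection of a formula in $\tp(a/b)$ into $V^l\times K^{m+(k-l)(l+p)}$ (where $l=\dim_{\Lin}(a/b)$); the bound $d_1\leq l$ then falls out of $\dim(V^l\times K^N)=[N+ln]$. For $d_1\geq l$ the paper inducts on $l$ via Lascar, with the base case handled by Fact~\ref{fact_def}(3). You instead run a single induction on the length of $a$ treating both inequalities at once, using the constraint $\dim(\psi)\in D_{1,l(b)}$ (hence $d_1\leq 1$) for the upper bound in the base case. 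Your route is arguably cleaner in that it avoids constructing the injection and treats the two directions symmetrically; the paper's injection argument, on the other hand, is more explicit and gives slightly more information about the $d_0$-coefficient.
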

\begin{proof}
Put $l:=\dim_{\Lin}(a/b)$ and let $(a_1,\dots,a_k)$ be all vectors in  $a$, and let $(c_1,\dots,c_m)$ be all scalars in $a$. We may assume $(a_1,\dots,a_l)$ is a maximal $K$-linearly independent over $\Lin_K(V(b))$ subtuple of $a$. Write $V(b)=\{b_1,\dots,b_p\}.$
Let $\phi(x_1,\dots,x_k)$ be a formula over $b$ expressing that $x_{l+1},\dots,x_{k}\in \Lin_K(V(b),x_1,\dots,x_l)$, and let $f:\phi(\C)\times K^m\to V^l\times K^m\times K^{(k-l)(l+p)}$ be a map sending a tuple 
$(x_1,\dots,x_k,y_1,\dots,y_m)$ to $(x_1,\dots,x_l,y_1,\dots,y_m,A)$ where $A$ is an $ (l+p)\times (k-l)$-matrix such that $A(x_1,\dots,x_l,b_1,\dots,b_p)^T=(x_{l+1},\dots,x_k)$. Then $f$ is a $b$-definable injection of $\phi(\C)\times K^m$ into $V^l\times K^{m+(k-l)(l+p)}$.
As $a\models \phi(x_1,\dots,x_k)$, we get  $$\dim(a/b)\leq \dim(\phi(\C)\times K^m)\leq 
\dim(V^l\times K^{m+(k-l)(l+p)})=[m+(k-l)(l+b)+ln].$$
This shows that $d_1\leq l=\dim_{\Lin}(a/b)$.

It is left to prove that $d_1\geq \dim_{\Lin}(a/b)$, which we do  by induction on $\dim_{\Lin}(a/b)$. If $\dim_{\Lin}(a/b)=1$ then $\dim(a/b)\geq \dim(a_1/b)$ and $a_1\notin \Lin_K(V(b))$ so $\dim (a_1/b)$ is infinite by Fact \ref{fact_def}(3), i.e. $d_1\geq 1$. Now for the inductive step use Fact \ref{fact_def}(3) together with Lascar's equality.
\end{proof}

together with  Lascar's equality are key ingredients in the proof of Theorem \ref{thm_gps}.

\begin{corollary}\label{cor_int}
 For any finite tuples $a,b,c$ we have  $\dim(a/b)\sim \dim(a/bc)$ if and only if $\Lin_K(V(ab))\cap \Lin_K(V(bc))=\Lin_K(V(b))$.
\end{corollary}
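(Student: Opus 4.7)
The plan is to translate both sides of the claimed equivalence into a statement about $K$-linear dimensions of subspaces of $V$, and then to deduce it from the standard inclusion-exclusion formula for the dimension of a sum of two subspaces.

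First, I would recall that, by definition, $s \sim s'$ means that $s-s'$ is of the form $[d]$ for some integer $d$, i.e.\ the ``linear'' coefficient of $n$ in $s$ and $s'$ agree. So, writing $\dim(a/b) = [d_0 + d_1 n]$ and $\dim(a/bc) = [d_0' + d_1' n]$, we have $\dim(a/b) \sim \dim(a/bc)$ if and only if $d_1 = d_1'$. By Proposition \ref{lin_dim}, $d_1 = \dim_{\Lin}(a/b)$ and $d_1' = \dim_{\Lin}(a/bc)$, so the equivalence to prove becomes
\[
\dim_{\Lin}(a/b) = \dim_{\Lin}(a/bc) \iff \Lin_K(V(ab)) \cap \Lin_K(V(bc)) = \Lin_K(V(b)).
\]

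Next, I would expand both linear dimensions as differences of dimensions of finite-dimensional $K$-subspaces of $V$:
\[
\dim_{\Lin}(a/b) = \dim_K \Lin_K(V(ab)) - \dim_K \Lin_K(V(b)),
\]
\[
\dim_{\Lin}(a/bc) = \dim_K \Lin_K(V(abc)) - \dim_K \Lin_K(V(bc)).
\]
Since $\Lin_K(V(abc)) = \Lin_K(V(ab)) + \Lin_K(V(bc))$, the standard formula
\[
\dim_K(U_1 + U_2) = \dim_K U_1 + \dim_K U_2 - \dim_K(U_1 \cap U_2)
\]
applied to $U_1 = \Lin_K(V(ab))$ and $U_2 = \Lin_K(V(bc))$ gives
\[
\dim_{\Lin}(a/bc) = \dim_K \Lin_K(V(ab)) - \dim_K\bigl( \Lin_K(V(ab)) \cap \Lin_K(V(bc)) \bigr).
\]

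Comparing the two expressions, the equality $\dim_{\Lin}(a/b) = \dim_{\Lin}(a/bc)$ is equivalent to
\[
\dim_K \Lin_K(V(b)) = \dim_K\bigl( \Lin_K(V(ab)) \cap \Lin_K(V(bc)) \bigr).
\]
Since the inclusion $\Lin_K(V(b)) \subseteq \Lin_K(V(ab)) \cap \Lin_K(V(bc))$ is automatic and both sides are finite-dimensional, this equality of dimensions is equivalent to the equality of the two subspaces themselves, which is exactly the right-hand side of the claim. I do not expect any real obstacle here; the only mild care is to observe that $\sim$ sees only the slope, so everything reduces cleanly to the elementary linear-algebraic identity above.
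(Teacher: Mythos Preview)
Your proof is correct. Both you and the paper reduce the statement via Proposition~\ref{lin_dim} to the purely linear-algebraic equivalence $\dim_{\Lin}(a/b)=\dim_{\Lin}(a/bc) \iff \Lin_K(V(ab))\cap\Lin_K(V(bc))=\Lin_K(V(b))$, and this is where the arguments diverge slightly in style: the paper proves each direction by hand, choosing a maximal subtuple of $V(a)$ independent over $\Lin_K(V(bc))$ and chasing an arbitrary element of the intersection, whereas you invoke the inclusion--exclusion formula $\dim_K(U_1+U_2)=\dim_K U_1+\dim_K U_2-\dim_K(U_1\cap U_2)$ to get both directions at once. The two arguments are equivalent in content; yours is a bit more streamlined, while the paper's is perhaps more self-contained in that it does not name the dimension formula explicitly.
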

\begin{proof}
 If $\Lin_K(V(ab))\cap \Lin_K(V(bc))=\Lin_K(V(b))$ then any  tuple $(a_1,\dots,a_d)$ of elements of $V(a)$ which is $K$-linearly independent over $\Lin_K(V(b))$ is also linearly independent over $\Lin_K(V(bc))$, so $\dim(a/b)\sim \dim(a/bc)$ by Proposition \ref{lin_dim}.
 
 Conversely, assume $\dim(a/b)\sim \dim(a/bc)$ and let $(a_1,\dots,a_d)$ be a maximal tuple of elements of $V(a)$ which is $K$-linearly independent over $V(bc)$. By  Proposition \ref{lin_dim} and the assumption, $(a_1,\dots,a_d)$ is also maximal $K$-linearly independent over $V(b)$. Hence, any element of $\Lin_K(V(ab))\cap \Lin_K(V(bc))$ is of the form $\Sigma_{i\leq d}\alpha_i a_i+b_1=c_1$ for some $\alpha_i\in K$, $b_1\in \Lin_K(V(b))$, and $c_1\in  \Lin_K(V(bc))$, so 
 $\Sigma_{i\leq d}\alpha_i a_i=c_1-b_1\in \Lin_K(V(bc))$. As $(a_1,\dots,a_d)$ is linearly independent over $\Lin_KV(bc)$, we get that $\Sigma_{i\leq d}\alpha_i a_i=0$ and $\Sigma_{i\leq d}\alpha_i a_i+b_1=b_1\in \Lin_K(V(b))$. Thus, $\Lin_K(V(ab))\cap \Lin_K(V(bc))=\Lin_K(V(b))$.
\end{proof}

\section{Finiteness of multiplicity and its consequences}\label{sect_mlt}

In this section we will define multiplicity of a set definable in $T_\infty$ in analogy with Morley degree and we will prove that the multiplicity of any set definable in $T_\infty$ is finite.
We will deduce that any group interpretable in $T_\infty$ which has finite Morley rank is definable in $T_\infty$, and hence is an algebraic group over $K$ (we will also prove an analogous result for $T^{RCF}_\infty$), as well as some other consequences of finiteness of multiplicity, including definability of generic types in $T_\infty$.

\begin{definition}
Let $X$ be definable in $T_\infty$. We let the multiplicity of $X$, written $\Mlt(X)$, be the maximal number $m\in \omega$ such that there are pairwise disjoint definable sets $X_1,\dots,X_m$ with $X_i\subseteq X$ and $\dim(X_i)=\dim(X)$ for each $i\in \{1,\dots,m\}$ if such a number $m$ exists, and $\infty$ otherwise. 
\end{definition}

\begin{proposition}\label{prop_mlt}
We work in $T_\infty$.\\
 (1) If $X\subseteq Y$ and $\dim(X)=\dim(Y)$ then $\Mlt(X)\leq \Mlt(Y)$.\\
 (2) If $\dim(X_1)<\dim(X_2)$, then $\Mlt(X_1\cup X_2)=\Mlt(X_2)$.\\
 (3) If $\dim(X_1)=\dim(X_2)=s$ then $Mtl(X_1\cup X_2)\leq \Mlt(X_1)+\Mlt(X_2)$, and equality holds when $\dim(X_1\cap X_2)<s$.\\
 (4) If $f:X\to Y$ is a definable function such that $\Mlt(Y)=m\in \omega$ and there are $s\in S_{\lin}$ and $m'\in \omega$ such that $\dim(f^{-1}(y))=s$ and $\Mlt(f^{-1}(y))\leq m'$ for every $y\in Y$, then $\Mlt(X)\leq mm'$.\\
 (5) If $\dim(X)\in \omega$ then $\Mlt(X)=\DM(X)$.
\end{proposition}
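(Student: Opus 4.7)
The approach will be to handle the first three clauses by formal manipulations using Corollary \ref{big_col}, reserve the bulk of the work for part (4), and reduce part (5) to a computation inside a finite-dimensional reduct that is bi-definable with the field of scalars.

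Parts (1)--(3) should follow directly from the properties of $\dim$ already established. For (1), any disjoint family $X_1,\dots,X_m\subseteq X$ with $\dim(X_i)=\dim(X)=\dim(Y)$ witnesses $\Mlt(Y)\geq m$. For (2), I will use Corollary \ref{big_col}(3) to note that $\dim(X_1\cup X_2)=\dim(X_2)$, and given a disjoint family $Y_1,\dots,Y_k$ in $X_1\cup X_2$ with $\dim(Y_i)=\dim(X_2)$, the condition $\dim(Y_i\cap X_1)\leq \dim(X_1)<\dim(X_2)$ forces $\dim(Y_i\cap X_2)=\dim(X_2)$, producing a disjoint family inside $X_2$ of the same size. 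For (3), given a witnessing family in $X_1\cup X_2$, I split the indices via $A=\{i:\dim(Y_i\cap X_1)=s\}$ and $B=\{1,\dots,k\}\setminus A$; the same dimension-additivity argument forces $\dim(Y_i\cap X_2)=s$ for $i\in B$, bounding $|A|\leq \Mlt(X_1)$ and $|B|\leq \Mlt(X_2)$. The equality case follows by combining optimal partitions of $X_1$ and $X_2\setminus X_1$, noting that $\Mlt(X_2)=\Mlt(X_2\setminus X_1)$ by (2) since $\dim(X_1\cap X_2)<s$.

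The main obstacle is (4). Let $Z_1,\dots,Z_k\subseteq X$ be disjoint of dimension $\dim(X)=\dim(Y)+s$ (by Corollary \ref{big_col}(4)), and define the definable sets
\[ Y_i:=\{y\in Y:\dim(Z_i\cap f^{-1}(y))=s\}, \]
using $\emptyset$-definability of $\dim$ (Corollary \ref{big_col}(1)). I will first show $\dim(Y_i)=\dim(Y)$: partitioning $Z_i$ according to the (finitely many) values $s'$ of $\dim(Z_i\cap f^{-1}(f(\cdot)))$ and applying Corollary \ref{big_col}(4) piece by piece yields $\dim(Z_i)=\max_{s'}(\dim(\{y:\dim(Z_i\cap f^{-1}(y))=s'\})+s')$; since each term is at most $\dim(Y)+s'\leq \dim(Y)+s$, the maximum must be attained at $s'=s$ with $\dim(Y_i)=\dim(Y)$. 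Next, for each fixed $y\in Y$ the sets $\{Z_i\cap f^{-1}(y):y\in Y_i\}$ are disjoint top-dimensional subsets of $f^{-1}(y)$, so at most $\Mlt(f^{-1}(y))\leq m'$ indices satisfy $y\in Y_i$. Then I form the definable partition of $Y$ by sets $W_S:=\{y\in Y:\{i:y\in Y_i\}=S\}$, which is empty for $|S|>m'$. Since $Y_i=\bigsqcup_{S\ni i}W_S$ has dimension $\dim(Y)$, for each $i$ at least one $S\ni i$ satisfies $\dim(W_S)=\dim(Y)$. The family $\mathcal{S}:=\{S:\dim(W_S)=\dim(Y)\}$ consists of at most $m=\Mlt(Y)$ sets (they are disjoint subsets of $Y$ of maximal dimension), and each index $i$ appears in some $S\in\mathcal{S}$ of size $\leq m'$, which gives $k\leq |\mathcal{S}|\cdot m'\leq mm'$.

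For (5), since $\dim(X)\in\omega$, Remark \ref{rk_rm} implies that $X$ is contained in some $V_0^k$ with $V_0\leq V$ finite-dimensional over $K$, and likewise for any definable $Y\subseteq X$ with $\dim(Y)=\dim(X)$; moreover $\dim(X)=[\RM(X)]$ and the same holds for such $Y$. By Fact \ref{fact_biint}, the structure induced on $V_0^k$ is bi-definable with the pure field, so (after transferring through a definable bijection to some $K^n$) definable subsets of $X$ in $T_\infty$ coincide with those definable in $(K,+,\cdot)$, and the Morley ranks agree. Hence for definable $Y\subseteq X$, $\dim(Y)=\dim(X)\iff \RM(Y)=\RM(X)$, so the witnessing families for $\Mlt(X)$ and $\DM(X)$ coincide, yielding $\Mlt(X)=\DM(X)$.
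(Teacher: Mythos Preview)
Your proof is correct. Parts (1)--(3) and (5) match the paper's treatment, which simply says these follow from the definition of $\Mlt$, Corollary \ref{big_col}, and Remark \ref{rk_rm}; your write-up just supplies the routine details.

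For (4) you take a genuinely different route. The paper first reduces to the case $\Mlt(Y)=1$ by choosing a maximal partition $Y=Y_1\sqcup\cdots\sqcup Y_m$ and applying (3) to the preimages $f^{-1}[Y_j]$; it then assumes for contradiction that there are $m'+1$ disjoint top-dimensional pieces $X_1,\dots,X_{m'+1}\subseteq X$, defines (in your notation) the sets $Y_i$, and uses $\Mlt(Y)=1$ to argue inductively via (2) that $\bigcap_i Y_i$ still has dimension $\dim(Y)$, so some single $y$ lies in all $Y_i$ and witnesses $\Mlt(f^{-1}(y))\geq m'+1$. Your argument avoids the reduction and the intersection step entirely: you keep a general $k$, form the partition of $Y$ by the sets $W_S$, and run a covering count (the top-dimensional $W_S$'s number at most $m$, each has $|S|\leq m'$, and together they cover $\{1,\dots,k\}$). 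Both arguments hinge on the same lemma that $\dim(Y_i)=\dim(Y)$, which you prove more carefully than the paper does. The paper's approach is a bit slicker once one sees the reduction to multiplicity $1$; yours is self-contained and does not rely on (2) and (3) inside the proof of (4).
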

\begin{proof}
 (1),(2), and (3) follow easily from the definition of $\Mlt$ and the properties of $\dim$ (Corollary \ref{big_col}) and (5) follows from Proposition \ref{rk_rm}. 
 
 Let us prove (4).
  Let $Y_1,\dots,Y_{m}$ be sets partitioning $Y$ with $\dim(Y_i)=\dim(Y)$. By (3) applied to the sets $f^{-1}[Y_1],\dots,f^{-1}[Y_m]$ we may assume that $m=1$ and $Y_1=Y$. 
 Suppose for a contradiction that there are pairwise disjoint 
  $X_1,\dots,X_{m'+1}\subseteq X$ with $\dim(X_i)=\dim(X)$.
 For each $i\in \{1,\dots,m'+1\}$ put $Z_i:=\{y\in Y:\dim(f^{-1}(y)\cap X_i)=s\}\subseteq Y$. Then each $Z_i$ is definable by Corollary \ref{big_col}(1) and $\dim(Z_i)=\dim(Y)$ by Corollary \ref{big_col}(4) applied to $f$ and to $f|_{X_i}$. As $\Mlt(Y)=1$, using induction and (2) we easily get that $\dim(\bigcap_{i\in\{1,\dots,m'+1\}}Z_i)=\dim(Y)$. In particular, there exists $y\in \bigcup_{i\in\{1,\dots,m'+1\}}X_i$ and we have that $(f^{-1}[y]\cap Z_i)_{i\in \{1,\dots,m'+1\}}$ are pairwise disjoint subsets of $f^{-1}(y)$ of dimension $s$, a contradiction to $\Mlt(f^{-1}(y))=m'$.
\end{proof}

\begin{theorem}\label{thm_mlt}
We work in $T_\infty$.\\
 (1) For every formula $\phi(x;y)$ over $\emptyset$ there exists $m_{\phi(x;y)}\in \omega$ such that for every $R\in \omega$ and  avery $N\models T_R$ containing $a$ we have 
 $\DM_{N}(\phi(\C,a)\cap N)\leq m_{\phi(x;y)}$.\\
 (2) Every formula in $T_\infty$ has finite multiplicity. 
\end{theorem}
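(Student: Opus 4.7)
Part (2) follows easily from (1): if $\phi(x;a)$ had multiplicity exceeding $m_{\phi(x;y)}$ in $T_\infty$, one would find pairwise disjoint definable $X_1,\dots,X_{m_{\phi(x;y)}+1}\subseteq \phi(\C,a)$ with $\dim(X_i)=\dim(\phi(\C,a))$; choosing an approximating sequence $M=\bigcup^a_r N_r$ containing a tuple defining all the $X_i$ and working in $N_r$ for $r$ large enough that niceness (Theorem~\ref{thm_dim}(a)) applies to $\phi(\C,a)$ and to each $X_i$, the sets $X_i\cap N_r$ become pairwise disjoint subsets of $\phi(\C,a)\cap N_r$ of the same Morley rank, forcing $\DM_{N_r}(\phi(\C,a)\cap N_r)\geq m_{\phi(x;y)}+1$ and contradicting (1). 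So the real content is (1), which I would prove by induction on the number $k$ of vector variables in $x$; mixed $V/K$-tuples cause no essential trouble because each $N\models T_R$ is bi-interpretable with its algebraically closed field of scalars (Fact~\ref{fact_biint}), so $\DM_N$ is just Morley degree in $K(N)$ and a scalar variable merely contributes one field coordinate.

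For the base case $k=1$, Proposition~\ref{correction} tells us that whenever $a\subseteq N_R$ with $R\geq 4l(a)+1$, one has $\DM_{N_r}(\phi(\C,a)\cap N_r)=\DM_{N_{R+1}}(\phi(\C,a)\cap N_{R+1})$ for all $r>R$. Since $l(a)\leq l(y)$, it suffices to bound $\DM_{N_{R'}}(\phi(\C,a)\cap N_{R'})$ uniformly in $a$ for each of the finitely many values $R'\leq 4l(y)+2$. For each such $R'$ the complete theory $T_{R'}$ is bi-interpretable with $ACF_p$, so $\{\phi(\C,a)\cap N_{R'}: a\in N_{R'}\}$ becomes a uniformly definable family in an algebraically closed field, and Fact~\ref{fact_rm}(6) gives a uniform finite bound on its Morley degrees. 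Taking a maximum over the finitely many values of $R'$ furnishes $m_{\phi(x;y)}$ in this case.

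For the inductive step, given $\phi(x_1,\dots,x_{k+1};y)$, I would project onto the last $k$ coordinates via $\pi:V^{k+1}\to V^k$ and set $\phi'(x_1;z):=\phi(x_1;z_1,\dots,z_{k+1})$, so the fibers of $\pi$ on $\phi(\C,a)$ are sets of the form $\phi'(\C;b,a)$. By Theorem~\ref{thm_dim}(b), for each $s$ in the finite set $D_{1,k+l(y)}$ the condition ``$\dim(\phi'(x_1;z))=s$'' is defined by an $\emptyset$-formula $\tilde Y_s(z)$; put $\psi_s(x_2,\dots,x_{k+1};y):=(\exists x_1\,\phi)\wedge \tilde Y_s(x_2,\dots,x_{k+1},y)$. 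Fixing $a$ and $N\models T_R$ containing $a$ with $R$ above an explicit threshold $R_0$ depending only on $k$ and $l(y)$ (large enough to invoke niceness on all fibers with parameters from $V^{k+l(y)}$), niceness guarantees that for every $b\in\psi_s(\C,a)\cap N^k$ the fiber $\phi'(\C;b,a)\cap N$ has Morley rank equal to the single integer $d_0(s)+d_1(s)R$, while the base case applied to $\phi'$ bounds its Morley degree uniformly by some $m_1$. The inductive hypothesis applied to each $\psi_s$ gives $\DM_N(\psi_s(\C,a)\cap N^k)\leq m_2(s)$; Fact~\ref{fact_rm}(9) then yields $\DM_N(\pi^{-1}[\psi_s(\C,a)]\cap \phi(\C,a)\cap N^{k+1})\leq m_1 m_2(s)$, and Fact~\ref{fact_rm}(8) sums these over the finite set $D_{1,k+l(y)}$ to give a bound depending only on $\phi$. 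For $R<R_0$ one argues exactly as in the base case via bi-interpretability and Fact~\ref{fact_rm}(6), with only finitely many values of $R$ to consider.

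The main obstacle will be making the partition alignment work uniformly: niceness (which is what forces all fibers within a $\dim$-piece to share a common Morley rank in $N_R$) only kicks in once $R$ exceeds a threshold depending on the vector-parameter count, so one must cleanly separate the large-$R$ case (handled by the induction above) from the small-$R$ case (handled by Fact~\ref{fact_rm}(6) in bounded-dimension models) and verify that the resulting bound $|D_{1,k+l(y)}|\cdot m_1\cdot \max_s m_2(s)$, together with the small-$R$ bound, can be combined without either depending on $a$, $R$ or $N$.
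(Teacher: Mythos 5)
Your proposal is correct and follows essentially the same argument as the paper: deduce (2) from (1) via niceness of the $X_i$, prove (1) by induction on the number of vector variables, handle small $R$ (and the finite-dimensional case) by bi-interpretability with $ACF_p$ plus Fact~\ref{fact_rm}(6), use Proposition~\ref{correction} for the stabilization of $\DM$ in the base case, and in the inductive step partition $\phi(\C,a)$ according to the $\dim$-value of the $x_1$-fibers (using Theorem~\ref{thm_dim}(b)) and apply Fact~\ref{fact_rm}(8)--(9) to the projection. The only point you leave implicit, which the paper makes explicit via Lemma~\ref{last}, is that $\pi[X_{s}\cap N]=\chi_{s}(\C;a)\cap N$ (i.e., the projection restricted to $N$ remains surjective onto the relevant base), which is needed for Fact~\ref{fact_rm}(9); this does follow from niceness of the fibers exactly as you indicate, so it is not a real gap.
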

\begin{proof}
Using the functions $f_{k,e_1,\dots,e_m}$ (see Definition \ref{def_fk}), we may assume that $x$ is a tuple of $k$ vector variables for some $k\in \omega$. We will now prove the statement by induction on $k$. 

For any fixed $R_0\in\omega$, by quantifier elimination in $T_{R_0}$ and Fact \ref{fact_rm}(6) we easily get a bound on
$\DM_{N}(\phi(\C;a)\cap N)$ with $a\subseteq N\models T_{R_0}$ depending only on $\phi(x;y)$ and on $R_0$.   Also, we know by the proof of Proposition \ref{correction} that if $\dim(\phi(\C;a))\in \omega$ then $\phi(M,a)\subseteq N_{2l(a)}$ for some $M=\bigcup^a_rN_r$ with $a\subseteq N_{2l(a)}$. Hence we may restrict ourselves to considering only $R\geq \alpha(l(x),l(y))$ and $a$ such that $\dim(\phi(\C;a))\notin \omega$.

First, assume $k=1$ so $x$ is a single vector variable. 
By Proposition \ref{correction}, if $R\geq \alpha(1,l(y))$ (so $R> 4l(a)+1$),  $a\in N\models T_R$ and $\Dim(\phi(\C;a))\notin \omega$, then  $\DM_{N}(\phi(\C;a)\cap N)$  is equal to $\DM_{N'}(\phi(\C;a)\cap N')$ for any $N'\models T_{4l(a)+2}$ containing $a$ with $K(N)=K(M)$ and $N'\subseteq N$. This, in turn, is bounded independently from $a$ and $N$ by quantifier elimination  in $T_{4l(a)+2}$ and Fact \ref{fact_rm}(6), which completes the proof when $k=1$. 

Now, assume that $k\geq 1$ and  we have numbers $m_{\psi(x_1,\dots,x_i;y)}$ satisfying the assertion for  each $\phi(x_1,\dots,x_i;y)$ with $x_1,\dots,x_i$ being single variables of the sort $V$ and $i\leq k$. Consider any $\phi(x;y)$ with $x=(x_1,\dots,x_{k+1})$, where each $x_i$ is a single variable of the sort $V$. 
 As every definable set is nice, there are at most $D:=|D_{1,k+l(y)}|$ possibilities $s_1,\dots,s_D\in D_{1,k+l(y)}$ on $\rk(\phi(x_1;v_2,\dots,v_{k+1},w))$
for $v_2,\dots,v_{k+1},w\in \C$ with $\phi(x_1;v_2,\dots,v_{k+1},w)\neq \emptyset$. 
For each $i\leq D$ let $\chi_{s_i}(x_2,\dots,x_{k+1},y)$ be a formula over $\emptyset$ such that $$\models \chi_{s_i}(v_2,\dots,v_{k+1},w)\iff \dim(\phi(x_1;v_2,\dots,v_{k+1},w))=s_i.$$
Put $$m_{\phi(x_1,\dots,x_{k+1};y)}:=\Sigma_{i\leq D}(m_{\phi(x_1;x_2,\dots,x_{k+1},y)}m_{\chi_{s_i}(x_2,\dots,x_{k+1};y)})$$
(the numbers on the right-hand side are already defined by the inductive hypothesis).
Consider any $R\geq \alpha(k+1,l(y))$, $N\models T_R$, and $a\subseteq N$ compatible with $y$.
As $\alpha (k+1,l(y))\geq \alpha(1,k+l(y))$, for every $i\leq D$ there is $t_i\in \omega$ such that for every $v_2,\dots,v_{k+1},w\in N$, if $\models \chi_{s_i}(v_2,\dots,v_{k+1},w)$ then $\RM_{N}(\phi(\C;v_2,\dots,v_{k+1},w)\cap N)=t_i$.

Let $\pi:V^{k+1}\to V^k$ be the projection on the last $k$ coordinates. 
Put $X=\phi(\C;a)$ and $X_{s_i}:=X\cap \pi^{-1}[\chi_{s_i}(\C;a)]$ for each $i\leq D$.

 By Lemma \ref{last} we get (as in the proof of Lemma \ref{lemma_rk}(c)) that $\pi[X_{s_i}\cap N]=\pi[X_{s_i}]\cap N=\chi_{s_i}(\C;a)\cap N$.
Also, for any $v_2,\dots,v_{k+1}\in \pi[(X_{s_i}\cap N)]$ we know  that $$\DM((\pi|_{X_{s_i}\cap N})^{-1}(v_2,\dots,v_{k+1}))=\DM( N\cap  {\pi|_{X_{s_i}}}^{-1}(v_2,\dots,v_{k+1}))=$$ $$=\DM(N\cap\phi(\C;v_2,\dots,v_{k+1},a)) \leq m_{\phi(x_1;x_2,\dots,x_{k+1},y)}.$$
Thus, by Fact \ref{fact_rm}(8) and by Fact \ref{fact_rm}(9) applied to the functions
$\pi|_{X_{s_i}\cap N}$ we have $$\DM_{N}(X\cap N)\leq \Sigma_{i\leq D}\DM_N(X_{s_i}\cap N)\leq \Sigma_{i\leq D}(m_{\phi(x_1;x_2,\dots,x_{k+1},y)}\DM_{N}(\chi_{s_i}(\C;a)\cap N))\leq$$ $$\leq \Sigma_{i\leq D}(m_{\phi(x_1;x_2,\dots,x_{k+1},y)}m_{\chi_{s_i}(x_2,\dots,x_{k+1};y)}),$$ 
as required. This completes the induction.
\\
(2) Choose $M=\bigcup^a_r N_r$ containing $a$. Let $m:=m_{\phi(x,y)}$ be the number given by (1). We claim that $\Mlt(\phi(x,a))\leq m$. If not, then there exist pairwise disjoint sets $X_1,\dots,X_{m+1}\subseteq \phi(M,a)=:X$ definable in $M$ over some finite $b\subseteq M$. Let $R\geq \alpha(l(x),l(ab))$ be such that each $X_i$ is definable over $N_R$ and $ab\subseteq N_R$.
Then, as $X$ and all $X_i$'s are nice, we have  $\RM_{N_R}(X_i\cap N_R)=\RM_{N_R}(X\cap N_R)$ for every $i\leq m+1$, so $\DM_{N_R}(X\cap N_R)>m$, which contradicts the choice of $m$.
 \end{proof}

\begin{corollary}\label{cor_elim}
 If $G$ is a group definable in $T^*_\infty$ and $H\triangleleft G$ is a definable normal subgroup such that $\dim(G)-\dim(H)\in \omega$, then $G/H$ is definably isomorphic to a definable in $T^*_\infty$ group of finite dimension.
\end{corollary}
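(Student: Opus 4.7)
The hypothesis $\dim(G)-\dim(H) = d \in \omega$ combined with Lascar's equality (Proposition \ref{lascar}) means that the space of cosets $G/H$ is ``$d$-dimensional'' in a precise sense. By Remark \ref{rk_rm}, any definable set of finite dimension in $T^*_\infty$ sits inside $V_0^k \times K^m$ for some finite-dimensional subspace $V_0 \le V$, and by Fact \ref{fact_biint} such sets are interdefinable with sets definable in the underlying field $K$, which enjoys full elimination of imaginaries. The plan is to produce a definable (in $T^*_\infty$) map $f : G \to X$ with $X \subseteq V_0^k \times K^m$ of dimension $d$, whose fibres are exactly the cosets of $H$; pushing the group law across $f$ then gives the required finite-dimensional definable group.

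For the $T_\infty$ case, I would proceed as follows. Fix a finite $A$ over which both $G$ and $H$ are defined. The family of cosets $\{gH : g \in G\}$ is uniformly definable by $\psi(x;y) := y^{-1}x \in H$, and by Theorem \ref{thm_mlt} the multiplicity of this formula is bounded by some fixed $m_0$. Combining this bound with Proposition \ref{prop_mlt} and Lascar's equality, the ``canonical parameter'' of $gH$ (a priori an imaginary) has dimension $d \in \omega$ over $A$ and bounded multiplicity. The key technical step is to show that such a canonical parameter is interdefinable with a tuple in $V_0^k \times K^m$ for some finite-dimensional $V_0$: finiteness of multiplicity allows one to select, uniformly in $g$, a finite tuple of vectors and scalars (e.g.\ coordinates in a chart witnessing the finite-dimensional quotient) that separate distinct cosets. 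The resulting $f : G \to V_0^k \times K^m$ is definable and $H$-invariant, factoring through an injection from $G/H$ onto its image $X$; by Fact \ref{fact_biint} and EI of $K$, $X$ is definable as a subset of some $K^n$, and the group law on $G/H$ pushes forward to a definable group law on $X$.

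For $T^{RCF}_\infty$ the analogous plan must replace finiteness of multiplicity by a topological/$o$-minimal argument, as foreshadowed in the introduction. Here $G/H$ has finite $o$-minimal dimension $d$, and I would exploit definable Skolem functions in $o$-minimal theories, together with cell decomposition applied inside the definable family of cosets, to pick a uniform representative tuple in a finite-dimensional piece. Once the canonical-parameter map lands in such a piece, EI of RCF and the same pushforward-of-group-structure argument complete the proof.

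The main obstacle is precisely the absence of EI in $T^*_\infty$: without it, $G/H$ is a priori only an imaginary and cannot be taken as a definable sort directly. The substance of the proof is forcing the imaginary code of each coset to be interdefinable with a real tuple from a finite-dimensional fragment --- via finite multiplicity in the $T_\infty$ case, and via topological/$o$-minimal considerations in the $T^{RCF}_\infty$ case --- so that elimination of imaginaries for the field $K$ can finish the job.
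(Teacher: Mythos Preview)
Your outline identifies the right obstacle (lack of EI in $T^*_\infty$) and the right endpoint (representing $G/H$ inside a finite-dimensional piece so that EI of $K$ applies), but the bridge between them is not actually built. You assert that ``finiteness of multiplicity allows one to select, uniformly in $g$, a finite tuple of vectors and scalars \dots\ that separate distinct cosets,'' yet give no mechanism for this: bounded multiplicity of the formula $y^{-1}x\in H$ tells you nothing directly about coding its canonical parameter by a real tuple, and since $\dim$ is only defined on real sorts, the phrase ``the canonical parameter has dimension $d$'' has no a priori meaning. In the RCF case the gap is worse: you invoke definable Skolem functions and cell decomposition ``in $o$-minimal theories,'' but $T^{RCF}_\infty$ is \emph{not} $o$-minimal---only the field sort $K$ is---so those tools are unavailable on $G$ itself.

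The paper's proof supplies the missing idea: work inside an approximating sequence $M=\bigcup^a_r N_r$. Since $G$ and $H$ are nice, $(G\cap N_r)/(H\cap N_r)$ has rank exactly $d$ for all large $r$, and one shows that the natural embeddings $(G\cap N_{r_0})/(H\cap N_{r_0})\hookrightarrow (G\cap N_r)/(H\cap N_r)$ are eventually surjective. In the ACF case this is where the uniform Morley-degree bound (Theorem~\ref{thm_mlt}) is actually used: a non-surjective step would strictly increase the degree of the quotient, which is bounded, so the chain stabilises. In the RCF case one argues instead that the quotient has finite index at each stage, whence each coset is $\acl$-hence-$\dcl$-definable over $N_R$ (using $\acl=\dcl$ and EI in RCF), and then Lemma~\ref{last} forces the coset to meet $N_{r_0}$. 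Once every $H$-coset has a representative in $G\cap N_{r_0}$, the quotient $G/H$ equals $(G\cap\Lin_K(N_{r_0}))/(H\cap\Lin_K(N_{r_0}))$, which lives inside a finite-dimensional piece and is therefore definable in $K$ by EI of the field. Your proposal gestures at the destination but does not contain this stabilisation argument, which is the substance of the proof.
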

\begin{proof}
Put $d:=\dim(G)-\dim(H)\in\omega$.
 Let $a$ be a finite tuple over which $H$ and $G$ are definable in a variable $x$, and choose  $M=\bigcup^a_{r}N_r$ and $R\geq \alpha(l(x),l(a))$ such that $a\subseteq N_R$. Then, by niceness of $H$ and $G$, we have $\rk_{N_r}(G\cap N_r)=\rk_{N_r}(H\cap N_r)+d$ so $\rk_{N_r}((G\cap N_r)/(H\cap N_r))=d$ for every $r\geq R$. Note that $H\cap N_r\triangleleft G\cap N_r$ for every $r\geq R$ as each $N_r$ is $\dcl$-closed.
 \begin{claim}\label{cl_sep}
 There is $r_0\geq R$ such that for every $r\geq r_0$ the  definable embedding of groups
  $$h_{r_0,r}:(G\cap N_{r_0})/(H\cap N_{r_0})\to (G\cap N_{r})/(H\cap N_{r})$$ given by $g(H\cap N_{r_0})\mapsto g(H\cap N_{r})$ is surjective. 
 \end{claim}
\bpfc
 {\bf Case 1}: $*=$ACF$_p$.\\ 
 By Theorem \ref{thm_mlt} we know there is $m$ such that $\DM_{N_r}(G\cap N_r)\leq m$ for every $r\in \omega$, so also $\DM_{N_r}((G\cap N_r)/(H\cap N_r))\leq m$ for any $r\geq R$ by Fact \ref{fact_rm}(9). If $h_{r,r+1}$ is not surjective for some  $r\geq R$, then $h_{r,r+1}[(G\cap N_r)/(H\cap N_{r})]=(G\cap N_r)/(H\cap N_{r+1})$ is a proper subgroup of the group $(G\cap N_{r+1})/(H\cap N_{r+1})$ of the same dimension $d$, so 
 $$\DM_{N_r}((G\cap N_r)/(H\cap N_r))=\DM_{N_{r+1}}((G\cap N_r)/(H\cap N_{r+1}))<$$ $$<\DM_{N_{r+1}}((G\cap N_{r+1})/(H\cap N_{r+1})),$$ so, by boundedness of $\DM_{N_r}((G\cap N_r)/(H\cap N_r))$ (by $m$) there is $r_0\geq R$ such that for every $r\geq r_0$ the embedding $h_{r,r+1}$ is surjective, and so is $h_{r_0,r}=h_{r-1,r}h_{r-2,r-1}\dots h_{r_0,r_0+1}$. \\
 {\bf Case 2}: $*=$RCF.\\
 We claim that $r_0:=R$ works.
 For any $r\geq r_0$ we have that $\dim_t((G\cap N_r)/(H\cap N_r))=d=\dim_t((G\cap N_R)/(H\cap N_R))=\dim_t((G\cap N_R)/(H\cap N_r))$, so, by Fact \ref{fact_rm}(0) and (4), the index $$[(G\cap N_r)/(H\cap N_r):(G\cap N_R)/(H\cap N_R)]= [G\cap N_r:(G\cap N_R)\cdot (H\cap N_r)]$$ is finite. Note that the group $G\cap N_R$ normalises $H\cap N_r$ so 
 $G_0:=(G\cap N_R)\cdot (H\cap N_r)=\{x\cdot y:x\in G\cap N_R, y\in H\cap N_r\}$ is a definable subgroup of $G\cap N_r$. Now for any $g\in G\cap N_r$ the  coset $g G_0\in (G\cap N_r)/G_0$ is algebraic in $N_r$ over $N_R$. As RCF eliminates imaginaries and algebraic closure coincides with definable closure in RCF, this implies that the coset $g\cdot G_0$ is definable over $N_R$, hence also over $N_{r_0}$. Also, $g\cdot G_0$ is definable  over $a,g$, so, as $r_0\geq 4l(x)+2l(a)$, we get by Lemma \ref{last} that $g\cdot G_0\cap N_{r_0}\neq \emptyset$.
 This shows that $h_{r_0,r}$ is surjective, which completes the proof of the claim.
 \epf
 By the claim, for every $g\in G(M)$ there is $g'\in 
 G\cap N_{r_0}$ with $gH=g'H$. As $M\prec \C$, we must also have that for every $g\in G(\C)$ there is $g'\in G\cap \Lin_{K(\C)}(N_{r_0})$ with $g/H=g'/H$, so $$G/H=(G\cap \Lin_{K(\C)}(N_{r_0}))/(H\cap   \Lin_{K(\C)}( N_{r_0}))$$	 is definable in $K(\C)$ (by elimination of imaginaries in $K$), and hence it is definable in $\C$ and has finite dimension.
\end{proof}

\begin{remark}\label{rm_2}
 If $X$ is definable in $T_\infty$ and $E$ is a definable equivalence relation on $X$ such that $\RM(X/E)<\omega$ (in $\C$ expanded by the sort $X/E$ and the quotient map $X\to X/E$), then for every $s\in S_{lin}$ for which there is $x\in X$ with $\dim(x_E)=s$ we have $ \dim(X_s) -s\in \omega$, where $X_s=\{x\in X:\dim(x_E)=s\}$.
\end{remark}
\begin{proof}
 Put $l:=\RM(X/E)$, and let $M=\bigcup^a_{r\in \omega}N_r$ with $X$ and $E$ definable over some finite $b\subseteq N_R$ for some $R\geq \alpha(\l(x),l(x)+l(b))$, where $x$ is a variable in which $X$ is definable. If $\dim(X_s)=[d_0+d_1n]$ and $s=[d_0'+d_1'n]$ with $d_1'<d_1$, then for every $r\geq R$  we have $\RM_{N_r}(X_s\cap N_r)>l+d_0'+d'_1r$ by Lemma \ref{lemma_geq},  and $\dim(x_E)\cap N_r=d_0'+d_1'r$ for each $x\in X\cap N_r$ by niceness of the $xb$-definable set $x_E$. But, as $\RM_{N_r}(X_s\cap N_r/E)\leq \RM(X/E)=l$, we get by Fact \ref{fact_rm}(4) applied to the quotient map $X_s\cap N_r\to (X_s\cap N_r)/E$  that $$\RM_{N_r}(X_s\cap N_r)=d_0'+d_1'r+\RM_{N_r}((X_s\cap N_r)/E)\leq l+d_0'+d_1r,$$  (note $(X_s\cap N_r)/E$ is definable in $N_r$ by elimination of imaginaries in ACF$_p$). This is a contradiction.
\end{proof}

\begin{corollary}\label{tfae}
Let  $G$  be a group definable in $T^*_\infty$ and let $H\triangleleft G$ be a definable normal subgroup. Then, the following are equivalent:\\
(1) $\dim(G)-\dim(H)\in \omega$.\\
(2) $G/H$ is definably isomorphic to a definable in $T^*_\infty$ group of finite dimension (hence to an algebraic group over $K$ if $*=$ACF$_p$ and to a semialgebraic group over $K$ if $*=RCF$).\\
If $*=$ACF$_p$, then these conditions are also equivalent to:\\
(3) $\RM(G/H)<\omega$.
\end{corollary}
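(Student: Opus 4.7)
The proof splits naturally into the three implications (1)$\Rightarrow$(2), (2)$\Rightarrow$(1), and, in the ACF$_p$ case, the equivalence of these with (3). The implication (1)$\Rightarrow$(2) is already proved as Corollary \ref{cor_elim}, so there is nothing to do there.

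For (2)$\Rightarrow$(1), suppose a definable bijection $\iota\colon G/H\to G'$ identifies the interpretable quotient with a definable group $G'$ of finite dimension. Composing with the quotient map gives a definable surjection $f\colon G\to G'$, whose fibres are precisely the cosets of $H$ and hence are in definable bijection with $H$. By Remark \ref{remark_bij} every fibre has dimension $\dim(H)$, so Corollary \ref{big_col}(4) yields $\dim(G)=\dim(G')+\dim(H)$. Since $\dim(G')\in\omega$ by Remark \ref{rk_rm}, we conclude $\dim(G)-\dim(H)=\dim(G')\in\omega$.

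Now assume $*={}$ACF$_p$. For (1)$\Rightarrow$(3), apply Corollary \ref{cor_elim} to produce a definable isomorphism from $G/H$ to a definable group $G'$ of finite dimension; then Remark \ref{rk_rm} gives $\RM(G')=\dim(G')\in\omega$, and Morley rank is preserved by the isomorphism, so $\RM(G/H)<\omega$. For (3)$\Rightarrow$(1), apply Remark \ref{rm_2} with $X=G$ and with $E$ the equivalence relation whose classes are the cosets of $H$. Every class has the same dimension $s:=\dim(H)$, so $X_s=G$; since $\RM(X/E)=\RM(G/H)<\omega$, Remark \ref{rm_2} yields $\dim(G)-\dim(H)=\dim(X_s)-s\in\omega$.

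I do not expect any real obstacle here: each implication is essentially an immediate application of a result already established in the paper (Corollary \ref{cor_elim}, Corollary \ref{big_col}(4), Remark \ref{rk_rm}, and Remark \ref{rm_2}), and the only care needed is to handle the fact that $G/H$ is \emph{a priori} an interpretable set by composing with the assumed definable bijection to $G'$ before invoking the additivity of $\dim$.
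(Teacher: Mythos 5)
Your proof is correct and follows the paper's own argument in all essentials: (1)$\Rightarrow$(2) is Corollary \ref{cor_elim}; (2)$\Rightarrow$(1) uses Corollary \ref{big_col}(4) on the quotient map after identifying $G/H$ with a definable group; (3)$\Rightarrow$(1) is Remark \ref{rm_2} applied to the coset relation. The only cosmetic difference is that you prove (1)$\Rightarrow$(3) directly (by threading Corollary \ref{cor_elim} and Remark \ref{rk_rm} together) where the paper records it as (2)$\Rightarrow$(3) via Remark \ref{rk_rm}, but these are the same computation; and the appeal to Remark \ref{rk_rm} in your (2)$\Rightarrow$(1) step is superfluous since ``finite dimension'' already means $\dim(G')\in\omega$ by definition.
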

\begin{proof}
(1) implies (2) by Corollary \ref{cor_elim}. 
(2) implies (3) by Remark \ref{rk_rm} and it  implies (1) as well by Corollary \ref{big_col}(4) applied to the quotient map $G\to G/H$ (where we identify $G/H$ with a definable group definably isomorphic to it).
(3) implies (1) by Remark \ref{rm_2} applied to the equivalence relation $E$ on $G$ given by: $E(g,g')\iff gH=g'H$. 
 \end{proof}
From finiteness of multiplicity in $T_\infty$, we also conclude definability of generic types.

\begin{proposition}\label{cor_mlt}
(1) Let $X$ be definable in $T_\infty$ over a model $M$. Put $m=\Mlt(X)$. Then there are exactly $m$ complete generic types in $X$ over $M$.\\
(2) Let $M\models T_\infty$ and let $p(x)\in S(M)$ be such that $\dim(p(x))\in S_{lin}$. Then $p(x)$ is definable. Hence, each generic type in every definable set is definable.
\end{proposition}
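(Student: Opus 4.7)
Both parts hinge on finiteness of multiplicity (Theorem \ref{thm_mlt}).

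For (1), start with the lower bound. By definition of $\Mlt(X)=m$ there exist pairwise disjoint definable $X_1,\dots,X_m\subseteq X$ of dimension $\dim(X)$; by $\emptyset$-definability of $\dim$ (Corollary \ref{big_col}(1)), the existence of such a partition is a first-order condition on the defining parameters, so it holds already in $M\prec \C$, and we may assume the $X_i$ are $M$-definable. Applying Proposition \ref{rk_extension} to each partial type $\{x\in X_i\}$ over $M$ yields $p_i\in S(M)$ with $X_i\in p_i$ and $\dim(p_i)=\dim(X)$; these are pairwise distinct and generic in $X$. Conversely, given $m+1$ pairwise distinct generics $p_1,\dots,p_{m+1}\in S(M)$, for each $i\neq j$ pick $\phi_{ij}(x)\in p_i$ over $M$ with $\neg\phi_{ij}\in p_j$, and set $\psi_i(x):=(x\in X)\wedge\bigwedge_{j\neq i}\phi_{ij}(x)\in p_i$. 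The $\psi_i(\C)$ are pairwise disjoint subsets of $X$ of dimension $\dim(X)$ (bounded above since $\psi_i\subseteq X$, and below since $\psi_i\in p_i$), contradicting $\Mlt(X)=m$.

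For (2), I would first note that the infimum defining $\dim(p)=s$ is attained by some formula in $p$ when $s\in S_{\lin}$: writing $s=[d_0+d_1n]$, the minimum of $S_{\lin}$ strictly above $s$ is $[(d_0+1)+d_1n]$ (since $S_{\lin}$ is lex-ordered by $(d_1,d_0)$), so if no $\phi\in p$ had $\dim(\phi)=s$, then every $\phi\in p$ would satisfy $\dim(\phi)\geq[(d_0+1)+d_1n]$, forcing $\dim(p)\geq[(d_0+1)+d_1n]>s$, a contradiction. Choose $\chi(x)\in p$ with $\dim(\chi)=s$ minimizing $\Mlt(\chi)$ (a positive integer by Theorem \ref{thm_mlt}). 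The key claim is $\Mlt(\chi)=1$: otherwise part (1) provides a second generic $q\neq p$ of $\chi$ over $M$; picking $\psi\in p$ over $M$ with $\neg\psi\in q$, the formula $\chi\wedge\psi$ lies in $p$ with $\dim(\chi\wedge\psi)=s$, and every generic of $\chi\wedge\psi$ over $M$ is a generic of $\chi$ distinct from $q$, so $\Mlt(\chi\wedge\psi)<\Mlt(\chi)$ by part (1), contradicting minimality.

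With $\Mlt(\chi)=1$ in hand, for any formula $\phi(x,y)$ and $b\in M$ I claim $\phi(x,b)\in p$ iff $\dim(\chi(x)\wedge\phi(x,b))=s$: the forward direction is immediate, while if $\neg\phi(x,b)\in p$ then $\dim(\chi\wedge\neg\phi(x,b))=s$ by the same argument, and also having $\dim(\chi\wedge\phi(x,b))=s$ would exhibit two disjoint subsets of $\chi$ of maximal dimension, contradicting $\Mlt(\chi)=1$. The right-hand side is definable in $b$ over the parameters of $\chi$ (which lie in $M$) by Corollary \ref{big_col}(1), yielding the $\phi$-definition of $p$. Finally, if $p$ is generic in a definable set $X$, then $\dim(p)=\dim(X)\in S_{\lin}$ by Theorem \ref{thm_dim} (and Definition \ref{def_fk}), so the definability statement applies. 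The main obstacle is the minimization step establishing $\Mlt(\chi)=1$, which uses part (1) in both directions — existence of a distinct generic is converted into a strictly smaller multiplicity via a separating formula, and this drop is then reconverted using part (1); attainment of the infimum defining $\dim(p)$ in $S_{\lin}$ also requires some care with the lex structure.
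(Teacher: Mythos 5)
Your proof is correct and follows the paper's strategy in outline. Two remarks are worth making. In part (1), the sets $\psi_i(\C)$ as you defined them are not automatically pairwise disjoint: $\psi_i$ forces $\phi_{ij}$ and $\psi_j$ forces $\phi_{ji}$, but nothing guarantees $\phi_{ij}\wedge\phi_{ji}$ is inconsistent. The easy fix is either to take $\phi_{ji}:=\neg\phi_{ij}$ for $j>i$, or to strengthen $\psi_i$ by also conjoining $\neg\phi_{ji}$ for each $j\neq i$ (which still lies in $p_i$ since $\neg\phi_{ji}\in p_i$); the paper just invokes the standard fact that distinct complete types over $M$ can be separated by pairwise inconsistent $M$-formulas. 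In part (2) you obtain a multiplicity-one formula in $p$ by minimizing $\Mlt(\chi)$ over all $\chi\in p$ with $\dim(\chi)=s$ and using part (1) twice to force a strict drop in multiplicity, whereas the paper takes any $\phi\in p$ of dimension $s$, partitions it (using definability of $\dim$ and Proposition \ref{prop_mlt}) into $\Mlt(\phi)$ pairwise disjoint $M$-definable pieces of dimension $s$, notes that each such piece has multiplicity $1$ and that one of them must belong to $p$. Both routes are fine; the partition route is a little more direct and avoids the back-and-forth through part (1). Finally, your explicit verification that the infimum defining $\dim(p)$ is attained when $\dim(p)\in S_{\lin}$ — using that every element $[d_0+d_1 n]$ of $S_{\lin}$ has the immediate successor $[(d_0+1)+d_1 n]$ — makes precise a step the paper states without proof.
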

\begin{proof}
 (1) Suppose first that there are $m+1$ distinct generics $p_1,\dots,p_{m+1}\in S(M)$ in $X$. Let $\phi(x)$ be a formula over $M$ defining the set $X$. Choose pairwise inconsistent formulas $\phi_i(x)\in p_i$ for $i\leq m+1$. Then, as $\phi_i(x)\wedge \phi(x)\in p_i$ for each $i$, we must have $\dim(\phi_i(x)\wedge \phi(x))=\dim(X)$ as each $p_i$ is generic in $X$. This shows that $\Mlt(X)\geq m$, a contradiction.
 
On the other hand, by definability of $\dim$ (Corollary \ref{big_col}(1)) we can find pairwise disjoint $X_1,\dots,X_m\subseteq X$ definable over $M$ with $\dim(X_i)=X$ for each $i$, and choose a generic $p_i\in X_i$ for each $i$. Then $p_i$'s are pairwise distinct generics in $X$.\\
 (2) As $\dim(p(x))\in S_{\lin}$, we can choose $\phi(x)\in p(x)$ such that $\dim(\phi(x))=\dim(p(x))$. By definability of $\dim$ there are formulas $\phi_1(x),\dots,\phi_m(x)$ over $M$ of dimension $\dim(\phi(x))$ which partition $\phi(x)$, and one of them must belong to $p(x)$. So we may assume $\Mlt(\phi(x))=1$. Now consider any formula $\psi(x;y)$. Then for any $a\subseteq M$ compatible with $y$  we have that $\psi(x,a)\in p(x)$ iff $\dim(\psi(x;a)\wedge \phi(x))=\dim(\phi(x))$: If $\psi(x,a)\in p(x)$ then $\psi(x;a)\wedge \phi(x)\in p(x)$ so $\dim(\psi(x;a)\wedge \phi(x))=\dim(\phi(x))$; conversely, if the latter holds, then the generic type in  $\psi(x;a)\wedge \phi(x)$ over $M$ is also generic in $\phi(x)$, so is equal to $p(x)$ by (1), as $\Mlt(\phi(x))=1$. Thus $\psi(x;a)\in p(x)$. 
 
 As the condition $\dim(\psi(x;a)\wedge \phi(x))=\dim(\phi(x))$ is definable by Corollary \ref{big_col}(1), we get that $p(x)$ is a definable type. 
\end{proof}

\section{Definable groups and fields}\label{sec_gps}
In this section we will prove our main results about groups and fields definable in $T^*_\infty$. Let us start with some examples. Clearly, any algebraic group over the field of scalars $K$ is definable in $T_\infty$ and any semialgebraic group over $K$ is definable in $T^{RCF}_\infty$. Another class of examples is obtained from the natural actions of linear algebraic groups over $K$ on Cartesian powers of the (infinite-dimensional) vector space $V$:
\begin{example}\label{examp}
 Let $M=(V,K)$ be a model of $T^*_\infty$ and $k\in \omega$.\\
 (1) Suppose $H\leq GL_k(K)$ is a linear algebraic group. Consider the semidirect product $G:=V^k\rtimes H$, where the action of $H$ on $V^k$ is induced by scalar multiplication.
Then $G$ is definable in $M$ in a natural way, with its universe being a definable subset of  $V^k\times K^{k^2}$ consisting of pairs $(v,A)$ with $v\in V^k$ and $A\in H$. \\
(2) Let $(G,\cdot)$ be the Heisenberg group of $[,]$, that is, $G=V\times V\times K$ and $(v,w,a)\cdot (v',w',a')=(v+v',w+w', a+a'+[v,w'])$ for $(v,w,a), (v',w',a')\in G$. Then $(G,\cdot)$ is  definable in $M$ (in an obvious way).
\end{example}

We say a definable group $G$ is \emph{connected} if it has no definable subgroup of finite index.
\begin{remark}
 Every group definable in $T_\infty$ has a connected component, that is, a definable connected subgroup of finite index.
\end{remark}
\begin{proof}
 Let $M$ be a model over which $G$ is definable. By Proposition \ref{cor_mlt} there are only finitely many generic types in $G$ in $S(M)$. Let $p_1,\dots,p_m$ be all of them. Then for any $i\leq m$  and $g\in G(M)$ we have that $g\cdot p_i(x):=
 \{\phi(g^{-1}\cdot x):\phi(x)\in p_i(x)\}\in S(M)$ is also a generic in $G$, so $G$ acts naturally on $p_1,\dots,p_m$. Let $G_0$ be the kernel of this action. Now, if we choose pairwise inconsistent $\phi_i(x)\in p_i(x)$ of dimension $\dim(G)$ and multiplicity $1$, then $G_0=\{g\in G: \bigwedge_i \dim(\phi_i(x)\wedge \phi_i(g^{-1}\cdot x))=\dim(G)\}$ (cf. the proof of Proposition \ref{cor_mlt}), so $G_0$ is definable by Corollary \ref{big_col}. As $[G:G_0]<\omega$, we must have $\dim(G_0)=\dim(G)$. Now only one of the types $p_1,\dots,p_m$ contains the formula `$x\in G_0$', as otherwise we would have some $g_i\in G_0\cap \phi_i(M)$ and $g_j\in G_0\cap \phi_j(M)$ for $i\neq j$, so $g_ig_j^{-1}\cdot p_j=p_i$, a contradiction, as $g_ig_j^{-1}\in G_0$.
 Hence $G_0$ has only one generic type, and so $\Mlt(G_0)=1$ by Proposition \ref{cor_mlt}. This clearly implies that $G_0$ is connected.
\end{proof}
By a [semi] algebraic group in our context we mean a group interpretable in $T^*_\infty$ which is definably isomorphic to a [semi] algebraic group over the field of scalars $K$. Thus, for example, although the group $(V,+)$ might be abstractly isomorphic to the group $(K,+)$ in a particular model $(K,V)\models T_\infty$, it is not an algebraic group in our sense, as there is no definable bijection between $V$ and any set definable in $K$. Accordingly, we say that a definable group $G$ is ([semi] algebraic-by-abelian)-by-[semi] algebraic, if there are definable $N\triangleleft G$ and $N_0\triangleleft N$ such that $N_0$ and $G/N_0$ are [semi] algebraic and $N/N_0$ is abelian.

Let $g,h\in G$ where $G$ is a group.  We will usually write the product of $g$ and $h$ as $gh$ omitting the multiplication symbol. To avoid confusion with a pair, below we will use commas in tuples.
By $g^h$ we mean the conjugate $hgh^{-1}$ of $g$ by $h$, and by $[g,h]$ we mean the commutator $ghg^{-1}h^{-1}$ of $g$ and $h$. By $G'$ we denote the commutator subgroup of $G$, that is, the subgroup of $G$ generated by the set $\{[x,y]:x,y\in G\}$. \begin{theorem}\label{thm_gps}
 Let $G$ be a group definable in $T^*_\infty$. Then $G$ is (algebraic-by-abelian)-by-algebraic when $*=$ACF$_p$ and
 (semialgebraic-by-abelian)-by-semialgebraic when $*=$RCF.
\end{theorem}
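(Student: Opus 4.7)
The overall strategy is to reduce the theorem, via Corollary \ref{tfae}, to producing a definable normal subgroup $N\triangleleft G$ with $\dim(N)\sim\dim(G)$ (so that $G/N$ has dimension in $\omega$ and is therefore (semi)algebraic) together with a definable normal subgroup $N_0\triangleleft N$ with $\dim(N_0)\in\omega$ containing the commutator subgroup $[N,N]$ (so that $N_0$ is (semi)algebraic and $N/N_0$ is abelian). The examples in Example \ref{examp} suggest that $N$ should be a largest normal subgroup whose dimension has the same linear part as $\dim(G)$. Since the tools (Corollary \ref{big_col}, Proposition \ref{lascar}, Proposition \ref{lin_dim}, Corollary \ref{tfae}) apply uniformly, I would treat $T_\infty$ and $T^{RCF}_\infty$ simultaneously.

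First I would apply the preceding remark to reduce to the case where $G$ is connected. Then by Proposition \ref{cor_mlt}(1) the generic type $p$ of $G$ over a small $\aleph_0$-saturated elementary submodel $M$ is unique, and by Proposition \ref{cor_mlt}(2) it is definable. The candidate for $N$ is the left-stabilizer $\{a\in G : a\cdot p = p\}$ of $p$. Concretely, picking a formula $\phi(x)\in p$ with $\dim(\phi)=\dim(G)$ and $\Mlt(\phi)=1$, one can take $N:=\{a\in G : \dim(\phi(a^{-1}\cdot x)\wedge \phi(x))=\dim(G)\}$, which is a definable subgroup of $G$ by Corollary \ref{big_col}(1) and normal by uniqueness of $p$. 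A Lascar's-equality argument (Proposition \ref{lascar}) combined with Proposition \ref{lin_dim} then yields $\dim(N)\sim\dim(G)$, so $G/N$ is (semi)algebraic by Corollary \ref{tfae}.

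For $N_0$: take independent generics $n_1,n_2$ of $N$ over $M$. Since each $n_i$ left-fixes the unique generic of $N$, one shows via Corollary \ref{cor_int} and Proposition \ref{lin_dim} that $\Lin_K(V(n_1 n_2 n_1^{-1}))$ agrees with $\Lin_K(V(n_2))$ modulo $\Lin_K(V(n_1))$ on its maximal independent part, yielding $\dim([n_1,n_2]/n_1,n_2)\in\omega$ (the linear part vanishes). By definability of $\dim$ and an indecomposability-style argument, the collection of generic commutators sweeps out a definable normal subgroup $N_0$ of $N$ of integer dimension containing $[N,N]$; then $N/N_0$ is abelian and $N_0$ is (semi)algebraic by Corollary \ref{tfae}.

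The main obstacle is the commutator estimate. Making the informal assertion that $[n_1,n_2]$ has zero linear dimension over $n_1,n_2$ precise requires the explicit description of types in $T^*_\infty$ via the bilinear form (Fact \ref{dcl}) and a careful analysis of what it means, in terms of the values of $[\cdot,\cdot]$ and linear dependencies among $V(n_1),V(n_2)$, for an element to stabilize a generic type; establishing that generic commutators generate a definable (rather than merely type-definable) subgroup then needs an indecomposability theorem adapted to our $S_{\lin}$-valued dimension.
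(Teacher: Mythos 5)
There is a genuine gap, and it is fatal to the approach: your choice of $N$ as the (left) stabilizer of the unique generic type collapses to $N=G$ once $G$ is connected. If $p$ is the unique generic type of a connected $G$ over $M$, then every $a\in G(M)$ sends $p$ to another generic type over $M$, which must equal $p$ by uniqueness; so $G(M)$ is contained in your definable set $N=\{a\in G:\dim(\phi(a^{-1}\cdot x)\wedge\phi(x))=\dim(G)\}$, and since $N$ is definable over $M\prec\C$ this forces $N=G$. You would then need $G$ itself to be (semi)algebraic-by-abelian, which is false: in Example \ref{examp}(1) with $H=K^*$, the commutator subgroup is $V^k\times\{1\}$, infinite-dimensional (cf.\ Remark \ref{optimal}). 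The paper's $N$ is instead an FC-center analogue, $N:=\{x\in G:\dim(C_G(x))\sim\dim(G)\}$, equivalently the set of $x$ whose conjugacy class has finite dimension; for Example \ref{examp}(1) this correctly yields $N=V^k\times\{1\}$. The hard content (Claims \ref{cl2} and \ref{cl3}) is then that $N$ is a definable normal subgroup with $\dim(N)\sim\dim(G)$, proved by producing $g_1^{-1}g_2$ quasi-generic in $G$ that commutes with an $hh_1^{-1}$ of near-full dimension over it, via repeated use of Lascar's equality and a careful choice-of-basis reduction to $\dcl$ computations.

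Your commutator estimate is also misstated: $\dim([n_1,n_2]/n_1,n_2)=0$ holds trivially because $[n_1,n_2]\in\dcl(n_1,n_2)$, so it carries no information. What is actually needed is that for $x_1,x_2\in N$ with $\dim(x_1/x_2,a)\sim\dim(x_1/a)$, the commutator lies in the \emph{fixed} finite-dimensional subspace $A:=\Lin_K(V(a))$. The paper gets this by observing that for $x\in N$ the map $y\mapsto[x,y]$ has fibers that are cosets of $C_G(x)$ of dimension $\sim\dim(G)$, so $\dim([x,G])\in\omega$; hence $[x_1,x_2]$ has finite dimension over $ax_1$ and over $ax_2$, so by Proposition \ref{lin_dim} it lies in $\Lin_K(V(ax_1))^k\cap\Lin_K(V(ax_2))^k$, which equals $A^k$ by Corollary \ref{cor_int}. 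No indecomposability theorem is then required: since $A$ is $\dcl$-closed and finite-dimensional, $N_1:=N\cap A$ is a definable subgroup of $N$ containing $N'$, and $N_2:=\bigcap_{g\in N}(N_1)^g$ is a definable normal subgroup of $N$ sitting inside $A$, hence of finite dimension and (semi)algebraic by Corollary \ref{tfae}. The paper also has no need for the reduction to connected $G$, which anyway would require extra work to upgrade normality in $G_0$ to normality in $G$.
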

\begin{proof}
Let $G$ be a group definable in $T_\infty$ [or in $T^{RCF}_\infty]$ over some finite tuple $a$. 
We may assume that $G\subseteq V^k$ for some $k\in \omega$
and that $a$ is a subtuple of any element of $G$. 

Put $$N:=\{x\in G:\dim(C_G(x))\sim \dim(G)\}.$$

\begin{claim}\label{cl2}
 $N\triangleleft G$ and $N$ is $a$-definable.
\end{claim}
\bpfc
First, we show that $N$ is a subgroup of $G$. Take any $g_1,g_2\in G$.
Let $M=\bigcup^a_{r\in\omega}N_r$ and $R\in \omega$ be such that  $g_1,g_2,a\subseteq  N_R$. Consider any $r\geq R$. Note that $C_G(g_1)\cap N_r=C_{G\cap N_r}(g_1)$ and $C_G(g_2)\cap N_r=C_{G\cap N_r}(g_2)$ are both subgroups of the group $G\cap N_r\leq G$, as $N_r$ is definably closed by Fact \ref{dcl}(2). Hence $(G\cap N_r)/(C_G(g_1)\cap C_G(g_2)\cap N_r)$ embeds $N_r$-definably into $((G\cap N_r)/(C_G(g_1)\cap N_r))\times ((G\cap N_r)/(C_G(g_2)\cap N_r))$  by $$g(C_G(g_1)\cap C_G(g_2)\cap N_r)\mapsto (g(C_G(g_1)\cap N_r),g(C_G(g_2)\cap N_r)).$$ 
So $$\rk_{N_r}((G\cap N_r)/(C_G(g_1)\cap C_G(g_2)\cap N_r))\leq$$ $$\leq \rk_{N_r}((G\cap N_r)/(C_G(g_1)\cap N_r))+\rk_{N_r}((G\cap N_r)/(C_G(g_2)\cap N_r)) .$$ By Fact \ref{fact_rm}(4) applied to the corresponding quotient maps, this means that $$\rk_{N_r}(G\cap N_r)-\rk_{N_r}(C_G(g_1)\cap C_G(g_2)\cap N_r)\leq$$ $$\leq \rk_{N_r}(G\cap N_r)-\rk_{N_r}(C_G(g_1)\cap N_r)+\rk_{N_r}(G\cap N_r)-\rk_{N_r}(C_G(g_2)\cap N_r).$$ As this holds for any $r\geq R$, we get that $$\dim(G)-\dim(C_G(g_1)\cap C_G(g_2))\leq \dim(G)-\dim(C_G(g_1))+\dim(G)-\dim(C_G(g_2))\in \omega.$$

As $C_G(g_1\cdot g_2)\supseteq  C_G(g_1)\cap C_G(g_2)$, we conclude that $\dim(C_G(g_1\cdot g_2))\sim\dim(G)$, so $g_1\cdot g_2\in N$ and $N$ is a subgroup of $G$. Also, for any $g\in G$ and $h\in N$   we have that $\dim (C_G(h^g))=\dim((C_G(h))^g)=\dim(C_G(h))$, as $(C_G(h))^g$ and $C_G(h)$ are in a definable bijection. This shows that $N$ is normal in $G$. Finally, $N$ is $a$-definable by Corollary \ref{big_col}(1).
 \epf

Let $h$ be a generic in $G$ over $a$ and let $g$ be a generic in $G$ over $a,h$.
Write $h=(w_1,\dots,w_k)$ and $g=w_{k+1},\dots,w_{2k}$ (where $w_i\in V$ for each $i\leq 2k$). Let $j_1,\dots,j_l\in \{1,\dots, 2k\}$ be such that $w_{j_1},\dots,w_{j_l}$ is a basis of $W:=\Lin_{K}(w_1,\dots,w_{2k})$ over $K$.

For any $x=(v_1,\dots,v_k)\in G$ there are $i_1,\dots,i_m\in\{1,\dots,k\}$ such that $(v_{i_1},\dots,v_{i_m}, w_{j_1},\dots,w_{j_l})$ is a basis of $\Lin(W,v_1,\dots,v_k)$. As this is expressible by a formula $\phi(x)$ with parameters $h, g$ and there are only finitely many possibilities on the tuple $(i_1,\dots,i_m)\in\{1,\dots,k\}^m$ (with $m\leq k$), by Corollary \ref{big_col}(3) there must be some such tuple for which the set 
$$X:=\{x=(v_1,\dots,v_k)\in G: (v_{i_1},\dots,v_{i_m}, w_{j_1},\dots,w_{j_l})\mbox{ is a basis of }\Lin_K(W,v_1,\dots,v_k)\}$$ is generic in $G$.
We may assume $(i_1,\dots,i_m)=(1,\dots,m)$.
Notice that for any $x=(v_1,\dots, v_k)\in X$ we have $gx^h\in dcl(x,g,h)\subseteq \Lin_K(w_{i_1},\dots,w_{i_l},v_1,\dots,v_m)$, so we can define a function let $f:X\to K^{k(l+m)}$  such that for every $x=(v_1,\dots,v_k)\in X$  $$\mbox{if }f(x)=Y=(Y_1,Y_2)\mbox{ with } Y_1\in M_{l\times k}(K),Y_2\in M_{m\times k}(K)\mbox{ then }$$ $$gx^h=Y_1\cdot (w_{i_1},\dots,w_{i_l})^T+Y_2\cdot (v_1,\dots,v_m)^T.$$ 
As $f$ is a definable function and $\dim(\im(f))\leq [k(l+m)]$, by Corollary \ref{big_col}(4) there must be 
some $C=(C_1,C_2)\in K^{k(l+m)}$ such that $$\dim(f^{-1}(C))\sim \dim(G).$$
Then  for $x=(v_1,\dots,v_k)\in f^{-1}(C)$ we have 
$$ gx^h=C_1\cdot (w_{i_1},\dots,w_{i_l})+C_2\cdot (v_1,\dots,v_m).$$

By Lemma \ref{rk_extension} we can choose $g_1\in f^{-1}(C)$ such that $\dim(g_1/C,h,g,a)=\dim(f^{-1}(C))\sim\dim(G)$, and $g_2\in f^{-1}(C)$ such that 
$\dim(g_2/C,h,g,g_1,a)\sim\dim(G)$. Write $g_1=(v_1,\dots,v_k)$ and $g_2=(v'_1,\dots,v'_k)$. So $$gg_1^h=C_1\cdot (w_{i_1},\dots,w_{i_l})^T+C_2\cdot (v_1,\dots,v_m)^T\mbox{ and }
gg_2^h=C_1\cdot (w_{i_1},\dots,w_{i_l})^T+C_2\cdot (v'_1,\dots,v'_m)^T.$$
So $t:=C_1\cdot (w_{i_1},\dots,w_{i_l})^T=gg_1^h-C_2\cdot (v_1,\dots,v_m)^T\in dcl(g_1,gg_1^h,C,a)$, hence $$gg_2^h=t+C_2\cdot (v'_1,\dots,v'_m)^T\in dcl(g_1,gg_1^h,g_2,C,a).$$
Thus, $$(g_1^{-1}g_2)^h=(gg_1^h)^{-1}gg_2^h\in dcl(g_1,gg_1^h,g_2,C,a).$$

So, choosing $h_1$ to be a generic in $\tp(h/g_1,gg_1^h,g_2,C,a)$ over $h,g_1,gg_1^h,g_2,C,a$, we get 
$$(g_1^{-1}g_2)^h=(g_1^{-1}g_2)^{h_1},$$ hence $$g_1^{-1}g_2\in \C_G(hh_1^{-1}).$$ 
\begin{claim}\label{cl3}
 $\dim(hh_1^{-1}/g_1^{-1}g_2,a)\sim \dim(G)$.
\end{claim}
\bpfc
By Lascar's equality (Proposition \ref{lascar}) we have \begin{align} \dim(h/C,a)\geq \dim(h/a)-\dim(C/a,h) \sim\dim(G) \end{align}
as $h$ is generic in $G$ over $a$ and $\dim(C/a,h)\in \omega$. Also
\begin{align}
 \dim(g_1/h,C,a)\sim \dim(G) 
\end{align}
by the choice of $g_1$. Now, as $g_1$ is quasi-generic over $g,h,a$ we have that $$\dim(g_1,g,h/a)=\dim(g_1/g,h,a)+\dim(g/h,a)+\dim(h/a)\sim 3\dim(G)$$ by Lascar's equality, but also $\dim(g_1,g,h/a)=\dim(g/g_1,h,a)+\dim(g_1,h/a)$, so  $\dim(g/g_1,h,a)\sim \dim(G)$ as $\dim(g_1,h/a)\leq 2\dim(G)$. Hence 
\begin{align}
\dim(gg_1^h/g_1,h,C,a)\sim \dim(gg_1^h/g_1,h,a)=\dim(g/g_1,h,a)\sim \dim(G),
\end{align}
where the equality follows by invariance of $\dim$ under definable bijections. We also have \begin{align}
                                                                                   \dim(g_2/gg_1^h,g_1,h,C,a)\sim \dim(G)                                                                                       \end{align}
by the choice of $g_2$. Now, by (1),(2),(3),(4), and Lascar's equality we have $\dim(g_2,gg_1^h,g_1,h/C,a)\sim 4\dim(G)$. But $\dim(g_2,gg_1^h,g_1,h/C,a)=\dim(h/g_2,gg_1^h,g_1,C,a)+\dim(g_2,gg_1^h,g_1,C,a)$, so $$\dim(h/g_2,gg_1^h,g_1,C,a)\sim \dim(G)$$
as $\dim(g_2,gg_1^h,g_1,C,a)\lesssim 3\dim(G)$. As $h_1$ is generic in $\tp(h/g_1,gg_1^h,g_2,C,a)$ over $(h,g_1,gg_1^h,g_2,C,a)$, it follows that  $$\dim(hh_1^{-1}/h,g_1,gg_1^h,g_2,C,a)=\dim(h_1/h,g_1,gg_1^h,g_2,C,a) \sim \dim(G), $$ so also
$\dim(hh_1^{-1}/g_1^{-1}g_2,a)\sim\dim(G)$
which completes the proof of the claim.
\epf 

As $hh_1^{-1}\in \C_G(g_1^{-1}g_2)$ and   $\dim(hh_1^{-1}/g_1^{-1}g_2,a)\sim \dim(G)$ by Claim \ref{cl3}, we get $\dim C_G (g_1^{-1}g_2)\sim \dim(G)$. This shows that $g_1^{-1}g_2\in N$, so, as $\dim(g_1^{-1}g_2/a)\sim \dim(G)$ and $N$ is definable over $a$, we conclude that $\dim(N)\sim \dim(G)$,
so $G/N$ is an algebraic [semialgebraic] group by Corollary \ref{tfae}.

It is left to show that $N$ is algebraic-by-abelian [semialgebraic-by-abelian].
For any $x\in N$ we have that all fibers of the map 
$G\to [x,G]:=\{[x,y]:y\in G\}$ given by  $y\mapsto [x,y]$
are cosets of $C_G(x)$ and hence they have  dimension  $\dim(G_G(x))\sim \dim(G)$. So, by Corollary \ref{big_col}(4) we get that $\dim([x,G])\in\omega$. Thus, for $x_1,x_2\in N$  the commutator $[x_1,x_2]$ has finite dimension over $a,x_1$ and over $a,x_2$, so  by Proposition \ref{lin_dim}
$[x_1,x_2]\in (\Lin_K(V(a,x_1))\cap \Lin_K(V(a,x_2)))^{k}$. If additionally $\dim(x_1/x_2,a)\sim\dim(x_1/a)$ then, by Corollary \ref{cor_int} we get that $$\Lin_K(V(a,x_1))\cap \Lin_K(V(a,x_2))=\Lin_K(V(a))=:A,$$ so $[x_1,x_2]\in A$.

Now, for arbitrary $y_1,y_2\in N$, as $\dim(y_2\cdot C_G(y_1))=\dim(C_G(y_1))\sim \dim(G)$, we can find $y_2'\in y_2\cdot C_G(y_1)$ with $\dim(y_2'/y_1,a)\sim \dim(G)$, so $[y_1,y_2]=[y_1,y_2']\in A$ by the above paragraph. This shows that $N_0:=\{[y_1,y_2]:y_1,y_2\in N\}\subseteq A$.
Put $N_1:=N\cap A$. As $A$ is definably closed by Fact \ref{dcl}(2), we get that $N_1$ is a definable subgroup of $N$. So, as $N_1\supseteq N_0$, we conclude that $N_1\supseteq N'$.  
Finally, put $$N_2:=\bigcap _{g\in N}(N_1)^g$$
and note that $N_2$ is a definable normal subgroup of $N$ and $N'\leq N_2$, so $N/N_2$ is abelian. Also, $N_2\leq N_1\subseteq A$. But, as $a$ is finite, we have that $A$ is finite-dimensional, so $N_2$ is   algebraic [semialgebraic] over $K(\C)$ by Corollary \ref{tfae}. 
Hence $N$ is [semi]algebraic-by-abelian, and $G$ is ([semi]algebraic-by-abelian)-by-[semi]algebraic.
\end{proof}

\begin{remark}\label{optimal}
 Examples \ref{examp}(1) and (2) show that the conclusion in Theorem \ref{thm_gps} cannot be strengthened  to `$G$ is [semi] algebraic-by-abelian', nor to `$G$ is abelian-by-[semi] algebraic'. Indeed, in Example \ref{examp}(1), taking  $H:=K^*$ acting naturally on $(V,+)$, we get that the commutator group $[G,G]=V\times \{1\}$ is infinite-dimensional, so $G$ is not [semi] algebraic-by-abelian. 
 On the other hand, the Heisenberg group $(G,\cdot)=(V\times V\times K,\cdot)$ in  Example \ref{examp}(2) is not abelian-by-[semi] algebraic. Indeed, working for example in $_ST^{ACF_p}_\infty$, if $N\triangleleft G$ is a normal definable subgroup such that $G/N$ is [semi] algebraic, then, by Corollary \ref{tfae} we get that $\dim(N)\sim \dim(G)=[2n+1]$. Hence, if $M=\bigcup^a_r N_r$ and $R\in \omega$ are such that $G$ and $H$ are definable over $N_R$, then  we can find $(v,w,a)\in H(M)$ with $v\notin N_R$ and $w\notin \Lin_{K(M)}(N_R,w)$. Let $v_0,v_1,w_0,w_1\in V(M)$ be such that $v=v_0+v_1$, $w=w_0+w_1$, $v_0,w_0\in V(N_R)$ and $v_1,w_1\perp V(N_R)$. As $[,]$ is nondegenerate and $v_1\notin V(N_R)$, there is $z\in V(M)$ with $[v_1,z]\neq 0$ and $z \perp V(N_R)$. Now we can choose $v',w''\in V(M)$
with $v',w''\perp v_1,w_1,z,V(N_R)$, $[v',v']=[v_1,v_1]$, and $[v',w'']=[v_1,w_1]$. Take any $e_0 \in V(M)$ with $e_0\perp v_1,w_1,z,v',w'',V(N_R)$ and $[e_0,e_0]=1$. Let $\alpha\in K(M)$ be such that $[\alpha e_0+w''+z,\alpha e_0+w''+z]=[w_1,w_1]$. Then putting $w':=\alpha e_0+w''+z$ we have  $[w',w']=[w_1,w_1]$, $[v',v']=[v_1,v_1]$, $[v',w']=\alpha [v',e_0]+[v',w'']+[v',z]=[v',w'']=[v_1,w_1]$, and $v_1,w_1,v',w'\perp V(N_R)$. Thus $\tp(v',w'/N_R)=\tp(v_1,w_1/N_R)$ and hence $\tp(v_0+v',w_1+w'/N_R)=\tp(v,w/N_R)$ so we can choose $b\in K$ with $\tp(v_0+v',w_1+w',b/N_R)=\tp(v,w,a/N_R)$. As $H$ is definable over $N_R$ and $(v,w,a)\in H$, we must have $(v_0+v',w_0+w',b)\in H$ as well. Now, the commutator $[(v,w,a),(v_0+v',w_0+w',b)]$ equals $(0,0,[v,w_0+w']-[v_0+v',w])=(0,0,[v_0,w_0]+[v_1,w']-([v_0,w_0]+[v',w_1]))=(0,0,[v_0,w_0]+[v_1,\alpha e_0]+[v_1,w'']+[v_1,z]-[v_0,w_0])=(0,0,[v_1,z])\neq (0,0,0)$. Hence $N$ is not abelian, and $G$ is not abelian-by-[semi] algebraic.
\end{remark}

Now we conclude from (the proof of) Theorem \ref{thm_gps} that all fields definable in $T^*_\infty$ have finite dimension.

\begin{theorem}
Every field definable in $T^*_\infty$ is finite-dimensional, and hence definable in the field of scalars $K$. In particular, there is no definable field structure on $V^k$ for any $k<\omega$.
\end{theorem}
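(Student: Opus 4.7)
I will apply Theorem \ref{thm_gps} to the additive group $(F,+)$ of a definable field $F$. Since $(F,+)$ is abelian, the theorem produces definable subgroups $N_0\triangleleft N\triangleleft(F,+)$ with $N_0$ and $(F,+)/N_0$ both [semi]algebraic (the middle abelian factor is immaterial here). By the definition of [semi]algebraic (definable bijection with a subset of some $K^m$) together with Remark \ref{rk_rm}, $\dim(N_0)\in\omega$. Combining with Corollary \ref{tfae} applied to the quotient $(F,+)/N_0$---which yields $\dim(F)-\dim(N_0)\in\omega$---I conclude $\dim(F)\in\omega$; this establishes the first assertion that $F$ is finite-dimensional.

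Next, Remark \ref{rk_rm} locates $F\subseteq V_0^k\times K^m$ for some finite-dimensional $K$-subspace $V_0\leq V$. Fixing a $K$-basis of $V_0$ yields a $\C$-definable bijection $V_0^k\times K^m\to K^{kd+m}$ with $d=\dim_K V_0$, transporting the field structure on $F$ to a field structure on a definable subset $\tilde F$ of $K^{kd+m}$. Since $\tilde F$ and its transported field operations are definable subsets of Cartesian powers of $K$, Fact \ref{fact_biint}(ii) yields that they are already definable in the pure field $(K,+,\cdot)$. Hence $F$ is definably isomorphic to a field definable in $K$.

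The ``in particular'' clause is then immediate: Proposition \ref{lin_dim} gives $\dim(V^k)=[kn]$, which lies outside $\omega$ for $k\geq 1$, so no field structure on $V^k$ can be definable in $T^*_\infty$, as the first assertion would then force $\dim(V^k)\in\omega$.

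I do not expect a genuinely hard step here. The whole argument is an almost immediate bookkeeping combination of Theorem \ref{thm_gps} with the dimension calculus and Fact \ref{fact_biint}(ii); the latter is the workhorse that lets one pass from finite-dimensional definable sets in $\C$ to sets definable in the pure field $K$, after which the field $F$ simply inherits its structure from $K$.
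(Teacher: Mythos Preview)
There is a genuine gap in your argument. Theorem \ref{thm_gps} produces definable subgroups $N_0\triangleleft N\triangleleft G$ with $N_0$ [semi]algebraic, $N/N_0$ abelian, and $G/N$ [semi]algebraic. It does \emph{not} assert that $G/N_0$ is [semi]algebraic, and this is the step where your argument breaks: you invoke Corollary \ref{tfae} for the quotient $(F,+)/N_0$, but nothing in the theorem guarantees that this quotient is [semi]algebraic. The phrase ``the middle abelian factor is immaterial'' is exactly where the error hides: since $(F,+)$ is already abelian, the condition ``$N/N_0$ is abelian'' is vacuous, and the decomposition can be taken to be the trivial one $N_0=\{0\}$, $N=(F,+)$. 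Then $N_0$ is algebraic and $(F,+)/N$ is trivial, so Theorem \ref{thm_gps} is satisfied, yet you learn nothing about $\dim(F)$. Indeed, the abelian infinite-dimensional group $(V,+)$ itself shows that the theorem applied to an abelian group cannot force finite dimension.

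The paper's proof avoids this by working with a \emph{non-abelian} group built from $F$, namely the affine group $G=F^*\ltimes(F,+)$. A direct computation of commutators shows that for $a\neq 1$ the centraliser $C_G((a,b))$ has dimension at most $\dim(F)$, while $\dim(G)=2\dim(F)$; hence the subgroup $N=\{g\in G:\dim(C_G(g))\sim\dim(G)\}$ from the proof of Theorem \ref{thm_gps} is contained in $\{1\}\times F$, so $\dim(N)\ll\dim(G)$. This contradicts the conclusion $\dim(N)\sim\dim(G)$ obtained in that proof. The non-abelian structure of the affine group is essential: it is precisely what makes the centraliser argument bite. Your reduction via $(F,+)$ cannot work for the reason above; one needs to exploit the interaction between the additive and multiplicative structures of $F$.
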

\begin{proof}
Suppose $F$ is an infinite-dimensional field definable in $T^*_\infty$. Put $s:=\dim(F)$. 
 
 Let $G= (F^*,\cdot)\ltimes (F,+)$ be the affine group of $F$, that is, $G$ consists of pairs $(a,b)$ where $a\in F^*$ and $b\in F$ with multiplication given by:  $$(a,b)(c,d)=(ac,b+ad).$$ 
  Notice that for any $(a,b)(c,d)\in G$  the commutator $[(a,b),(c,d)]=(a,b)(c,d)(a,b)^{-1}(c,d)^{-1}$ is equal to $(1,(a-1)d+(1-c)b)$. Hence, if $a\neq 1$ and  $(c,d)\in C_G((a,b))$ then $d=\frac{c-1}{a-1}b$, so $\dim(C_G((a,b)))\leq s$, whereas $\dim(G)=2s$ by Corollary \ref{big_col}(4) applied to projection on either of the coordinates of the Cartesian product $F^*\times F$. Hence, if we put $N:=\{g\in G:\dim(G)-\dim(C_G(g))\in \omega\}$, we get that $N\subseteq \{1\}\times F$. This implies that the set $\{(a,0):a\in F^*\}$ embeds definably in $G/N$, so $\dim(N)\ll\dim(G)$ which contradicts the proof of Theorem \ref{thm_gps}.
\end{proof}
By Fact \ref{fact_acf}(2) we conclude:
\begin{corollary}
 Every infinite field definable in $T_\infty$ is definably isomorphic to the field of scalars $K$.
\end{corollary}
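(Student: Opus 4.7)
The proof plan is essentially to chain together two facts already established in the paper. Let $F$ be an infinite field definable in $T_\infty$. First, I would invoke the theorem immediately preceding the corollary, which guarantees that $F$ is finite-dimensional, and consequently (via Remark \ref{rk_rm} and the elimination of imaginaries in $K=K(\C)$) definable in the pure field of scalars $K$. So $F$ is interpretable in the algebraically closed field $K$ as an infinite field.

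Next, I would apply Fact \ref{fact_acf}(2): every field definable in an algebraically closed field $K$ is definably isomorphic to $K$. Hence the field structure on $F$, viewed inside $K$, is definably isomorphic to $(K,+,\cdot)$ via some $K$-definable bijection. Since a definable-in-$K$ bijection is in particular definable in the ambient structure $\C \models T_\infty$ (the sort $K$ is a reduct of $\C$), this isomorphism is also $T_\infty$-definable, yielding the conclusion.

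The only minor bookkeeping step is to be careful about what ``definable in $K$'' means: one needs to observe that a definable bijection between $F$ (regarded as a subset of $K^n$ via the finite-dimensionality theorem) and $K$ witnessed in $(K,+,\cdot)$ lifts to a definable bijection in $T_\infty$. This is immediate because $(K,+,\cdot)$ is a stably embedded sort of $\C$ (the $K$-sort is pure by Fact \ref{fact_biint}(ii)), so any $K$-definable set or function is $\C$-definable. There is no real obstacle here; the work is entirely done by the two preceding results.
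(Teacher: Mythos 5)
Your proposal matches the paper's own (very brief) proof exactly: the corollary follows from the immediately preceding theorem that every field definable in $T_\infty$ is finite-dimensional (hence definable in $K$), combined with Fact \ref{fact_acf}(2). Your extra remarks about lifting the $K$-definable isomorphism to $\C$ via purity of the $K$-sort (Fact \ref{fact_biint}(ii)) are correct and just make explicit the bookkeeping that the paper leaves implicit.
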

By Fact \ref{fact_omin_fields} we also get:
\begin{corollary}
 Every infinite field definable in $T^{RCF}_\infty$  is either algebraically closed or real closed.
\end{corollary}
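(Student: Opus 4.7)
The plan is short because the heavy lifting has already been done in the preceding theorem, which shows that every field definable in $T^{RCF}_\infty$ is finite-dimensional and hence definable in the field of scalars $K$. So I would reduce the statement to an application of Fact \ref{fact_omin_fields} about o-minimal structures, applied to $K$.

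More concretely, let $F$ be an infinite field definable in $T^{RCF}_\infty$. By the preceding theorem, $F$ has finite dimension, and so, via Definition \ref{def_fk} and Fact \ref{fact_biint}(ii) (which lets us transfer definable sets on finite-dimensional pieces to the scalar sort), $F$ is in definable bijection with a subset of some $K^n$ on which the addition and multiplication of $F$ correspond to relations definable in the pure field $(K,+,\cdot)$. Hence $F$ is interpretable in $K$ as a field. Since in $T^{RCF}_\infty$ the field of scalars $K$ is a real closed field, it is an o-minimal structure, so Fact \ref{fact_omin_fields} applies to any infinite field interpretable in $K$; therefore $F$ is either real closed or algebraically closed.

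The only thing that requires any care is the passage from ``definable in the field of scalars'' to ``interpretable in an o-minimal structure,'' but this is immediate since $K\models \mathrm{RCF}$ is o-minimal and Fact \ref{fact_omin_fields} is stated for fields definable in any o-minimal expansion of a real closed field. No further obstacle arises; the corollary is essentially a one-line consequence of the previous theorem combined with the cited fact.
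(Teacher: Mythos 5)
Your proposal is correct and matches the paper's argument: the paper derives this corollary directly from the preceding theorem (which gives that any such field is finite-dimensional and hence definable in the field of scalars $K$) together with Fact \ref{fact_omin_fields}, exactly as you do. The point you flag about definability versus interpretability is handled by the theorem already asserting definability in $K$.
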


\section{Independence relations and generics}\label{sect_generics}
In this section we relate our notion of dimension in $T_\infty$ to two independence relations, $\ind^\Gamma$ introduced in \cite[Definition 12.2.1]{Gr} for $T_\infty$, and Kim-independence (denoted $\ind^K$) defined for any theory in \cite{KR}, and having good properties over models in NSOP$_1$ theories, and over arbitrary sets in NSOP$_1$ theories satisfying existence.

We will work in a monster model $\C\models T_\infty$, that is, a $\bar{\kappa}$-saturated, $\bar{\kappa}$-strongly homogeneous model of $T_\infty$ for some sufficiently large $\bar{\kappa}$. All parameter sets considered will be \emph{small}, that is, of size less than $\bar{\kappa}$.

We say that a set $A$ is an \emph{extension base} if no formula (or equivalently, type) over $A$ forks over $A$. We say that a theory $T$ satisfies the \emph{existence axiom} (or simply \emph{existence}) if every set of parameters is an extension base.
It was asked in \cite[Question 6.6]{DKR} whether any NSOP$_1$ theory satisfies existence, and a list of positive examples was given in \cite[Fact 2.14]{DKR}. Here we show that $T_\infty$ also satisfies it:
\begin{proposition}\label{prop_existence}
 $T_\infty$ satisfies existence. 
\end{proposition}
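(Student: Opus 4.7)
The goal is, for any $p \in S(A)$, to construct a global extension of $p$ no formula of which divides over $A$; this gives the extension base property.

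First, iterate Proposition~\ref{rk_extension} together with compactness to produce a realization $a$ of $p$ in a sufficiently saturated elementary extension of $\C$ such that $\dim(a/B) = \dim(p)$ for every small $B \subseteq \C$ containing $A$. Set $q := \tp(a/\C)$; this global type then satisfies $\dim(q|_B) = \dim(p)$ for all such $B$. The plan is to show that $q$ does not divide over $A$.

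Suppose toward a contradiction that some formula $\psi(x,c) \in q$ divides over $A$, witnessed by an $A$-indiscernible sequence $(c_i)_{i<\omega}$ in $\C$ with $c_0 = c$ and with $\{\psi(x,c_i) : i<\omega\}$ being $k$-inconsistent. By the $\emptyset$-definability of $\dim$ (Corollary~\ref{big_col}(1)), all the sets $\psi(x,c_i)$ have the same dimension, and $\dim(c_{i_0}\cdots c_{i_n}/A)$ depends only on $n$. Two applications of Lascar's equality (Proposition~\ref{lascar}) to the tuple $(a,c_0,\ldots,c_n)$ yield
\[
\dim(a/Ac_0\cdots c_n) + \dim(c_0\cdots c_n/A) = \dim(c_0\cdots c_n/Aa) + \dim(a/A);
\]
since by the choice of $a$ the first summand on the left equals $\dim(p) = \dim(a/A)$, we deduce $\dim(c_0\cdots c_n/Aa) = \dim(c_0\cdots c_n/A)$ for every $n$, i.e.\ the dimension profile of $(c_i)$ over $A$ is unchanged when $a$ is added to the parameters.

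The next step is to upgrade $A$-indiscernibility of $(c_i)$ to $Aa$-indiscernibility of a suitable $A$-conjugate sequence $(c_i')$. Extract $(c_i')$ via the standard Ramsey-plus-compactness argument so that it has the same $A$-EM-type as $(c_i)$; hence $\{\psi(x,c_i')\}$ is still $k$-inconsistent and $c_0'\equiv_A c_0$. Combining the dimension equality above with the finiteness of multiplicity in $T_\infty$ (Theorem~\ref{thm_mlt}) and the resulting finiteness of the number of maximum-dimension completions of $p$ to $Ac_0$ (Proposition~\ref{cor_mlt}(1)), we see that $\tp(c_0'/Aa)$ is determined up to finitely many options. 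Passing to a further subsequence, we may assume $a \models \psi(x,c_0')$; then $Aa$-indiscernibility propagates this to $a \models \psi(x,c_i')$ for all $i$, contradicting $k$-inconsistency.

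The main obstacle is the coordination in this final step: after Ramsey extraction one must ensure that the instance $\psi(x,c_0')$ is still realized by $a$, which is why the finiteness-of-multiplicity statement from Theorem~\ref{thm_mlt} is essential. Carrying this out cleanly may require first reducing to the case where $p$ contains an $A$-definable formula $\rho$ of multiplicity $1$ and dimension $\dim(p)$ (perhaps after replacing $A$ by $\acl(A)$, using Fact~\ref{dcl}(2) to control $\acl(A)$), so that the maximum-dimension global extension $q$ is uniquely determined by $\rho$ and hence $A$-invariant, which immediately precludes the existence of any dividing formula in $q$.
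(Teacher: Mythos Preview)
Your main line of argument (Lascar's equality followed by Ramsey extraction of an $Aa$-indiscernible sequence) has a genuine gap, which you yourself identify: after extraction there is no reason for $a\models\psi(x,c_0')$ to persist. The sentence ``$\tp(c_0'/Aa)$ is determined up to finitely many options'' does not follow from what you have; finiteness of multiplicity bounds the number of generic completions of $\tp(c_0/A)$ over some model, not the number of completions over the arbitrary set $Aa$, and even if it did you would still need to \emph{select} the correct completion after extraction, which Ramsey does not do for you. So this route, as written, does not close.

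Your final paragraph is much closer to the mark, and is in fact the idea the paper uses, but it also contains a gap: you assume one can find, over $\acl(A)$, a formula $\rho$ of multiplicity~$1$ and dimension $\dim(p)$ lying in $p$. Nothing in the paper guarantees that the multiplicity-$1$ pieces of a formula are definable over $\acl(A)$ rather than over a model (and $T_\infty$ does not eliminate imaginaries, so $\acl(A)$ is genuinely weaker than $\acl^{eq}(A)$). Fact~\ref{dcl}(2) only controls $\acl$ on the home sorts; it does not produce the needed canonical parameters.

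The paper avoids both problems by working directly with the global generic type rather than with a multiplicity-$1$ formula. Given $\phi(x,a)$ over $A$, take a global generic $q$ in $\phi(x,a)$. Every $A$-conjugate of $q$ is again a global generic in $\phi(x,a)$, and by Proposition~\ref{cor_mlt}(1) there are only finitely many of those; hence $q$ has finitely many $A$-conjugates. Since $q$ is moreover a \emph{definable} type (Proposition~\ref{cor_mlt}(2)), its defining scheme has finitely many $A$-conjugates and is therefore over $\acl^{eq}(A)$. A type definable over $\acl^{eq}(A)$ does not fork over $\acl^{eq}(A)$, hence not over $A$. This is a two-line argument once Proposition~\ref{cor_mlt} is in hand, and it bypasses both the extraction problem and the need for an $\acl(A)$-definable multiplicity-$1$ set.
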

\begin{proof}
Let $\phi(x,a)$ be a formula over $A$. Let $p(x)$ be a global generic type in $\phi(x,a)$. As any conjugate of $p(x)$ over $A$ is also a generic type in $\phi(x,a)$, we get by Corollary \ref{cor_mlt}(1) that there are only finitely many conjugates of $p(x)$ over $A$. As $p(x)$ is definable by Corollary \ref{cor_mlt}(2), this implies that it is definable over $\acl^{eq}(A)$; in particular, $p(x)$ does not fork over $\acl^{eq}(A)$, so it does not fork over $A$, so $\phi(x,a)$ does not fork over $A$.
\end{proof}

\begin{fact}\cite[Theorem 12.2.2]{Gr}\label{fact_gam}
Let $M\models T_\infty$. Then the relation $\ind^{\Gamma}$ on subsets of $M$ given by $\Gamma$-forking is automorphism invariant, symmetric, transitive, satisfies the finite character and extension axioms, and types over models are stationary. 
\end{fact}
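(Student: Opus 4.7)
Since this is stated as a fact attributed to Granger, my plan is essentially to reconstruct the argument from the explicit description of $\ind^\Gamma$ in \cite[Definition 12.2.1]{Gr}. Although the definition is not reproduced in the excerpt, the natural candidate in this two-sorted setting (compatible with all other references to $\ind^\Gamma$ in the paper) is that $A\ind^\Gamma_C B$ should encode three pieces of data:
(a) algebraic independence in $\mathrm{ACF}_p$ of the scalar parts $K(A)$ and $K(B)$ over $K(C)$;
(b) linear disjointness $\Lin_K(V(AC))\cap \Lin_K(V(BC))=\Lin_K(V(C))$;
(c) orthogonality of $V(A)\setminus \Lin_K(V(C))$ and $V(B)\setminus \Lin_K(V(C))$ with respect to $[,]$, after subtracting $\Lin_K(V(C))$-components. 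I would then verify each clause of the fact against this concrete description.

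The easy clauses are handled directly. Automorphism invariance of each of (a)--(c) is immediate since they are all expressible using quantifier-free data in $L_\theta^F$, and $T_\infty$ eliminates quantifiers in this language. Symmetry is manifest because algebraic independence, linear disjointness, and orthogonality are each symmetric. Finite character holds because a failure of (a) is witnessed by a single polynomial, a failure of (b) by a single linear dependence, and a failure of (c) by a single pair with nonzero inner product; hence violating any one of them is witnessed on finite subtuples.

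For transitivity $\big(A\ind^\Gamma_C B$ and $A\ind^\Gamma_{BC} D \Rightarrow A\ind^\Gamma_C BD\big)$, I would handle each component separately: (a) is the usual transitivity of ACF-independence; (b) follows from the standard tower lemma for linear disjointness, combining $\Lin_K(V(AC))\cap \Lin_K(V(BC))=\Lin_K(V(C))$ with $\Lin_K(V(ABC))\cap \Lin_K(V(BCD))=\Lin_K(V(BC))$; and (c) is obtained by decomposing any $v\in V(A)$ as its $\Lin_K(V(C))$-projection plus an orthogonal part, using the $C$-orthogonality to $V(B)$ and the $BC$-orthogonality to $V(D)$ to deduce $C$-orthogonality to $V(BD)$.

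For extension I would construct the witness directly using saturation of $\C$: given $C\subseteq B$ and $A$, first pick $A'\equiv_C A$ whose scalar part is ACF-generic over $K(B)$ over $K(C)$, then replace its vector part by an image under an appropriate linear-plus-orthonormal extension produced via Gram--Schmidt (Fact \ref{gross}), so that $V(A')$ realizes the same $L_\theta$-type over $V(C)$ but is linearly disjoint from $V(B)$ over $V(C)$ and orthogonal to the $V(C)$-complement of $V(B)$. Stationarity over a model $M$ would be the main obstacle, and the key tool is Fact \ref{dcl}(1): the type of a vector $v\notin \Lin_K(V(M))$ over any superset $A\supseteq M$ is determined by the values $[v,a]$ for $a\in V(A)$, together with $[v,v]$. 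If $v\ind^\Gamma_M B$ then, decomposing each $b\in V(B)$ along $V(M)\oplus V(M)^\perp$, the values $[v,b]$ are forced: the $V(M)$-component contributes a value determined by $\tp(v/M)$, while the orthogonal component contributes $0$ by clause (c). Clause (a) pins down the scalar part up to $\mathrm{ACF}_p$-type over $K(MB)$, which by stationarity in $\mathrm{ACF}_p$ is unique. An induction on the length of a tuple $\bar v$ extends this one-vector argument, yielding uniqueness of the $\ind^\Gamma$-free extension and hence stationarity.
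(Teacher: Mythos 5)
This statement is a \emph{cited fact}: the paper reproduces it from Granger's thesis \cite[Theorem 12.2.2]{Gr} and gives no proof of its own. So there is no in-paper argument to compare against, and I will evaluate your reconstruction on its own terms.

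The central weakness of your plan is that it begins from an \emph{unverified candidate characterization} of $\ind^\Gamma$. Granger's $\ind^\Gamma$ is \emph{defined} as $\Gamma$-forking, i.e.\ via forking in the finite-dimensional approximating structures $N_r$ (this is exactly what Fact~\ref{fact_gamma}, quoting \cite[Proposition 12.2.3]{Gr}, is about), not as the conjunction (a)--(c). Your proposed triple --- ACF-independence of scalars, linear disjointness, and orthogonality of the $V(C)^\perp$-parts --- is indeed the shape of an explicit description of $\ind^\Gamma$, and it is consistent with the paper's remarks (e.g.\ it is visibly stronger than the description of $\ind^K$ in Proposition~\ref{kim_description}, as it must be). But you would first have to \emph{prove} that this description coincides with $\Gamma$-forking before any of your verifications say anything about $\ind^\Gamma$; that equivalence is not trivial and is itself a substantial part of Granger's chapter. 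Granger's own route is different: having defined $\ind^\Gamma$ by forking in the $N_r$, the axioms are largely inherited from the corresponding properties of forking in the (superstable) finite-dimensional reducts, by transfer along the approximating sequence. Your route, if completed, is more hands-on and arguably more illuminating, but it is not a proof of the stated fact until the characterization itself is established.

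Two further points need attention even granting the characterization. First, the stationarity argument relies on decomposing each $b\in V(B)$ as $b_0+b_1$ with $b_0\in V(M)$ and $b_1\perp V(M)$. For infinite-dimensional $V(M)$ this decomposition does not exist in general (the functional $m\mapsto[b,m]$ on $V(M)$ need not be represented by an element of $V(M)$); you must localize to a finite-dimensional nondegenerate subspace of $V(M)$ that ``sees'' the finite tuples under consideration, exactly in the spirit of the approximating sequences. Second, the transitivity of the orthogonality clause~(c) is asserted without detail, and it is not purely formal: the relevant projections onto $V(C)^\perp$ and $V(BC)^\perp$ are different, and you need the already-established linear disjointness in (b) to relate them. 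Both gaps are repairable, but as written they are genuine holes in the argument.
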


Below, if $p(x)\in S(B)$ is a complete type in $T_\infty$ and $B\subseteq N_R\models T_R$, then we say that $p(x)$ forks in $N_R$ over some $A\subseteq B$ if its quantifier-free part in the language $L^F_\theta$ (which is equivalent to $p(x)$ in $T_\infty$) forks in $N_R$ over $A$.  
Likewise, $\RM_{N_R}(p(x))$ means Morley rank of the quantifier-free part of $p(x)$ in the sense of $N_R$.
\begin{fact}\cite[Proposition 12.2.3]{Gr}\label{fact_gamma}
 Let $M\models T_\infty$, let $A\subseteq B\subseteq M$ and let $p(x)\in S(B)$. Let $(N_r:r\in\omega)$ be some approximating sequence for $M$. Then the following are equivalent:\\
 (1) $p(x)$ does not $\Gamma$-fork over $A$;\\
 (2) Given any formula $\phi=\phi(x,b)\in p(x)$ there is  $R_\phi\in \omega$ such that $\phi(x,b)$ does not fork over $A\cap N_r$ in the structure $N_r$ for all $r\geq R_\phi$;\\
 (3) For each finite $b\subseteq B$ there is $R_b\in \omega$ such that $p(x)|_{N_r\cap Ab}$ does not fork over $A\cap N_r$ in $N_r$ for all $r\geq R_b$. 
\end{fact}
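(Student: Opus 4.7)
The equivalence splits naturally into $(1)\iff(2)$, which essentially unwinds the definition of $\Gamma$-forking, and $(2)\iff(3)$, where the real content lies in $(2)\Rightarrow(3)$.

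For $(1)\iff(2)$: by the definition from \cite[Definition 12.2.1]{Gr}, a formula $\phi(x,b)$ over $B$ $\Gamma$-forks over $A$ iff for some (equivalently, any, by Fact \ref{gross} and an isomorphism-extension argument) approximating sequence $(N_r)$ of $M$ with $b\subseteq N_R$ for some $R$, $\phi(x,b)$ forks in $N_r$ over $A\cap N_r$ for cofinitely many $r\geq R$. A type $\Gamma$-forks iff some formula in it does, so negating gives $(1)\iff(2)$. The direction $(3)\Rightarrow(2)$ is also direct: given $\phi(x,b)\in p(x)$, set $R_\phi:=R_b$ (enlarged so $b\subseteq N_r$ for $r\geq R_\phi$); then $b\in N_r\cap Ab$, hence $\phi(x,b)\in p(x)|_{N_r\cap Ab}$, and non-forking of the type entails non-forking of the formula.

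The substantive step is $(2)\Rightarrow(3)$. By Fact \ref{fact_biint}, each $N_r$ is bi-interpretable with the strongly minimal field $(K,+,\cdot)$, so non-forking in $N_r$ is controlled by Morley rank: a type $q$ over $C\subseteq N_r$ does not fork over $A\subseteq C$ iff some formula $\psi\in q$ with parameters in $C$ satisfies $\RM_{N_r}(\psi)=\RM_{N_r}(q|_A)$. The plan is to produce a single formula $\psi(x,d)\in p(x)$ with $d$ a finite subtuple of $Ab$ that witnesses this Morley-rank equality for $q=p(x)|_{N_r\cap Ab}$ relative to $A\cap N_r$ in $N_r$, for all sufficiently large $r$. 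Once such $\psi$ is produced, applying $(2)$ to it yields an $R_\psi$, and $R_b:=R_\psi$ (enlarged so that $d\subseteq N_r$) works uniformly for the whole restricted type.

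To extract such a $\psi$, I would combine: (a) definability and finiteness of attainable Morley ranks (Fact \ref{fact_rm}(5),(6)), giving that $\RM_{N_r}(p(x)|_{N_r\cap Ab})$ takes only finitely many values as $r$ varies and hence stabilises along a cofinal subsequence; and (b) quantifier elimination in $L_\theta^F\cup L_K$ combined with the type description in Fact \ref{fact_dim0}, which shows that $p(x)|_{N_r\cap Ab}$ is determined by the bilinear-form values $[x,c]$ for $c\in V(N_r\cap Ab)$, linear-independence data, and $K$-sort data over $N_r\cap Ab$. The main obstacle is showing that this stabilised ``generic'' Morley rank is already realised by a formula with parameters in $V(Ab)$ alone --- that is, that extra basis vectors of $V(N_r)$ orthogonal to $\Lin_K(V(Ab))$ contribute to the rank of $p(x)|_{N_r\cap Ab}$ in a uniform, parameter-independent fashion (cf.\ Corollary \ref{cor_1}, where sphere-type formulas add a predictable amount of rank). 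This is where the specific bilinear-form structure of $T_\infty$, rather than abstract strong minimality, is used.
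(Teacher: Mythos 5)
First, a structural point: the paper does not prove this statement. It is stated as a \emph{Fact} and attributed to \cite[Proposition 12.2.3]{Gr}; no proof is given in the paper, so there is no internal argument to compare yours against. I will assess your plan on its own terms.

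Your handling of $(1)\iff(2)$ and $(3)\Rightarrow(2)$ is plausible. For $(3)\Rightarrow(2)$, the observation you use (once $b\subseteq N_r$, the formula $\phi(x,b)$ belongs to $p(x)|_{N_r\cap Ab}$, and a formula contained in a non-forking type does not fork) is correct. For $(1)\iff(2)$ one would need Granger's exact Definition 12.2.1, which the paper does not reproduce, but treating it as an unwinding of the definition is reasonable.

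There is, however, a concrete error in step (a) of your plan for $(2)\Rightarrow(3)$. You claim that $\RM_{N_r}\bigl(p(x)|_{N_r\cap Ab}\bigr)$ takes only finitely many values as $r$ varies and therefore stabilises along a cofinal subsequence, citing Fact \ref{fact_rm}(5),(6). Those facts bound ranks of instances of a fixed formula \emph{within a single strongly minimal structure}. Here the ambient structure $N_r$ itself grows with $r$, and for any definable set not contained in a finite-dimensional subspace the rank $\RM_{N_r}$ tends to infinity; this is precisely the content of Proposition \ref{correction} and Fact/Definition \ref{fact_dim}, where what is eventually constant is the \emph{codimension} $r-\RM_{N_r}(X\cap N_r)$, not the rank. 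Concretely, if $p$ is the generic type of $V$ over $\emptyset$, then $\RM_{N_r}(p|_{\emptyset})$ computed in $N_r$ equals $r$, which is unbounded. So no cofinal stabilisation of raw ranks occurs. To make your strategy viable you would need to work with a relative quantity such as $\RM_{N_r}(p|_{A\cap N_r})-\RM_{N_r}(p|_{N_r\cap Ab})$ (which is what forking over $A\cap N_r$ actually measures) and show \emph{that} is eventually zero; the paper's dimension/codimension machinery is designed exactly to make such differences eventually constant.

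Even with the stabilisation claim repaired, the final reduction is underdeveloped: the existence of a single formula $\psi(x,d)\in p$ with $d\subseteq Ab$ that does not fork over $A\cap N_r$ in $N_r$ (which is what hypothesis $(2)$ gives you) only guarantees that \emph{some} completion of $\psi$ over $N_r\cap Ab$ is a non-forking extension of its restriction to $A\cap N_r$; it does not by itself show that the particular restricted type $p|_{N_r\cap Ab}$ is such a non-forking extension. To close this you need, for each large $r$, to compare $\RM_{N_r}(p|_{N_r\cap Ab})$ with $\RM_{N_r}(p|_{A\cap N_r})$ directly, using that $\psi$ is chosen to be rank-minimal in $p$ in a sense that is uniform in $r$. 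You flag the right obstacle at the end of your plan, but the tools you invoke to overcome it do not apply as stated.
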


\begin{corollary}\label{cor_fork}
Let $M\models T_\infty$, let $A\subseteq B\subseteq M$ with $A$ finite, and let $p(x)\in S_{<\omega}(B)$.
Then $p(x)$ does not $\Gamma$-fork over $A$ if and only if $\dim(p(x))=\dim(p|_A(x))$.
\end{corollary}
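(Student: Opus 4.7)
The plan is to use Fact~\ref{fact_gamma} to translate $\Gamma$-non-forking into non-forking in each $N_r$ for large $r$, then exploit the fact that $N_r$ has finite Morley rank (since by Fact~\ref{fact_biint} it is bi-interpretable with the underlying ACF) to characterise non-forking by preservation of Morley rank, and finally relate $\RM_{N_r}$ to $\dim$ via niceness.

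First I would fix an approximating sequence $M=\bigcup^a_r N_r$ and choose $R_0$ with $A\subseteq N_{R_0}$, so that $A\cap N_r=A$ for $r\geq R_0$. The main preliminary step is the following uniform translation: for any complete type $q$ over a finite tuple $C$ with $\dim(q)=[c_0+c_1n]$, one has $\RM_{N_r}(q)=c_0+c_1r$ for all $r$ sufficiently large. Since $\dim(q)$ lies in the finite set $D_{l(x),l(C)}$, the infimum defining $\dim(q)$ is realised by some formula $\phi_q\in q$; by niceness (Theorem~\ref{thm_dim}) $\RM_{N_r}(\phi_q)=c_0+c_1r$ for large $r$, while by Lemma~\ref{lemma_geq} no other $\psi\in q$ can have smaller $\RM_{N_r}$ eventually. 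Applied to $p|_A$ and to $p|_{Ab}$ for finite $b\subseteq B$, this gives $\RM_{N_r}(p|_A)=d_0+d_1r$ and $\RM_{N_r}(p|_{Ab})=d_0^b+d_1^b r$ for $r$ large, where $[d_0+d_1n]=\dim(p|_A)$ and $[d_0^b+d_1^bn]=\dim(p|_{Ab})$.

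For the forward direction, assuming $p$ does not $\Gamma$-fork over $A$, Fact~\ref{fact_gamma}(3) yields that $p|_{Ab}$ does not fork over $A$ in $N_r$ for each finite $b\subseteq B$ and large $r$. In a theory of finite Morley rank this is equivalent to $\RM_{N_r}(p|_{Ab})=\RM_{N_r}(p|_A)$, which by the translation forces $\dim(p|_{Ab})=\dim(p|_A)$; since $\dim(p)=\inf_b\dim(p|_{Ab})$ over finite $b\subseteq B$, we obtain $\dim(p)=\dim(p|_A)$. Conversely, if $\dim(p)=\dim(p|_A)$, then sandwiching yields $\dim(p|_{Ab})=\dim(p|_A)$ for every finite $b\subseteq B$; for any $\phi=\phi(x,b)\in p$, the complete type $p|_{Ab}\ni\phi$ satisfies $\RM_{N_r}(p|_{Ab})=\RM_{N_r}(p|_A)$ for large $r$, hence does not fork over $A$ in $N_r$, so $\phi$ does not either, and Fact~\ref{fact_gamma}(2) gives $\Gamma$-non-forking.

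The only delicate point is the uniform translation between $\RM_{N_r}(q)$ and $\dim(q)$ for complete types over finite sets: the same formula $\phi_q\in q$ must both realise the infimum defining $\dim(q)$ and eventually minimise $\RM_{N_r}$ among all formulas in $q$. This will follow from $\dim$ taking values in the finite set $D_{l(x),l(C)}$ together with Lemma~\ref{lemma_geq}, but has to be set up carefully so that ``large $r$'' can be chosen uniformly in the finitely many $\RM_{N_r}$-comparisons required.
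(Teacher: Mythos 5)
Your proposal is correct and follows the same route as the paper: translate $\Gamma$-(non)forking into (non)forking in the approximating models $N_r$ via Fact~\ref{fact_gamma}, use the finite--Morley-rank characterisation of nonforking (via bi-interpretability with ACF) to convert to a comparison of $\RM_{N_r}$, and then pass from $\RM_{N_r}$ to $\dim$ using niceness and Lemma~\ref{lemma_geq}. The ``delicate point'' you flag about uniformity of ``large $r$'' is indeed exactly what the paper handles by working with $r\geq\alpha(l(x),l(Ab))$ with $Ab\subseteq N_r$, so that niceness gives the exact value $\RM_{N_r}(\psi)=d_0^\psi+d_1^\psi r$ for all formulas $\psi$ over $Ab$ simultaneously, and Lemma~\ref{lemma_geq} makes the finitely many comparisons in $D_{l(x),l(Ab)}$ hold at the same threshold.
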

\begin{proof}
Assume $\dim(p(x))=\dim(p|_A(x))$. We will verify that the condition (3) in Fact \ref{fact_gamma} holds for $p(x)$ and $A$.  Consider any finite $b\subseteq B$ and  $M=\bigcup^a_rN_r$ containing $Ab$. Let $R\geq \alpha(l(x),l(Ab))$ be such that $Ab\subseteq N_{R}$. Consider any $r>R$. Note that $\dim(\phi(x))\in D_{l(x),l(Ab)}$ for any formula $\phi(x)$ with parameters in $Ab$. Hence, as $\dim(p(x))=\dim(p|_A(x))$, we have by Lemma \ref{lemma_geq} that 
 $$\RM_{N_r}(p|_{Ab}(x))=\RM_{N_r}(p|_A(x)),$$ so $p|_{Ab}(x)$ does not fork over $A$ in $N_r$, and  by Fact \ref{fact_gamma} we get that $p|_{Ab}(x)$ does not $\Gamma$-fork over $A$, as required.
 
 Similarly, if $\dim(p(x))<\dim(p|_A(x))$ then there is a finite $b$ such that 
 $\dim(p|_{Ab}(x))<\dim(p|_A(x))$, and if $M=\bigcup^a_r N_r$ and $R\geq \alpha(l(x),l(Ab))$ are such that $Ab\subseteq N_R$, then we have by Lemma \ref{lemma_geq} that $\RM_{N_r}(p|_{Ab}(x))<\RM_{N_r}(p|_A(x))$, so,
 by Fact \ref{fact_gamma}, $p|_{Ab}(x)$ $\Gamma$-forks over $A$.
\end{proof}

\begin{definition}\cite[Definition 1.11]{EKP}\label{def_gen} Let
$A\subseteq B\subseteq \C\models T$ for some theory $T$, and let $G$ be a group definable in $\C$ over parameters $A$. Suppose $\ind^*$ is an invariant ternary relation between small subsets of $\C$. 
We call an element $g\in G$ a (left) \emph{generic} over $B$, if 
$$\mbox{for every }h\in G\mbox{ such that }g\ind^*_B h \mbox{ we have }h\cdot g\ind_A^* B,h.$$ We call a type $p(x)\in S_G(B)$ (left) generic in $G$ if every (equivalently, some) its realisation is a generic in $G$ over $B$. 
\end{definition}
This notion of a generic was first studied in groups definable in stable (e.g. \cite{Poi}), and more generally, simple theories (e.g. \cite{Pil}), with $\ind^*$ being the forking independence. In \cite{EKP} it was studied in rosy theories mainly with $\ind^*$ being thorn-independence. In the  (non-first order) setting of Polish group structures a useful notion of a generic is obtained by taking $\ind^*$ to be the non-meagre independence (\cite{Kr}).
Below we examine this notion in $T_\infty$  for $\ind^*=\ind^\Gamma$ and for $\ind^*=\ind^K$.

To avoid confusion,  we will say `$\dim$-generic' to mean  generic in the sense
of Definition \ref{def_dimgen}.

\begin{proposition}
Suppose $A\subseteq B\subseteq \C\models T_\infty$, where $A$ is finite, and $G$ is a group definable  over $A$.  Then for any $p(x)\in S(B)$ we have that $p(x)$ is $\ind^\Gamma$-generic in $G$ if and only if $p(x)$ is $\dim$-generic in $G$. 
\end{proposition}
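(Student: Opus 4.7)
The plan is to use Corollary~\ref{cor_fork} --- which characterises $\Gamma$-non-forking over the finite base $A$ by preservation of $\dim$ --- together with symmetry and transitivity of $\ind^\Gamma$ (Fact~\ref{fact_gam}), base-monotonicity of non-forking, and Lascar's equality (Proposition~\ref{lascar}). Fix a realisation $g$ of $p$; note $\dim(g/B)\le \dim(G)$ since $g\in G$.

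For the direction $\dim$-generic $\Rightarrow$ $\ind^\Gamma$-generic: if $\dim(g/B)=\dim(G)$, then since $A\subseteq B$ we also have $\dim(g/A)=\dim(G)=\dim(g/B)$, so Corollary~\ref{cor_fork} gives $g\ind^\Gamma_A B$. For any $h\in G$ with $g\ind^\Gamma_B h$, transitivity yields $g\ind^\Gamma_A Bh$, and since $g$ and $hg$ are inter-definable over $h\in Bh$ this is equivalent to $hg\ind^\Gamma_A Bh$.

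For the converse, assume $p$ is $\ind^\Gamma$-generic. Using $\aleph_0$-saturation via Proposition~\ref{rk_extension}, pick $h\in G$ with $\dim(h/Bg)=\dim(G)$; then $\dim(h/A)=\dim(h/Bg)=\dim(G)$, so Corollary~\ref{cor_fork} gives $h\ind^\Gamma_A Bg$, whence $h\ind^\Gamma_B g$ by base-monotonicity of non-forking (as $A\subseteq B$) and $g\ind^\Gamma_B h$ by symmetry. Invoking the $\ind^\Gamma$-genericity of $g$ now produces $hg\ind^\Gamma_A Bh$; Corollary~\ref{cor_fork} gives $\dim(hg/Bh)=\dim(hg/A)$, and inter-definability over $h$ rewrites $\dim(hg/Bh)$ as $\dim(g/Bh)$.

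It remains to identify $\dim(hg/A)$ with $\dim(G)$ and $\dim(g/Bh)$ with $\dim(g/B)$. The first is immediate: $\dim(hg/A)\ge \dim(hg/Bg)=\dim(h/Bg)=\dim(G)$. The second is established finite-basewise: for each finite $B_0\subseteq B$ one has $\dim(h/B_0)=\dim(h/B_0 g)=\dim(G)$ (since $B_0\subseteq Bg$), so Lascar's equality applied to $\dim(g,h/B_0)$ in two ways gives $\dim(g/B_0 h)+\dim(h/B_0)=\dim(h/B_0 g)+\dim(g/B_0)$, i.e.\ $\dim(g/B_0 h)=\dim(g/B_0)$; taking the infimum over such $B_0$ yields $\dim(g/Bh)=\dim(g/B)$. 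Assembling, $\dim(g/B)=\dim(g/Bh)=\dim(hg/A)=\dim(G)$, so $p$ is $\dim$-generic. The main obstacle is that $B$ need not be finite, which prevents direct application of Corollary~\ref{cor_fork} or Lascar's equality with base $B$; this is bypassed precisely by base-monotonicity of non-forking and the finitary Lascar reduction above.
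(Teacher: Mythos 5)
Your proposal is correct and uses the same key lemma as the paper (Corollary~\ref{cor_fork}, together with Proposition~\ref{rk_extension} and invariance of $\dim$ under definable bijections), but it handles the technical point of an infinite base $B$ more carefully. The paper's own proof applies Corollary~\ref{cor_fork} with the possibly infinite base $B$ (e.g.\ to pass from $g\ind^\Gamma_B h$ to $\dim(g/Bh)=\dim(g/B)$, and from $hg\ind^\Gamma_B h$ to $\dim(hg/Bh)=\dim(hg/B)$), even though that corollary is stated and proved only for a finite base; your proof deliberately routes everything through the finite base $A$, using transitivity in one direction, and base monotonicity, symmetry, and a finite-basewise Lascar computation in the other. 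That is a genuine improvement in rigour. Two small things you should make explicit: (i) base monotonicity of $\ind^\Gamma$ is not in the list of properties of Fact~\ref{fact_gam} --- it does follow from the usual strong form of transitivity (or from Fact~\ref{fact_gamma} together with base monotonicity of forking in the approximating $N_r$), but you are leaning on it, so cite or derive it; (ii) your step ``$g$ and $hg$ are inter-definable over $h$, so $g\ind^\Gamma_A Bh$ is equivalent to $hg\ind^\Gamma_A Bh$'' is correct but deserves a one-line justification, most cleanly by translating through Corollary~\ref{cor_fork} and invariance of $\dim$ under $Bh$-definable bijections, together with the bound $\dim(hg/A)\le\dim(G)$ (which is exactly how the paper dispatches the analogous step). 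In the finite-basewise Lascar reduction you should also quantify over finite $B_0$ \emph{containing $A$}, so that $G$ is $B_0$-definable and the bound $\dim(h/B_0)\le\dim(G)$ is available; this is harmless since the infimum over such $B_0$ is the same.
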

\begin{proof}
 Suppose first $p(x)$ is $\dim$-generic in $G$ (i.e. $\dim(p(x))=\dim(G)$) and fix any $g\models p$ and $h\in G$ such that $g\ind^{\Gamma}_B h$. Then by Corollary \ref{cor_fork} $\dim(\tp(g/B,h))=\dim(\tp(g/B))=\dim(p(x))=\dim(G)$. As $\dim$ is preserved by definable bijections and every formula in $q:=\tp(h\cdot g/B,h)$ is a translate of a formula in $\tp(g/B,h)$, we conclude that $\dim(q(x))=\dim(G)$. On the other hand, $q|_A\vdash G(x)$, so $\dim(q|_A(x))\leq \dim(G)$, so we must have $\dim(q(x))=\dim(q|_A(x))$. By Corollary \ref{cor_fork} again, this gives that $q(x)$ does not $\Gamma$-fork over $A$, i.e. $h\cdot g\ind_A^\Gamma B,h$.
 
 Now suppose $p=\tp(g/B)$ is a $\ind^\Gamma$-generic in $G$. By Proposition \ref{rk_extension}  we can find $h\in G$ with $\dim(h/B,g)=\dim(G)$. In particular, $g\ind^{\Gamma}_B h$. 
 As $g$ is generic in $G$ over $B$, we have $h\cdot g\ind_A^\Gamma B,h$ so $h\cdot g\ind_B^\Gamma h$. Using this together with Corollary \ref{cor_fork} in the second equality below, we get:
  $$\dim(g/B)\geq \dim(g/B,h)=\dim(h\cdot g/B,h)=$$ $$=\dim(h\cdot g/B)\geq \dim(h\cdot g/B,g)=\dim(h/B,g)=\dim(h/B)=\dim(G).$$
 Clearly $\dim(g/B)\geq \dim(G)$ implies that $\dim(g/B)= \dim(G)$, as $\tp(g/B)\vdash G(x)$.
 \end{proof}

\begin{corollary}
For any group $G$ definable in $T_\infty$ over a finite set $A$ and any $B\supseteq A$ there exists a $\ind^\Gamma$-generic over $B$ element in $G$, and being $\ind^\Gamma$-generic does not depend on the choice of the finite set $A$ over which $G$ is definable.
\end{corollary}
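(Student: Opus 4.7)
The plan is to reduce both statements directly to the preceding proposition, which identifies $\ind^\Gamma$-generic types in $G$ over $B$ with $\dim$-generic types (those $p(x) \in S(B)$ with $\dim(p(x)) = \dim(G)$). Once this identification is in place, existence of $\ind^\Gamma$-generics becomes existence of $\dim$-generics, and independence of the choice of $A$ becomes manifest because the $\dim$-generic condition is phrased purely in terms of $\dim(\tp(g/B))$ and $\dim(G)$, with no reference to $A$.

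For existence, I would apply Proposition \ref{rk_extension} to the partial type $\pi(x) := \{G(x)\}$ over $B$. First I would check that $\dim(\pi(x)) = \dim(G)$: any formula $\phi(x)$ implied by $\pi(x)$ contains $G$ and hence satisfies $\dim(\phi(x)) \geq \dim(G)$ by Corollary \ref{big_col}(2), while the formula $G(x)$ itself lies in $\pi(x)$ and has dimension $\dim(G)$. Proposition \ref{rk_extension} then produces a complete type $p(x) \in S(B)$ with $\dim(p(x)) = \dim(G)$. By $\bar\kappa$-saturation of $\mathfrak{C}$ (and $|B| < \bar\kappa$), $p(x)$ is realised by some $g \in \mathfrak{C}$, with $g \in G$ since $G(x) \in p(x)$. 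By the preceding proposition, $g$ is $\ind^\Gamma$-generic in $G$ over $B$.

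For independence from $A$, suppose $G$ is definable over finite sets $A_1$ and $A_2$ and let $B \supseteq A_1 \cup A_2$ (enlarging $B$ if necessary, noting that the property we want to verify is only strengthened by enlarging the base, since types get more constrained). For $g \in G$, applying the preceding proposition twice, once with $A = A_1$ and once with $A = A_2$, yields
\[
g \text{ is } \ind^\Gamma\text{-generic over } B \text{ (w.r.t.\ } A_1\text{)} \iff \dim(\tp(g/B)) = \dim(G) \iff g \text{ is } \ind^\Gamma\text{-generic over } B \text{ (w.r.t.\ } A_2\text{)}.
\]
Since the middle condition is intrinsic to $g$, $B$, and $G$, the two notions agree.

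There is no real obstacle here; the corollary is essentially a repackaging of the preceding proposition. The only point that requires a moment's verification is the computation $\dim(\{G(x)\}) = \dim(G)$ needed to apply Proposition \ref{rk_extension}, which follows immediately from monotonicity of $\dim$ (Corollary \ref{big_col}(2)).
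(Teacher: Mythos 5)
Your proof is correct and follows the obvious reduction intended by the paper: the corollary is stated without proof as an immediate consequence of the preceding proposition, and your argument — existence via Proposition \ref{rk_extension} combined with saturation, and independence from $A$ via the fact that $\dim$-genericity makes no reference to $A$ — is exactly the intended reasoning. The only slightly muddled point is your parenthetical about "enlarging $B$ if necessary" being harmless because "the property we want to verify is only strengthened by enlarging the base"; the cleaner observation is simply that the $\ind^\Gamma$-genericity condition w.r.t.\ $A$ is only even defined when $A\subseteq B$, so the independence claim should be read as: for any $B$ and any two finite $A_1, A_2\subseteq B$ over which $G$ is definable, the two resulting notions agree — which your equivalence chain shows directly.
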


Kim-independence was introduced and studied extensively in \cite{KR} over models in NSOP$_1$ theories. It was proved there, among other results, that $\ind^K$ is symmetric and satisfies the independence theorem over models, which was later extended in \cite{DKR} to arbitrary sets in NSOP$_1$ theories satisfying existence.
\begin{definition}\cite[Definition 2.10]{DKR}
\begin{enumerate}
 \item We say a formula
$\varphi(x,a_0)$ {\em Kim-divides} over $A$ if for some Morley sequence $\langle a_i : i<\omega\rangle$ in $\tp(a_0/A)$,
$\{ \varphi(x, a_i)|\ i<\omega\}$ is inconsistent. 
\item A formula $\varphi(x;a)$ {\em Kim-forks} over $A$ if $\varphi(x;a) \vdash \bigvee_{i < k} \psi_{i}(x;b_{i})$ where $\psi_{i}(x;b_{i})$ Kim-divides over $A$ for all $i < k$.  
\item Likewise we say a type $p(x)$ Kim-forks or Kim-divides over $A$ if it implies a formula that Kim-forks or Kim-divides over $A$, respectively.  
\item We write $a \ind^{K}_{A} b$ to denote the assertion that $\tp(a/Ab)$ does not Kim-fork over $A$.  
%
%
\end{enumerate}
\end{definition}

\begin{fact}\cite{DKR}
Suppose $T$ is NSOP$_1$ and satisfies existence. Then:\\
(1) Kim's Lemma holds for $\ind^K$, that is, if a formula $\phi(x,a_0)$ Kim-divides over $A$ then for every Morley sequence $\langle a_i : i<\omega\rangle$ in $\tp(a_0/A)$,
$\{ \varphi(x, a_i)|\ i<\omega\}$ is inconsistent\\
(2) A formula Kim-forks over $A$ if and only  if it Kim-divides over $A$\\
(3) $\ind^K$ is symmetric\\
(4) The independence theorem for Lascar types for $\ind^{K}$ holds over any set.
\end{fact}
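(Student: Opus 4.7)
The strategy is to follow the Kaplan--Ramsey programme for NSOP$_1$ theories over models and adapt it to arbitrary extension bases as in Dobrowolski--Kim--Ramsey. The four items are tightly linked: (1) is the foundation, (2) is an easy corollary, and (3) and (4) are proved in tandem, each feeding into the other.

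First I would prove Kim's Lemma (1). Because $A$ is an extension base by hypothesis, every type over $A$ admits a global non-forking extension, and hence Morley sequences in $\tp(a_0/A)$ exist and are $A$-indiscernible. Suppose toward contradiction that $\phi(x,a_0)$ Kim-divides along one Morley sequence $(a_i)$ in $\tp(a_0/A)$ but $\{\phi(x,a'_i)\}_i$ is consistent along another Morley sequence $(a'_i)$. The key step is to build, by interleaving $(a_i)$ and $(a'_i)$ along a binary tree $(c_\eta)_{\eta\in 2^{<\omega}}$, a family of parameters witnessing SOP$_1$ for $\phi$: along one branch the instances $\phi(x,c_{\eta|n})$ remain jointly consistent (drawn from the good sequence), while across sibling branches they become pairwise inconsistent (drawn from the Kim-dividing one). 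Existence is precisely what is needed to freely mix the two sequences while preserving the required indiscernibility, and NSOP$_1$ of $T$ then supplies the contradiction.

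Item (2) follows from (1) by a Ramsey/compactness argument: if $\phi(x,a) \vdash \bigvee_{i<k} \psi_i(x,b_i)$ with each $\psi_i(x,b_i)$ Kim-dividing over $A$, take a Morley sequence $(a^j,b_0^j,\dots,b_{k-1}^j)_{j<\omega}$ in $\tp(a,b_0,\dots,b_{k-1}/A)$. By (1), each sequence $\{\psi_i(x,b_i^j)\}_j$ is inconsistent, say with inconsistency witnessed at subsets of size $n_i$. A pigeonhole on the index $i<k$ then shows that any realisation of sufficiently many $\phi(x,a^j)$ would satisfy too many $\psi_i(x,b_i^j)$ for some fixed $i$; hence $\{\phi(x,a^j)\}_j$ is inconsistent, so $\phi(x,a)$ itself Kim-divides.

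For symmetry (3), one argues by a tree-building argument analogous to (1) that if $\tp(b/Aa)$ Kim-divides over $A$, so does $\tp(a/Ab)$; this uses (1) to guarantee that one may replace any given Morley sequence by a convenient one. For the independence theorem (4), given $a\equiv^L_A a'$ with $a\ind^K_A b$, $a'\ind^K_A c$, $b\ind^K_A c$, the amalgam is constructed by producing a sequence $(a^n)$ in $\tp(a/A)$ which is simultaneously indiscernible over $b$ and over $c$ (obtained via Morley sequences combined with the extension property), and then verifying through Kim's Lemma and (3) that the limit $a''$ realises $\tp(a/Ab)\cup \tp(a'/Ac)$ with $a''\ind^K_A bc$. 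The main obstacle throughout is the passage from the model case, where finitely satisfiable global extensions and their Morley sequences enjoy strong automorphism invariance, to the existence setting, where one must substitute weaker objects and manage Lascar types and tree-coding with considerably more care.
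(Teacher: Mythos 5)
This statement is quoted in the paper as a \emph{cited} fact from~\cite{DKR}; the paper does not prove it, so there is no in-paper proof to compare against --- the author simply defers to the reference. Your sketch does correctly identify the broad architecture that~\cite{KR} and~\cite{DKR} use: Kim's Lemma first, obtained by turning a discrepancy between two Morley sequences into an SOP$_1$ configuration via a tree construction; Kim-forking $=$ Kim-dividing as a pigeonhole corollary; and symmetry and the independence theorem obtained by further tree and array arguments layered on top, with existence replacing the invariant global extensions that are available over models. You are also right that the critical role of existence is to guarantee nonforking Morley sequences over arbitrary sets, and that Lascar types have to be managed carefully once one leaves the model case.

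However, what you give is only a slogan-level outline, and the parts you elide are precisely the ones that carry the weight in~\cite{DKR}. The tree construction for Kim's Lemma is not a naive interleaving of two sequences: one must build a tree of a prescribed height with coordinated indiscernibility along paths and across siblings, using the extension axiom at every node, and then extract an SOP$_1$ witness by Erd\H{o}s--Rado/Ramsey arguments --- this is where most of the technical content lives. Symmetry in~\cite{KR}/\cite{DKR} is not ``analogous to (1)'' in any quick sense; it is its own substantial theorem and relies on a chain condition (local character of Kim-dividing along Morley sequences) and a careful back-and-forth, and the independence theorem over arbitrary sets requires genuinely new work to replace finitely satisfiable global types with objects that still make sense over an extension base while controlling Lascar strong types. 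Your closing paragraph honestly acknowledges that this passage is the main obstacle. So the sketch is consistent with the strategy of the source, but none of (1)--(4) is actually established by it; for the real proofs one has to go to~\cite{DKR} (and~\cite{KR}), which is exactly what the paper does by citing them.
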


The following folklore fact follows as in \cite[Corollary 5.17]{KR}, using the fact that, for any set $C$, a sequence is Morley over $C$ iff it is a Morley sequence over $\acl(C)$.
\begin{fact}\label{kimacl}
 Suppose $T$ is an NSOP$_1$ theory satisfying existence, and let $A$, $B$, and $C$ be any sets. Then $A\ind^K_C B\iff \acl(A)\ind^K_{\acl(C)}\acl(B)$. Also, it follows from the definition of $\ind^K$ that $A\ind^K_C B$ implies $A\ind^K_C BC$.
\end{fact}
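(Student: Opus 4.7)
The second assertion $A \ind^K_C BC$ is immediate from the definition: the parameter sets $CB$ and $C(BC)$ coincide, so $\tp(A/CB) = \tp(A/CBC)$ and the two non-Kim-forking-over-$C$ conditions are literally identical. The substantive claim is $A \ind^K_C B \iff \acl(A) \ind^K_{\acl(C)} \acl(B)$, which I would prove in three steps adapting the strategy of \cite{KR} from models to arbitrary sets via existence.

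First, I would establish the observation flagged in the hint: a sequence is Morley over $C$ if and only if it is Morley over $\acl(C)$. Indiscernibility over $C$ upgrades to indiscernibility over $\acl(C)$ because the bounded action on $\acl(C)$ by automorphisms fixing $C$ preserves types of tuples, and the global invariant type generating the Morley sequence may be chosen $\acl(C)$-invariant. Combined with Kim's Lemma (valid here by NSOP$_1$ and existence), this yields that a formula Kim-divides over $C$ iff it Kim-divides over $\acl(C)$, and hence $X \ind^K_C Y \iff X \ind^K_{\acl(C)} Y$ for any $X, Y$.

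The heart of the argument is the key lemma: for any set $D$, $X \ind^K_D Y \iff \acl(XD) \ind^K_D Y$. The direction $\Leftarrow$ is monotonicity. For $\Rightarrow$, assume $X \ind^K_D Y$ and suppose for contradiction that some $\phi(x, x'; d, b) \in \tp(Xa'/DY)$ Kim-divides over $D$, where $a' \in \acl(XD)$ is witnessed by a formula $\psi(x, x')$ over $D$ with $|\psi(X, \cdot)| \le n$ for some $n < \omega$. I would show that $\phi'(x; d, b) := \exists x' (\psi(x, x') \wedge \phi(x, x'; d, b))$ already Kim-divides over $D$, contradicting $X \ind^K_D Y$. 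Fix any Morley sequence $\langle b_i \rangle_{i < \omega}$ in $\tp(b/D)$; if $\{\phi'(x; d, b_i)\}_{i < \omega}$ were consistent as witnessed by some $X'$, pigeonhole on the at-most-$n$ realisations of $\psi(X', \cdot)$ yields an infinite $I \subseteq \omega$ and a fixed $a^* \in \psi(X', \cdot)$ with $\phi(X', a^*; d, b_i)$ for all $i \in I$; reindexed, $\langle b_i \rangle_{i \in I}$ is still a Morley sequence over $D$ on which $\{\phi(x, x'; d, b_i)\}$ is consistent, contradicting Kim-dividing of $\phi$ via Kim's Lemma.

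Finally, combining the key lemma with symmetry of $\ind^K$ gives $X \ind^K_D Y \iff \acl(XD) \ind^K_D \acl(YD)$. Taking $D = \acl(C)$, noting $\acl(A\acl(C)) = \acl(AC)$, and applying the first step, we obtain $A \ind^K_C B \iff \acl(AC) \ind^K_{\acl(C)} \acl(BC)$; one further invocation of the key lemma combined with monotonicity rewrites this as $\acl(A) \ind^K_{\acl(C)} \acl(B)$, as required. The main obstacle is the pigeonhole step in the key lemma: one must verify that the subsequence extracted from the witness is genuinely a Morley sequence, which rests on the standard fact that infinite subsequences of Morley sequences remain Morley after reindexing, and that Kim's Lemma lets us pass freely between the ``for some'' and ``for every'' quantifications over Morley sequences in the definition of Kim-dividing.
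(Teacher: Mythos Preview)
Your proof is correct and is precisely the argument the paper points to: the paper gives no proof beyond citing \cite[Corollary 5.17]{KR} and flagging that Morley sequences over $C$ coincide with those over $\acl(C)$, and your three steps faithfully unpack that reference. One small correction: in Step~1 you justify the Morley-sequence equivalence via global invariant types (the \cite{KR} framework over models), whereas over arbitrary sets with existence \cite{DKR} defines Morley sequences via non-forking, so the needed justification is instead that non-forking over $C$ agrees with non-forking over $\acl(C)$ and $C$-indiscernibility upgrades to $\acl(C)$-indiscernibility---but this is cosmetic and the rest of your argument (the pigeonhole step, the subsequence-of-Morley-is-Morley fact, and the symmetry/monotonicity bookkeeping) is unaffected.
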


We will now give a description of Kim-independence in $T_\infty$ over arbitrary sets. The proof of it will be essentially the same as in the description of Kim-independence over models given in \cite[Proposition 9.37]{KR}, but the statement there requires two corrections (even when working over a model), which we now explain. If  $A\subseteq \C\models T_\infty$, put
$\langle A \rangle:=\Lin_{K(\C)}(V(A))$ and  let $A_K:=A\cap K(\C)$. By $\acl(A)_K$ we mean $(\acl(A))_K$, where $\acl$ is the model-theoretic algebraic closure.

By quantifier elimination the structure on the sort $K$ induced from $T_\infty$ is just the pure field structure, so the relation $\ind^K$ restricted to the sort $K$ coincides with forking independence $\ind^{ACF}$ in the algebraically closed field $K$, that is, with algebraic independence in the sense of field theory. As for algebraically closed $ A,B\supseteq M\models T_\infty$  the condition $A\cap B=M$ does not imply that $K(A)\ind^{ACF}_{K(M)}K(B)$, the latter is a missing condition in  \cite[Proposition 9.37]{KR}. 

Also, the following example shows that the condition $A\cap B=M$ for algebraically closed sets $A, B$ and a model $M$, does not imply  that $\langle A\rangle\cap \langle B\rangle=\langle M\rangle$ (even if $K_A=K_B=K_M$), which is also implicitly used in the proof of \cite[Proposition 9.37]{KR}, and which clearly follows from $A\ind^K _M B$ (see the proof of Proposition \ref{kim_description} below).

\begin{example}\label{ex_lin}
 Let $M=(V_0,K_0)\models {_ST_\infty}$ and choose an orthonormal pair $(e_0,e_1)$ with   $e_0,e_1\in M^\perp$. Put $A:=(\Lin_{K_0}(M,e_0,e_1),K_0)$. Clearly $A=\acl(A)$.  
 Let $t\in K(\C)\backslash K_0$ and let $t'$ be such that
 $t^2+t'^2=1$. Put $f:=te_0+t'e_1$ and  
 $B:=(\Lin_{K_0}(M,f),K_0)$. As $[f,f]=1$ and $f\in M^\perp$, we have $B=\acl(B)$. 
 Clearly $\langle A \rangle \cap \langle B\rangle=\langle B\rangle\neq M$, but
 $A\cap B=M$: any element of $A\cap B$ is of the form $ae_0+be_1+m_0=cf+m_1$ for some $a,b,c\in K_0$ and $m_0,m_1\in V(M)$. Then, as $\langle e_0,e_1,f \rangle\subseteq M^\perp$, we have 
  $$ae_0+be_1=cf=cte_0+ct'e_1$$
 so $a=ct$ and $b=ct'$. As $t,t'\notin K$, this implies that $a=b=0$, so $ae_0+be_1+m_0=m_0\in M$. Hence $A\cap B=M$.
 \end{example}


\begin{proposition}\label{kim_description}
Let
 $A,B,E\subseteq \C\models T_\infty$ be small algebraically closed sets with $E\subseteq A,B$. Then $A\ind^K_E B$ if and only if $\langle A\rangle \cap \langle B\rangle=\langle E\rangle $ and $K(A)\ind ^{ACF}_{K(E)}K(B)$.
\end{proposition}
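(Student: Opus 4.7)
The plan is to prove both directions via Kim's Lemma, applicable here because $T_\infty$ is NSOP$_1$ and satisfies existence (Proposition \ref{prop_existence}). The key tool in both directions is a $\ind^\Gamma$-Morley sequence $(B_i)_{i<\omega}$ over $E$ with $B_0 = B$: since $\ind^\Gamma$ is stronger than $\ind^K$ and, over an algebraically closed set, extends to a global invariant type (via Fact \ref{fact_gam}), such a sequence is an invariant Morley sequence, hence eligible in Kim's Lemma. Two basic ingredients will be used repeatedly: first, by quantifier elimination (Fact \ref{dcl}), the structure induced on the sort $K$ is just $\mathrm{ACF}_p$, so $\ind^K$ restricted to tuples in $K$ agrees with $\ind^{ACF}$; second, the characterization of $\ind^\Gamma$ in \cite[Section 12.2]{Gr} implies that the $\langle B_i \rangle$'s are pairwise linearly disjoint over $\langle E \rangle$, and using $E \subseteq B_i$ (by indiscernibility together with algebraic-closedness), one gets $\bigcap_i \langle B_i \rangle = \langle E \rangle$.

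For the forward direction, assume $A \ind^K_E B$. The scalar condition $K(A) \ind^{ACF}_{K(E)} K(B)$ is immediate from the first ingredient above together with monotonicity of $\ind^K$. To prove $\langle A \rangle \cap \langle B \rangle \subseteq \langle E \rangle$, suppose toward a contradiction that $v \in (\langle A \rangle \cap \langle B \rangle)\setminus \langle E \rangle$, and fix expressions $v = \sum_j \alpha_j a_j = \sum_k \beta_k b_k$ with $\bar a \subseteq V(A)$, $\bar b \subseteq V(B)$. I would consider the partial type $q(y)$ over $E\bar b$ asserting $y \in \langle \bar b \rangle$ together with the conditions from $\tp(v/E)$ (in particular $y \notin \langle \bar e \rangle$ for every finite $\bar e \subseteq V(E)$); this type is realised by $v$. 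Using the already-established scalar half of the equivalence, and the fact that the scalars $\bar\alpha,\bar\beta$ lie in $\acl(A\cup B)\cap K(\C)$ where $\ind^{ACF}$-independence of $K(A)$ and $K(B)$ over $K(E)$ controls their behaviour, one checks that $q(y)$ does not Kim-fork over $E$. Applying Kim's Lemma with the $\ind^\Gamma$-Morley sequence $(B_i)$ yields $v^*$ realizing $\bigcup_i\{y\in\langle B_i\rangle\}\cup\tp(v/E)$, so $v^* \in \bigcap_i \langle B_i \rangle = \langle E \rangle$ by the second ingredient, contradicting $v^*\equiv_E v$ and $v \notin \langle E \rangle$.

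For the backward direction, assume both conditions on the right-hand side. By Kim's Lemma it suffices to exhibit a single $\ind^\Gamma$-Morley sequence $(B_i)_{i<\omega}$ over $E$ with $B_0 = B$ for which $\bigcup_i \tp(A/B_i)$ is consistent. In addition to the intersection property above, such a sequence provides mutual orthogonality of the $\langle B_i \rangle$'s over $\langle E \rangle$ with respect to $[\,,\,]$, and $\ind^{ACF}$-independence of the $K(B_i)$'s over $K(E)$. Using quantifier elimination (Fact \ref{dcl}), I would construct $A^*$ realizing the union by specifying: (a) an algebraically closed field $K(A^*)\supseteq K(E)$ isomorphic to $K(A)$ over $K(E)$ and $\ind^{ACF}$-independent from $\bigcup_i K(B_i)$ over $K(E)$, possible by $K(A) \ind^{ACF}_{K(E)} K(B)$ and stationarity in $\mathrm{ACF}_p$; (b) a $K(A^*)$-vector space $V(A^*)$ embedded into $V(\C)$ so that $\langle A^* \rangle \cap \langle B_i \rangle = \langle E \rangle$ for every $i$, possible using $\langle A \rangle \cap \langle B \rangle = \langle E \rangle$ and the linear disjointness of the $\langle B_i \rangle$'s; (c) bilinear-form values $[v,w]$ for $v \in V(A^*)$, $w \in V(B_i)$ matching those between $V(A)$ and $V(B)$ under the type-identification, with no conflict across different $i$ thanks to mutual orthogonality of the $\langle B_i \rangle$'s over $\langle E \rangle$. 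By Fact \ref{dcl}, this data determines $\tp(A^*/B_i) = \tp(A/B)$ for each $i$.

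The principal obstacle is the joint consistency of the gluing in the backward direction: one must check that the prescribed linear, bilinear, and scalar data can be realised simultaneously inside the ambient nondegenerate structure $\C$. This reduces precisely to the two hypotheses $\langle A \rangle \cap \langle B \rangle = \langle E \rangle$ and $K(A) \ind^{ACF}_{K(E)} K(B)$, together with the geometric and arithmetic independence of the $\ind^\Gamma$-Morley sequence, which respectively eliminate the possibility of accidental linear relations and accidental algebraic relations among scalars. A subtler point, in the forward direction, is the verification that $q(y)$ does not Kim-fork over $E$; this requires tracking the scalar coefficients $\bar\alpha,\bar\beta$ through the algebraic closures of $A$ and $B$, and is exactly where the assumed (already-established) scalar independence combines with the vector-level Kim-independence to yield the conclusion.
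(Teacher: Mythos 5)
Your backward direction is essentially the paper's approach — adapting the $A^*$-gluing argument from \cite[Proposition 9.37]{KR} via quantifier elimination and the stationarity/disjointness of a $\ind^\Gamma$-Morley sequence — and the issues you flag (joint consistency of the linear, bilinear, and scalar data) are exactly the points the paper addresses, so the outline is sound.

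The forward direction contains a genuine error. You introduce a fresh variable $y$ and the partial type $q(y)$ over $E\bar b$ saying $y\in\langle\bar b\rangle$ and $y\notin\langle\bar e\rangle$ for each finite $\bar e\subseteq V(E)$, and you claim $q(y)$ does not Kim-fork over $E$. In fact $q(y)$ \emph{does} Kim-divide over $E$: putting $E_0:=\langle E\rangle\cap\langle\bar b\rangle$ (a finite-dimensional subspace spanned by some finite $\bar e_0\subseteq V(E)$), the formula $y\in\langle\bar b\rangle\wedge y\notin\langle\bar e_0\rangle$ is implied by $q$, and for any $\ind^\Gamma$-Morley sequence $(\bar d_i)$ in $\tp(\bar b/E)$ one has $\langle\bar d_i\rangle\cap\langle\bar d_j\rangle=E_0$ for $i\neq j$, so the translates are $2$-inconsistent. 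So the conclusion you want to derive from Kim's Lemma is unavailable. Moreover, even if some version of the argument were salvaged, showing that $\tp(v/E\bar b)$ Kim-forks over $E$ would not by itself contradict the hypothesis $A\ind^K_E B$: the element $v=\sum_j\alpha_j a_j$ lies only in $\langle A\rangle=\Lin_{K(\C)}(V(A))$, not in $A=\acl(A)$, since the coefficients $\bar\alpha$ need not lie in $K(A)$, and there is no direct route from $v\nind^K_E B$ to $A\nind^K_E B$. The subtler scalar-tracking you gesture at is precisely the gap.

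The paper sidesteps both problems at once: from a witness to $\langle A\rangle\cap\langle B\rangle\not\subseteq\langle E\rangle$ it extracts finite tuples $a_1,\dots,a_m\in V(A)$ and $b_1,\dots,b_k\in V(B)$ and sets $E_0:=E\cap\langle\bar b\rangle$, then considers the formula $\phi(x_1,\dots,x_m,\bar b)$ over $E_0\bar b$ asserting that $\langle x_1,\dots,x_m\rangle\cap\langle\bar b\rangle\not\subseteq E_0$. This is a formula in variables for elements of $A$ with parameters in $B$, is satisfied by $\bar a$, makes no reference to the scalars $\bar\alpha,\bar\beta$ or to $v$ itself, and Kim-divides over $E$ because for a Morley sequence $(\bar d_i)$ the spaces $\langle\bar d_i\rangle$ pairwise intersect in $E_0$ (so finitely many of the translates of $\phi$ are inconsistent, as $\langle\bar x\rangle$ is at most $m$-dimensional). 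This directly contradicts $\bar a\ind^K_E\bar b$ and hence $A\ind^K_E B$.
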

\begin{proof}
Suppose first that $A\ind^K_E B$. As already pointed out above, this implies that $K(A)\ind ^{ACF}_{K(E)}K(B)$ and it is left to show that $\langle A\rangle \cap \langle B\rangle=\langle E\rangle $. Suppose this is not the case, so there are vectors $a_1,\dots,a_m\in A$ and $b_1,\dots, b_k\in B$ such that $\langle a_1,\dots,a_m\rangle\cap \langle b_1,\dots,b_k\rangle$ is not contained in $\langle E \rangle$. Put $E_0:=E\cap \langle b_1,\dots,b_k\rangle$; then $E_0$ is a finite-dimensional vector subspace of $\langle E\rangle$. Hence there is a formula $\phi(x_1,\dots,x_m,b_1,\dots,b_k)$ over $E_0b_1\dots b_k$ expressing that $\langle x_1,\dots,x_m \rangle\cap \langle b_1,\dots, b_k \rangle$ is not contained in $E_0$. Then $\models \phi(a_1,\dots,a_m,b_1,\dots,b_k)$ and $\phi(x_1,\dots,x_m,b_1,\dots,b_k)$ Kim-divides over $E$, as for any Morley sequence $(\bar{d_i})_{i\in \omega}$ in $\tp(b_1,\dots ,b_k/E)$ and $i\neq j$ we have  $\langle \bar{d_i}\rangle\cap \langle \bar{d_j}\rangle= E_0$, so the set of formulas $\{\phi(x_1,\dots,x_m,\bar{d_i}):i\in \omega\}$ is 2-inconsistent.
This is a contradiction, hence the implication from left to right is proved.

Let us now assume that $\langle A\rangle \cap \langle B\rangle=\langle E\rangle $ and $K(A)\ind ^{ACF}_{K(E)}K(B)$.
There are only two problems with the proof of Proposition 9.37 in \cite{KR} (with $E=M$ a model).
First, as shown by Example \ref{ex_lin}, the assumptions there do not imply that $\langle A\rangle \cap \langle B \rangle=\langle M\rangle$, which is used in the construction of the structure $N$ in that proof. This is, however, assumed here. Secondly,  in the last paragraph of the proof in \cite{KR}, the map $\sigma^i:B_0\to B_i$ 
need not be elementary over $K(A')=K(A)$ on the sort $K$. However,
assuming that $K(A)\ind^{ACF}_{K(M)} K(B)$ we have that $K(B_i)\ind^{ACF}_{K(M)}K(A')$ and $K(B_0)\ind^{ACF}_{K(M)}K(A')$, so  the map $\id_{K(A')}\cup \sigma^i|_{K(B_0)}:K(A')\cup K(B_0)\to K(A')\cup K(B_i)$ is elementary by stationarity of $\tp(K(B_0)/K(M))$. Thus $\id_{K(A')}\cup \sigma^i|_{K(B_0)}$ extends to an isomorphism $\rho:\tilde{K}\to L$ onto some algebraically closed field $L$, hence, by the construction of $A'$, the map $\id_{A'}\cup \sigma^i$ extends to an isomorphism between $\Lin_{\tilde{K}}(A'\cup B_0)$ and $\Lin_{L}(A'\cup B_i)$. By quantifier elimination this isomorphism is an elementary map, so in particular $A'B_0\equiv A'B_i$, and hence $\tp(A/B)$ does not Kim-divide over $M$. 

When $E$ is an arbitrary algebraically closed set (not necessarily a model), the only difference is that the vector spaces we obtain may be finite-dimensional, which does not cause any problems, as an isomorphism of finite-dimensional vector subspaces of $\C$ is still elementary in $T_\infty$ by quantifier elimination. Hence the implication from right to left holds.
\end{proof}
By \cite[Proposition 9.5.1]{Gr}
$\acl(AC)_K$ is the field-theoretic algebraic closure of $\dcl(AC)_K$ and $\langle\acl(AC)\rangle=\langle AC\rangle$ for any sets $A$ and $C$, so by Fact \ref{kimacl} we conclude:
\begin{corollary}\label{kim_description2}
Let $A,B,C\subseteq \C\models T_\infty$ be any small sets. Then $A\ind^K_C B$ if and only if $\langle AC \rangle\cap \langle BC \rangle =\langle C \rangle$ and $\dcl(AC)_K\ind ^{ACF}_{\dcl(C)_K}\dcl(BC)_K$. 
\end{corollary}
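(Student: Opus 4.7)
The plan is to reduce the statement to Proposition \ref{kim_description} by passing $A$, $B$, and $C$ to appropriate algebraic closures via Fact \ref{kimacl}, and then to translate the resulting $\acl$-flavoured conditions back into the $\langle\cdot\rangle$- and $\dcl$-form given in the corollary.

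The first step is to establish $A\ind^K_C B \iff \acl(AC)\ind^K_{\acl(C)}\acl(BC)$. Starting from $A\ind^K_C B$, the second half of Fact \ref{kimacl} gives $A\ind^K_C BC$; since $T_\infty$ is NSOP$_1$ and satisfies existence (Proposition \ref{prop_existence}), $\ind^K$ is symmetric, so $BC\ind^K_C A$; applying the second half of Fact \ref{kimacl} once more and then symmetry again yields $AC\ind^K_C BC$. The reverse implication is just monotonicity of $\ind^K$ on both sides (which is immediate from the definition of Kim-forking, as restrictions of types that do not Kim-fork still do not Kim-fork). Applying the first half of Fact \ref{kimacl} to the triple $(AC, BC, C)$ then produces the claimed equivalence with $\acl(AC)\ind^K_{\acl(C)}\acl(BC)$.

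Next, I would invoke Proposition \ref{kim_description} for the algebraically closed sets $\acl(AC)$ and $\acl(BC)$ over $\acl(C)\subseteq \acl(AC)\cap\acl(BC)$, producing the conjunction
\[
\langle\acl(AC)\rangle \cap \langle\acl(BC)\rangle = \langle\acl(C)\rangle \quad\text{and}\quad K(\acl(AC)) \ind^{ACF}_{K(\acl(C))} K(\acl(BC)).
\]
The first conjunct collapses to $\langle AC\rangle\cap\langle BC\rangle=\langle C\rangle$ by the Granger identity $\langle\acl(X)\rangle=\langle X\rangle$ cited just before the corollary.

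For the second conjunct, \cite[Proposition 9.5.1]{Gr} identifies each $K(\acl(X))=\acl(X)_K$ with the field-theoretic algebraic closure of $\dcl(X)_K$, and since ACF-forking (i.e.\ algebraic independence) is invariant under replacing any of its three arguments by its field-theoretic algebraic closure, we obtain
\[
\acl(AC)_K\ind^{ACF}_{\acl(C)_K}\acl(BC)_K \iff \dcl(AC)_K\ind^{ACF}_{\dcl(C)_K}\dcl(BC)_K,
\]
which is the second conjunct of the corollary. The main obstacle here is purely bookkeeping — the careful chain of symmetry and monotonicity arguments needed to place $C$ on both sides of $\ind^K$ before Proposition \ref{kim_description} can be applied — rather than any genuinely new input beyond the tools already developed.
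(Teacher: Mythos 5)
Your proof is correct and follows essentially the same route as the paper's (very terse) argument: pass to algebraic closures via Fact \ref{kimacl} (together with symmetry and monotonicity of $\ind^K$), apply Proposition \ref{kim_description}, and translate back using $\langle\acl(X)\rangle=\langle X\rangle$ and the identification of $\acl(X)_K$ with the field-theoretic algebraic closure of $\dcl(X)_K$ from \cite[Proposition 9.5.1]{Gr}. The paper compresses the bookkeeping you spell out, but the underlying steps are identical.
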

 

As $\ind^K$ does not satisfy base monotonicity, it is not obvious whether in the definition of $\ind^K$-genericity over $B$ of an element $g\in G$ with $G$ definable over $A$ it is more reasonable to require that $h\cdot g\ind^K_A B,h$ (as is done in Definition \ref{def_gen})  or that $h\cdot g\ind^K_B h$, provided that $ g\ind^K_B h$. In either case, it turns out that $(V,+)$ does not have any $\ind^K$-generics over any set of parameters. Below we prove it for $\ind^K$-genericity in the sense of Definition \ref{def_gen}, and exactly the same argument works for the other sense.
\begin{proposition}
 The  $\emptyset$-definable in $T_\infty$ group $(V,+)$ does not have any $\ind^K$-generic type over any set $B$.
\end{proposition}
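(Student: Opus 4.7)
I would argue by contradiction: suppose $p(x) \in S_V(B)$ is a $\ind^K$-generic type in $(V,+)$ over some set $B$, fix a realisation $g \models p$ in $\C$, and construct $h \in V$ with $g \ind^K_B h$ but $h+g \not\ind^K_\emptyset Bh$. Since $(V,+)$ is $\emptyset$-definable, this contradicts Definition \ref{def_gen} (where the set $A$ over which $G$ is defined is $\emptyset$).

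In the easy case $g \in \langle B \rangle$, using Fact \ref{gross} and $\aleph_0$-saturation I would fix an orthonormal basis $(e_i)_i$ of $V$ with $V(B) \subseteq \Lin_K(e_1,\dots,e_n)$, pick $\lambda \in K(\C)$ with $\lambda^2$ transcendental over $\dcl(gB)_K$, and set $h := \lambda e_{n+1}$. Then $h \perp V(B)$, $[h,h]=\lambda^2$ is transcendental, and $h \notin \langle B \rangle$, so Corollary \ref{kim_description2} gives $g \ind^K_B h$. But $g \in \langle B \rangle$ forces $h+g \in \langle B,h \rangle$, while $h+g \neq 0$ (since $-g \in \langle B \rangle$ but $h \notin \langle B \rangle$), so $\langle h+g \rangle \cap \langle B,h \rangle \neq \{0\}$, violating the linear condition of Corollary \ref{kim_description2} for $h+g \ind^K_\emptyset Bh$.

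In the main case $g \notin \langle B \rangle$, I exploit the fact that the only scalar definably extractable from a single vector $v$ is $[v,v]$, and arrange $[h+g,h+g]=[h,h]$ so that the only transcendental in $\dcl(\{h+g\})_K$ already lies in $\dcl(\{B,h\})_K$. Concretely, I construct $h \in V$ satisfying: (a) $h \notin \langle B,g \rangle$; (b) $[h,b]=0$ for every $b \in V(B)$; (c) $[h,g] = -[g,g]/2$ (valid since $\mathrm{char}\,K \neq 2$); and (d) $[h,h]$ is transcendental over $\dcl(gB)_K$. For existence, extend a basis of $\langle B,g \rangle$ (drawn from $V(B) \cup \{g\}$, possible since $g \notin \langle B \rangle$) to an orthonormal basis $(e_i)_i$ of $V$ with $\langle B,g \rangle \subseteq \Lin_K(e_1,\dots,e_n)$; conditions (b)--(c) translate into a linear system on the $(e_1,\dots,e_n)$-coefficients of $h$ whose matrix has full row rank (by linear independence of the functionals $[\,\cdot\,,b_i]$ and $[\,\cdot\,,g]$), and a suitably transcendental $e_{n+1}$-coefficient then delivers (a) and (d) by saturation.

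For this $h$, Corollary \ref{kim_description2} gives $g \ind^K_B h$: (a) reduces the linear intersection to $\langle B \rangle$, while (b) and (d) make $\dcl(hB)_K$ a simple extension of $\dcl(B)_K$ by the transcendental $[h,h]$, algebraically independent from $\dcl(gB)_K$. Conversely, (c) produces the collapse $[h+g,h+g] = [h,h]+[g,g]+2[h,g]=[h,h]$, so $[h,h] \in \dcl(\{h+g\})_K \cap \dcl(\{B,h\})_K$; since $[h,h]$ is transcendental over $K_0 = \dcl(\emptyset)_K$, the field-independence condition $\dcl(\{h+g\})_K \ind^{ACF}_{K_0} \dcl(\{B,h\})_K$ of Corollary \ref{kim_description2} fails, giving $h+g \not\ind^K_\emptyset Bh$. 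The main obstacle is the construction of $h$ with (a)--(d); the same argument transfers verbatim to the alternative convention $h+g \ind^K_B h$, since by (d) the scalar $[h,h]$ remains transcendental over $\dcl(B)_K$ as well.
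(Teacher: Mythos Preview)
Your approach is essentially the same as the paper's: split into the cases $g\in\langle B\rangle$ and $g\notin\langle B\rangle$, and in the main case construct $h$ with $h\perp V(B)$, $[h,g]=-\tfrac12[g,g]$, and $[h,h]$ transcendental over $\dcl(gB)_K$, so that the collapse $[h+g,h+g]=[h,h]$ witnesses the failure of the field-independence condition in Corollary~\ref{kim_description2} for $h+g\ind^K_\emptyset Bh$.

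There is one genuine gap in your construction. You write ``fix an orthonormal basis $(e_i)_i$ of $V$ with $V(B)\subseteq\Lin_K(e_1,\dots,e_n)$'' and later ``$\langle B,g\rangle\subseteq\Lin_K(e_1,\dots,e_n)$'', and you reduce (b)--(c) to a finite linear system in these coordinates. This presupposes that $\langle B\rangle$ is finite-dimensional, but $B$ is an \emph{arbitrary} set---for instance $B$ could be a model of $T_\infty$, in which case $\langle B\rangle$ is infinite-dimensional and no such $n$ exists. The paper avoids this by first replacing $B$ by $\acl(B)=(V_0,K_0)$ and then using compactness together with Gram--Schmidt to find some $f\perp V_0$ with $[f,v]=-\tfrac12[v,v]$, afterwards adjusting by a vector orthogonal to everything in sight to set $[h,h]$ to a prescribed transcendental value. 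Your argument goes through once you make this adjustment (or simply invoke compactness to realise the partial type given by (a)--(d), since each finite fragment involves only finitely many vectors of $B$).

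A minor simplification: in the easy case $g\in\langle B\rangle$ you do not need to build a specific $h$. The paper just observes that for \emph{any} $h$ with $g\ind^K_B h$ and $h\neq -g$, the vector $h+g$ is nonzero and lies in $\langle B,h\rangle$, so the linear condition in Corollary~\ref{kim_description2} already fails.
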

\begin{proof}
As usually we consider the symmetric case, the alternating case being very similar.
 By Fact \ref{kimacl} being $\ind^K$-generic over $B$ is the same as being $\ind^K$-generic over $\acl(B)$, so we may assume that $B=\acl(B)=(V_0,K_0)$; in particular, $K_0$ is an algebraically closed field. Consider any $v\in V$ and put $(V_1,K_1)=\acl(B,v)$ and $a=[v,v]$. We will show that $v$ is not a $\ind^K$-generic in $(V,+)$ over $B$. 
  If $v\in \langle V_0\rangle$ then for any $w\neq v$ with $v\ind^K_B w$ we have that $0\neq w+v\in \langle  w+v\rangle \cap \langle V_0,w\rangle$, so $w+v\nind^K B, w$ hence $v$ is not a $\ind^K$-generic in $(V,+)$ over $B$.
  So let us assume that  $v\notin \langle V_0\rangle$. Let $t\in K(\C)\backslash K_0$ be such that 
 $K_1\ind^{ACF}_{K_0} t$.
 \begin{claim}
  We may assume there exists $w\in V$ such that $w\perp V_0$, $[w,w]=t$, $[w,v]=-\frac 1 2 a$ and $\langle V_0,v \rangle\cap \langle V_0,w \rangle =\langle V_0\rangle$. 
 \end{claim}
\bpfc
 As $v\notin \langle V_0\rangle$, by compactness and the Gram-Schmidt process we can easily find some $f\in V$ with $f\perp V_0$ and $[f,v]=-\frac 1 2 a$. Let $e_1\in V$ be orthogonal to $\langle V_1,f \rangle$ with $[e_1,e_1]=1$. Now we can find $\beta \in K(\C)$ such that $[f,f] +\beta^2=t$. Then putting $w:=f +\beta e_1$ we get $[w,w]=[f,f]+\beta^2=t$ and $[w,v]=[f,v]=-\frac 12a$. By possibly modifying $t$ we may assume that $\beta \neq 0$, so $\langle V_0,v \rangle\cap \langle V_0,w \rangle =\langle V_0\rangle$.
\epf
Let $w$ be as in the claim. Then $[w+v,w+v]=[w,w]+[v,v]+2[w,v]=t+a-a=t=[w,w]$, so  $ w+v\nind^K B,w$. On the other hand, as $w\perp V_0$, we have by \cite[Proposition 9.5.1]{Gr} that $\dcl(B,w)_K=\dcl_{ACF}(K_0,t)$. As $K_1=\acl(B,v)_K$, this gives us that $\dcl(B,v)_K\ind^{ACF}_{K_0}\dcl(B,w)_K$ by the choice of $t$. 
As we also know by the choice of $w$ that
$\langle V_0,v \rangle\cap \langle V_0,w \rangle =\langle V_0\rangle$, we conclude that $v\ind^K_B w$. Hence $v$ is not a $\ind^K$-generic in $(V,+)$ over $B$.
\end{proof}

\begin{question}
 Is there a useful notion of a generic element in a group definable in an NSOP$_1$ theory with existence?
\end{question}


\begin{thebibliography}{99}
 

\bibitem{Bou} E. Bouscaren.
{\em Model-theoretic version of Weil's theorem on pregroups}, 
The Model Theory
of Groups (A. Nesin and A. Pillay, eds.), Univ. of Notre Dame Press, pp. 177-185, 1989.

\bibitem{CH}
A. Chernikov, N. Hempel.
\newblock {\em On n-dependent groups and fields II},
\newblock arXiv:1912.023856, 2019.
 
\bibitem{CR} A. Chernikov, N. Ramsey.
{\em On model-theoretic tree properties}, 
J. Math. Log., 16(2), 2016.


\bibitem{DKR} J. Dobrowolski, B. Kim, N. Ramsey.
\newblock {\em  Independence over arbitrary sets in NSOP$_1$ theories},  arXiv:1909.08368, 2019.

\bibitem{EKP} C. Ealy, K. Krupi\'nski, A. Pillay.
\newblock {\em  Superrosy dependent groups having finitely satisfiable generics},  Annals of Pure and Applied Logic (151), 1-21, 2008.

\bibitem{ES} P. Eleftheriou and S. Starchenko.
\newblock {\em  Groups Definable in Ordered Vector Spaces over Ordered Division Rings},  The Journal of Symbolic Logic
Vol. 72, No. 4, pp. 1108-1140, 2007.

\bibitem{Go}
M. Gosson.
\newblock {\em Symplectic Geometry and Quantum Mechanics},
\newblock Operator Theory: Advances and Applications (166), Birkhäuser, 2006.


\bibitem{Gr}
N. Granger.
\newblock {\em Stability, simplicity, and the model theory of bilinear forms},
\newblock PhD thesis, University of Manchester, 1999.



\bibitem{KR}
I. Kaplan, N. Ramsey.
\newblock {\em On Kim-independence}, J. Eur. Math. Soc, accepted, 2017.


\bibitem{Kr}
K. Krupi\'nski.
\newblock {\em Some model theory of Polish structures}, Transactions of the American Mathematical Society (362), 3499-3533, 2010.

\bibitem{Mac}
A. Macintyre.
\newblock {\em On $\omega_1$-categorical theories of fields},
\newblock Fund. Math. 71, 1-25, 1971.

\bibitem{Mes}
M. Messmer.
\newblock {\em Groups and fields interpretable in separably closed fields},
\newblock 
Trans. Amer. Math. Soc. Volume 344, Number 1, 1994.

\bibitem{skl}
A. Byron and R. Sklinos.
\newblock {\em Fields definable in the free group},
\newblock Trans. Amer. Math. Soc. Ser. B 6, 297-345, 2019.

\bibitem{Pil}
A. Pillay.
\newblock {\em Definability and Definable Groups in Simple Theories},
\newblock The Journal of Symbolic Logic
Vol. 63, No. 3, pp. 788-796, 1998.

\bibitem{Pil2}
A. Pillay.
\newblock {\em On groups and fields definable in o-minimal structures},
\newblock Journal of Pure Applied Algebra 53, 239-255, 1988.

\bibitem{Poi}
B. Poizat.
\newblock {\em Stable groups},
\newblock Number 87 in Mathematical Surveys and Monographs.
American Mathematical Society, 1987.


\bibitem{van1}
L.  van den Dries.
\newblock {\em Definable groups in characteristic 0 are algebraic groups},
\newblock Abstracts
Amer. Math. Soc. 3, 142, 1982.


\bibitem{van2}
L.  van den Dries.
\newblock {\em Weil's group chunk theorem: a topological setting},
\newblock Illinois J. Math. 34,
127-139, 1990.



\bibitem{Wei}
A. Weil.
\newblock {\em  On algebraic groups of transformations},
\newblock Amer. J. Math. 77, 203-271, 1955.
 
\end{thebibliography}
\end{document}